\documentclass[11pt,a4paper]{article}
\usepackage[pdftex]{graphicx}
\usepackage{amsmath,amssymb,amsfonts,amsthm}
\numberwithin{equation}{section}
\usepackage{indentfirst}
\usepackage{enumitem} % for customizing lists
\usepackage[margin=1.3in]{geometry}
\usepackage{fancyhdr}
\usepackage{makecell}

\usepackage[colorlinks=true,linkcolor=magenta,citecolor=blue,urlcolor=cyan]{hyperref}
\usepackage{titlesec}
\usepackage{etoolbox}
\usepackage{graphicx}  % for scalebox if needed later
\usepackage{changepage}
\theoremstyle{plain}
\newtheorem{theorem}{Theorem}[section]
\newtheorem{lemma}[theorem]{Lemma}
\newtheorem{corollary}[theorem]{Corollary}

\newtheorem{definition}[theorem]{Definition}

\makeatletter
\renewcommand{\maketitle}{
	\begin{center}
		{\Large\bfseries{\@title}\par}
		\vskip 1em
		{\normalsize
			\lineskip .5em
			\begin{tabular}[t]{c}
				\@author
			\end{tabular}\par}
		\vskip 1.5em
	\end{center}
}
\makeatother
\renewenvironment{abstract}{
	\begin{adjustwidth}{1.3cm}{1.3cm}
		\noindent{\large\bfseries{A{\scriptsize BSTRACT.}}}
	}{
	\end{adjustwidth}
}

\usepackage{color}
\usepackage{xcolor}
\usepackage[normalem]{ulem} 
\usepackage{soul}

\usepackage{graphicx} % already included
\usepackage{xstring}  % for string manipulation

\usepackage{xstring}
\usepackage{graphicx} % for \scalebox

\usepackage{marvosym}
\begin{document}
	
	\title{Arithmetic Properties of Overcolored Odd Partitions}

	\author{M. P. Thejitha and S. N. Fathima}
	
	\maketitle
	
	\begin{abstract}
		Let $\bar{a}_s(n)$ denote the number of partitions of $n$, wherein each odd part is multicolored (atmost $s\ge 1$ colors) and the first appearance of parts may be overlined. In this paper, we establish new families of congruences modulo powers of $2$ satisfied by $\bar{a}_s(n)$ for infinitely many $s$. Our approach builds upon generating function manipulations, Hecke eigenform theory and results of Newman.\\
		
		\noindent {\bf \small Keywords:} Partitions, Colored partitions, Overpartitions, Congruences, Modular forms\\
		
		\noindent {\bf \small Mathematics Subject Classification (2020):} 05A17, 11P83.
	\end{abstract}
	
	\bigskip
	
	\vspace{0.5em}
	
	\section{Introduction}
	
	For complex numbers $a$ and $q$ with $|q|<1$, we adopt the following standard notation from the theory of $q$-series \cite{gasper}:
	\begin{align*}
		(a;q)_\infty:=\prod_{k=0}^{\infty}(1-aq^k).
	\end{align*}
	For convenience, we set $f_\ell^m:=(q^\ell;q^\ell)^m_\infty$, for some integers $\ell,m\ge 1$.\\
	\indent We begin by recalling that Corteel and Lovejoy \cite{corteel} in $2004$ revisited the combinatorial object known as overpartitions. An overpartition of a positive integer $n$ is a partition of $n$ where the first occurrence of each part may be overlined. For instance, $2$ has the following $4$ overpartitions:
	\begin{align*}
		(2), (\bar{2}), (1,1), (\bar{1},1).	
	\end{align*}
	We denote the number of overpartitions of $n$ by $\bar{p}(n)$, and define $\bar{p}(0)=1$. Hence, $\bar{p}(2)=4$.
	As noted in \cite{corteel}, the generating function for $\bar{p}(n)$ is given by 
	\begin{align}\label{eoverp}
		\sum_{n=0}^{\infty}\bar{p}(n)q^n=\dfrac{f_2}{f_1^2}.
	\end{align}
	Recently, there are extensive literature works that delves deeper into the area of overpartitions.\\
	\indent	Let $p_2(n)$ denote the number of colored partitions of $n$, wherein each odd part appears in two different colors. At first glance, the generating function for $p_2(n)$ is defined as
	\begin{align}\label{e2color}
		\sum_{n=0}^{\infty}p_2(n)q^n=\dfrac{1}{(q;q^2)^2_\infty (q^2;q^2)_\infty}.
	\end{align}
	It is worth noting that $p_2(n)$ arises naturally in theory of integer partitions. Indeed, using Euler's identity
	\begin{align*}
		(-q,q)_\infty= \dfrac{1}{(q;q^2)_\infty},
	\end{align*} 
	we obtain 
	\begin{align}\label{e1}
		\sum_{n=0}^{\infty}p_2(n)q^n=\dfrac{(-q,q)_\infty}{(q;q^2)_\infty (q^2;q^2)_\infty}=\dfrac{(-q,q)_\infty (q;q)_\infty}{(q;q)_\infty (q;q)_\infty}=\sum_{n=0}^{\infty}\bar{p}(n)q^n.	\end{align}
	Clearly, \eqref{e1} establishes the equivalence between $p_2(n)$ and $\bar{p}(n)$. Andrews and Mohamed El Bachraoui \cite{andrew} have systematically explored connections between  $p_2(n)$ and $\bar{p}(n)$.\\
	\indent Given a positive integer $s$, define the partition function $\bar{a}_s(n)$, which counts the number of partitions of $n$ wherein each odd part occurs in $s$ different colors and the first occurrence of parts may be overlined. The generating function of $\bar{a}_s(n)$ is
	\begin{align}\label{gf}
		\sum_{n=0}^{\infty}\bar{a}_s(n)q^n=\dfrac{f_2^{3s-2}}{f_1^{2s}f_4^{s-1}}.
	\end{align}
	The partition function  $\bar{a}_{s}(n)$ is a natural extension that arise by considering overpartition to Hirschhorn and Sellers \cite{hirsch}. For a positive integer $s$, they defined $a_s(n)$ that counts the partitions of $n$ in which odd parts is assigned one of $s$ colors. The generating function of $a_s(n)$ is given by
	\begin{align}\label{e1a}
		\sum_{n=0}^{\infty}a_s(n)q^n=\dfrac{f_2^{s-1}}{f_1^s}.\end{align}
	The objective of this paper is to establish new infinite families of congruences for $\bar{a}_{s}(n)$. The following are our main results.
	\begin{theorem}\label{t2n1}
	Let $p\ge 3$ be a prime and let $m,\alpha,\beta \ge1$ be integers with $2\nmid \alpha$. If $\bar{a}_{2^m\alpha}(p)\equiv 0\pmod 2$, then for all $n,k\ge 0$ with $p\nmid(2n+1)$, we have
	\begin{align*}
	\bar{a}_{2^m\alpha}\left(2p^{2k+1}n+p^{2k+1}\right)\equiv 0\pmod {2^{m+2}}.
	\end{align*}
	If $\bar{a}_{2\beta+1}(p)\equiv 0\pmod 2$, then for $n,k\ge 0$ with $p\nmid(2n+1)$, we have
	\begin{align*}
		\bar{a}_{2\beta+1}\left(2p^{2k+1}n+p^{2k+1}\right)\equiv 0\pmod {4}.
	\end{align*}
	\end{theorem}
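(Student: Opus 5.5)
The plan is to avoid modular forms altogether and use a direct theta-function computation. I expect it to show that, modulo $2^{m+2}$ (resp. modulo $4$), the odd-indexed coefficients $\bar a_s(2n+1)$ are supported on odd squares. This would give the theorem immediately, since $p^{2k+1}(2n+1)$ with $p\nmid(2n+1)$ has odd $p$-adic valuation and so is never a square. If this works, the hypothesis $\bar a_s(p)\equiv 0\pmod 2$ is not even needed; in fact the same computation shows that $\bar a_s(p)$ is automatically even.

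\textbf{Step 1: rewrite the generating function.} Put $\varphi(q)=\sum_{n\in\mathbb Z}q^{n^2}$ and $\psi(q)=\sum_{n\ge0}q^{n(n+1)/2}$. We have $\varphi(-q)=f_1^2/f_2$ and $\varphi(-q^2)=f_2^2/f_4$. Hence \eqref{gf} becomes
\begin{align*}
\sum_{n\ge0}\bar a_s(n)q^n=\frac{\varphi(-q^2)^{s-1}}{\varphi(-q)^{s}}.
\end{align*}
Using $\varphi(q)\varphi(-q)=\varphi(-q^2)^2$, this equals $\varphi(q)^s\,\varphi(-q^2)^{-s-1}$. The second factor is a series in $q^2$. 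Therefore the odd part of the series comes only from the odd part of $\varphi(q)^s$, multiplied by $\varphi(-q^2)^{-s-1}$.

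\textbf{Step 2: 2-dissect $\varphi(q)^s$.} Use $\varphi(q)=\varphi(q^4)+2q\psi(q^8)$ and expand binomially:
\begin{align*}
\varphi(q)^s=\sum_{j=0}^{s}\binom{s}{j}2^j q^j\psi(q^8)^j\varphi(q^4)^{s-j}.
\end{align*}
The odd part consists of the terms with $j$ odd.

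For $s=2^m\alpha$ with $\alpha$ odd, $j$ odd and $j\ge3$, we have $\nu_2\big(\binom{s}{j}\big)\ge m-\nu_2(j)=m$. So $\nu_2\big(\binom{s}{j}2^j\big)\ge m+3$. The term $j=1$ equals $2^{m+1}\alpha\,q\psi(q^8)\varphi(q^4)^{s-1}$.

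For $s$ odd, every odd $j\ge3$ contributes a multiple of $8$. The term $j=1$ is $2s\,q\psi(q^8)\varphi(q^4)^{s-1}$.

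\textbf{Step 3: reduce modulo the target power of 2.} In both cases the surviving term already carries the factor $2^{m+1}$ (resp. $2$). So it suffices to know the remaining factors modulo $2$. Since $\varphi(q)\equiv1\pmod2$, both $\varphi(q^4)^{s-1}$ and $\varphi(-q^2)^{-s-1}$ are $\equiv1\pmod 2$. This gives
\begin{align*}
\sum_{n\ge0}\bar a_s(2n+1)q^{2n+1}\equiv 2^{m+1}\alpha\sum_{k\ge0}q^{(2k+1)^2}\pmod{2^{m+2}}
\end{align*}
in the first case, using $q\psi(q^8)=\sum_{k\ge0}q^{(2k+1)^2}$. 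The analogous congruence with $2s$ modulo $4$ holds in the second case.

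It follows that $\bar a_s(N)\equiv0$ modulo $2^{m+2}$ (resp. $4$) for every odd $N$ that is not a perfect square. Taking $N=p^{2k+1}(2n+1)$ finishes the proof.

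The only delicate point is the 2-adic bookkeeping in Step 2. One must check that $\nu_2\binom{2^m\alpha}{j}\ge m$ for odd $j$, which follows from $\binom{s}{j}=\frac{s}{j}\binom{s-1}{j-1}$. One must also make sure that reducing the cofactors only modulo $2$ is legitimate, which holds because they multiply a series already divisible by $2^{m+1}$. The paper's own route through Hecke eigenforms and Newman's results would presumably recover the same conclusion. The elementary dissection above seems shorter, and it shows that the parity hypothesis is automatically satisfied.
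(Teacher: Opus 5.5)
Your proof is correct, and it takes a genuinely different and much more elementary route than the paper.

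The paper works in three stages:
\begin{enumerate}
\item Repeated $2$-dissections of $1/f_1^2$ and $1/f_1^4$ give Lemmas~\ref{lgf1} and~\ref{lgf2}, i.e.\ $\sum_{n\ge0}\bar a_s(2n+1)q^n\equiv 2^{m+1}f_1^{12}\pmod{2^{m+2}}$ (resp.\ $2f_1^{12}\pmod 4$).
\item It replaces $f_1^{12}$ by $f_1^4f_2^4$ modulo $2$.
\item It runs Newman's Hecke-type recurrence for the coefficients $c_1(n)$ of $f_1^4f_2^4$, iterating in $k$ under the assumption that $c_1\bigl(\frac{p-1}{2}\bigr)$ is even.
\end{enumerate}

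Your single dissection of $\varphi(q)^s\varphi(-q^2)^{-s-1}$ instead produces a closed form. For odd $N$, $\bar a_s(N)$ is congruent to $2^{m+1}$ (resp.\ $2$) times the indicator that $N$ is a square. This agrees with the paper's lemma, since $f_1^{12}\equiv f_4^3\equiv\sum_{k\ge0}q^{2k(k+1)}\pmod 2$ by Jacobi's identity. Your route avoids modular forms, the recurrence and the induction entirely. It also gives vanishing at every odd non-square $N$, so Theorem~\ref{t2n1} is a special case. So is Theorem~\ref{t4n1}, for instance, since $p^{2k+1}(4n+p)$ is an odd non-square when $p\nmid n$.

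Your argument also repairs a real weakness in the paper. The stated hypothesis $\bar a_s(p)\equiv 0\pmod 2$ is vacuous, because every $\bar a_s(n)$ with $n\ge 1$ is even. It also does not imply what the paper's proof actually uses, namely that $c_1\bigl(\frac{p-1}{2}\bigr)$ is even. That condition is equivalent to $\bar a_{2^m\alpha}(p)\equiv 0\pmod{2^{m+2}}$ (resp.\ $\bar a_{2\beta+1}(p)\equiv 0\pmod 4$). Your computation shows this stronger condition always holds, because a prime is never a square. So your remark that $\bar a_s(p)$ is ``automatically even'' undersells what you proved.

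The paper's Newman/Hecke template pays off elsewhere: for progressions such as $4n+3$, $8n+5$ and $8n+7$, one must work modulo higher powers of $2$, and the relevant eta-products modulo $2$ (e.g.\ $f_1^{18}$, $f_1f_2^{10}$) are no longer a single theta series. For this theorem, however, your argument is shorter, unconditional, and strictly stronger.
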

	\begin{theorem}\label{t4n1he}
	Let $k$ and $n$ be non-negative integers and let $m, \alpha,\beta\ge 1$ be integers with $2\nmid\alpha$. Let $p_1, p_2,\dots,p_{k+1}$ be primes such that $p_i\ge 3$ and $p_i\not\equiv1\pmod 4$ for each $1\le i\le k+1$. For any integer $j\not\equiv 0\pmod{p_{k+1}}$, we have
	\begin{align*}
	\bar{a}_{2^m\alpha}\left(4p_1^2p_2^2\cdots p_k^2p_{k+1}^2n+p_1^2p_2^2\cdots p_k^2p_{k+1}\left(j+p_{k+1}\right)\right)&\equiv 0\pmod {2^{m+2}}\\
	\bar{a}_{2\beta+1}\left(4p_1^2p_2^2\cdots p_k^2p_{k+1}^2n+p_1^2p_2^2\cdots p_k^2p_{k+1}\left(j+p_{k+1}\right)\right)&\equiv 0\pmod {4}.
	\end{align*}
	\end{theorem}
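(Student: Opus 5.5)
We plan to derive both congruences from an explicit description of $\bar{a}_s(n)$ modulo $2^{m+2}$ (respectively modulo $4$). This description should show that the residue is governed by whether the argument is a square (odd argument) or a sum of two squares (even argument). The arithmetic progressions in the statement are then built so that, since $p_{k+1}$ divides the argument to an odd power, neither can happen.

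\emph{Step 1 (rewriting the generating function).} Recall $\varphi(-q)=f_1^2/f_2$ and $\varphi(-q^2)=f_2^2/f_4$. Then \eqref{gf} becomes
\begin{align*}
\sum_{n\ge0}\bar{a}_s(n)q^n=\frac{\varphi(-q^2)^{s-1}}{\varphi(-q)^{s}}.
\end{align*}
We use the standard $2$-dissection $\frac{1}{\varphi(-q)}=\frac{\varphi(q^4)+2q\psi(q^8)}{\varphi(-q^2)^2}$, which gives
\begin{align*}
\sum_{n\ge0}\bar{a}_s(n)q^n=\frac{\bigl(\varphi(q^4)+2q\psi(q^8)\bigr)^s}{\varphi(-q^2)^{s+1}}.
\end{align*}
Expanding by the binomial theorem, the terms with $j$ odd give the odd part and those with $j$ even give the even part.

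\emph{Step 2 (odd arguments).} The odd part is
\begin{align*}
\sum_{j \text{ odd}}\binom{s}{j}2^jq^j\psi(q^8)^j\varphi(q^4)^{s-j}\varphi(-q^2)^{-s-1}.
\end{align*}
For $s=2^m\alpha$, the $j=1$ coefficient $2s$ has $2$-adic valuation exactly $m+1$. A valuation count using $\binom{s}{j}=\frac{s}{j}\binom{s-1}{j-1}$ shows that $2^j\binom{s}{j}\equiv0\pmod{2^{m+2}}$ for every odd $j\ge3$. Since $\varphi\equiv1\pmod 2$, we obtain
\begin{align*}
\sum_{n\ge0}\bar{a}_s(2n+1)q^n\equiv 2^{m+1}\psi(q^4)\pmod{2^{m+2}}.
\end{align*}
Because $\psi(q^4)=\sum_k q^{2k(k+1)}$, this says that for odd $N$, $\bar{a}_s(N)\equiv 2^{m+1}$ if $N$ is a square and $\bar{a}_s(N)\equiv0$ otherwise, modulo $2^{m+2}$. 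For $s=2\beta+1$ the same argument works modulo $4$, since $2s\equiv2\pmod4$ and $8\mid 2^j\binom{s}{j}$ for $j\ge3$.

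\emph{Step 3 (even arguments).} For the even part we substitute $q\to q^{1/2}$ and again reduce modulo $2^{m+2}$ (resp.\ $4$). This is the step we expect to be the main obstacle. The target is a congruence of the shape
\begin{align*}
\sum_{n}\bar{a}_s(2n)q^n\equiv 1+2^{m+1}\cdot(\text{a series in } \varphi(q)^2 \text{ or } \psi(q^2)^2\text{-type terms})\pmod{2^{m+2}}.
\end{align*}
To reach it we will use $\varphi(-q)^{2^m}\equiv\varphi(-q^2)^{2^{m-1}}$-type reductions modulo $2^{m+1}$, together with $\varphi(q)^2\equiv\varphi(q^2)^2+4q\psi(q^4)^2$, to push all the corrections into the coefficient $2^{m+1}$ times a series modulo $2$. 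We expect the surviving series to be, modulo $2$, a theta series whose $n$-th coefficient is nonzero only when $n$ (or $2n$) is a sum of two squares, or twice a square, or is supported on squares. The key arithmetic fact we then use is that an integer divisible by a prime $p\equiv3\pmod4$ to an odd power is not a sum of two squares. This is exactly where the hypothesis $p_i\not\equiv1\pmod4$ should enter.

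\emph{Step 4 (conclusion).} Set $P=p_1\cdots p_k$ and $p=p_{k+1}$. The argument in the statement is $N=P^2p(4pn+j+p)$. Since $p\nmid j$, we have $p\nmid (4pn+j+p)$, so $p$ divides $N$ to an odd power. Hence $N$ is not a square (this settles odd $N$ via Step 2). Since $p\equiv3\pmod4$, $N$ and $N/2$ are also not sums of two squares (this settles even $N$ via Step 3). Both displayed congruences follow. Theorem~\ref{t2n1} follows from the same description of the odd part. There $2p^{2k+1}n+p^{2k+1}=p^{2k+1}(2n+1)$ with $p\nmid 2n+1$, which is odd and not a square, so Step 2 applies directly. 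The hypothesis $\bar a_s(p)\equiv0\pmod2$ is consistent with this, but it is not actually needed in this approach.
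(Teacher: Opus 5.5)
Your Step 2 is correct. Step 3, however, cannot be completed: for even arguments (that is, $j$ odd) and $m\ge 2$ the first congruence is false, so no target of the kind you describe exists. To see this, carry your Step 1 one step further. Put $T(q)=\sum_{k\ge1}q^{k^2}$ and $s=2^m\alpha$. The valuation count from Step 2, now also keeping the term $\binom{s}{2}4T^2=2s(s-1)T^2$, which sits exactly at level $2^{m+1}$, gives
\begin{align*}
\varphi(q)^s\equiv 1+2^{m+1}\bigl(T(q)+T(q^2)\bigr),\qquad
\varphi(-q^2)^{-s}\equiv 1+2^{m+1}\bigl(T(q^2)+T(q^4)\bigr)\pmod{2^{m+2}}.
\end{align*}
Hence
\begin{align*}
\sum_{n\ge0}\bar{a}_s(n)q^n=\frac{\varphi(q)^s}{\varphi(-q^2)^{s}}\cdot\frac{1}{\varphi(-q^2)}\equiv\frac{1}{\varphi(-q^2)}+2^{m+1}\bigl(T(q)+T(q^4)\bigr)\pmod{2^{m+2}}.
\end{align*}
The even-exponent part of $T(q)$ is exactly $T(q^4)$, so the corrections cancel on even exponents, and
\begin{align*}
\sum_{n\ge0}\bar{a}_{2^m\alpha}(2n)q^n\equiv\frac{1}{\varphi(-q)}=\sum_{n\ge0}\bar{p}(n)q^n\pmod{2^{m+2}}.
\end{align*}

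Two consequences follow. First, the shape $1+2^{m+1}(\cdots)$ you aim for is already wrong at $q^1$, since $\bar{a}_2(2)=10$. Second, every even $N$ in the progression satisfies $\bar{a}_{2^m\alpha}(N)\equiv\bar{p}(N/2)\pmod{2^{m+2}}$, and this cannot vanish once $2^{m+2}>\bar{p}(N/2)$. Concretely, take $m=2$, $\alpha=1$, $k=n=0$, $p_1=3$, $j=1$. Then $N=3(1+3)=12$ and $\bar{a}_4(12)=162984\equiv 8\pmod{16}$, in line with $\bar{p}(6)=40$. The obstruction is not about sums of two squares. Modulo $16$, the term $-8T(-q)^3$ of $1/(1+2T(-q))$ detects $n=k^2+2l^2$ (e.g.\ $6=2^2+2\cdot1^2$), so no condition on $p$ modulo $4$ can rescue Step 3. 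For $m=1$ the even case does hold, because $\bar{p}(n)\bmod 8$ only sees squares and twice squares.

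What is true, and all the paper's proof actually establishes, is the case of odd arguments. The paper's route is as follows. It writes $\bar{a}_{2^m\alpha}(4n+1)\equiv 2^{m+1}[q^n]f_1^6\pmod{2^{m+2}}$. It then uses that $\eta^6(4z)$ is a Hecke eigenform with $\lambda(p)=0$ for $p\equiv3\pmod4$, which kills coefficients at indices exactly divisible by $p$. Finally it peels off $p_1^2\cdots p_k^2$ via $b(p^2n)=p^2b(n)$. But its substitution $j=r(1-p^2)$ always has $8\mid j$. The paper remarks that $r(1-p^2)$ covers all classes prime to $p$; that is true modulo $p$, but the argument depends on $j$ modulo $4p$, so the proof only reaches $j\equiv0\pmod4$.

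On odd arguments your Step 2 is a complete and more elementary proof. It shows $\bar{a}_{2^m\alpha}(N)\equiv 2^{m+1}$ or $0\pmod{2^{m+2}}$ according as $N$ is or is not a square. This needs neither Hecke theory nor the hypothesis $p_i\equiv3\pmod4$: any odd prime dividing $N$ to an odd power suffices. It also covers $j\equiv2\pmod4$. The second congruence holds for every $j$. Since $\varphi(q)^2\equiv\varphi(-q^2)^2\equiv1\pmod4$, one has $\sum_n\bar{a}_{2\beta+1}(n)q^n\equiv\varphi(q)\pmod4$, so $\bar{a}_{2\beta+1}(N)\equiv0\pmod4$ for every non-square $N$.
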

	The following corollary is obtained by setting $p_1=p_2=\dots=p_{k+1}=p$ in the above theorem.
	\begin{corollary}
	Let $k$ and $n$ be non-negative integers and let $m, \alpha,\beta\ge 1$ be integers with $2\nmid\alpha$. Let $p\ge 3$ be a prime with $p\equiv 3\pmod 4$. For any integer $j\not\equiv0\pmod p$, we have
	\begin{align*}
	\bar{a}_{2^m\alpha}\left(4p^{2k+2}n+p^{2k+1}j+p^{2k+2}\right)&\equiv 0\pmod {2^{m+2}}\\
	\bar{a}_{2\beta+1}\left(4p^{2k+2}n+p^{2k+1}j+p^{2k+2}\right)&\equiv 0\pmod {4}.
	\end{align*}
	\end{corollary}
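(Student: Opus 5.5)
The corollary follows from Theorem~\ref{t4n1he} by specializing $p_1=p_2=\dots=p_{k+1}=p$. The plan is to check that the hypotheses are met and that the arithmetic progression collapses to the one stated.

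First, the hypotheses. Theorem~\ref{t4n1he} requires primes $p_i\ge 3$ with $p_i\not\equiv 1\pmod 4$. An odd prime $p$ with $p\equiv 3\pmod 4$ satisfies this, so the choice $p_i=p$ for every $1\le i\le k+1$ is admissible. The condition $j\not\equiv 0\pmod{p_{k+1}}$ becomes exactly $j\not\equiv 0\pmod p$. The parameters $m,\alpha,\beta,k,n$ carry over unchanged, including the condition $2\nmid\alpha$.

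Next, the arguments. With all $p_i=p$ we have $p_1^2\cdots p_k^2p_{k+1}^2=p^{2k+2}$, so the modulus $4p_1^2\cdots p_{k+1}^2n$ becomes $4p^{2k+2}n$. The shift $p_1^2\cdots p_k^2p_{k+1}(j+p_{k+1})$ becomes
\begin{align*}
p^{2k}\cdot p\,(j+p)=p^{2k+1}j+p^{2k+2}.
\end{align*}
Substituting into both congruences of Theorem~\ref{t4n1he} gives precisely
\begin{align*}
\bar{a}_{2^m\alpha}\left(4p^{2k+2}n+p^{2k+1}j+p^{2k+2}\right)\equiv 0\pmod{2^{m+2}}
\end{align*}
and the analogous congruence modulo $4$ for $\bar{a}_{2\beta+1}$.

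There is no genuine obstacle here. The only point needing a moment's care is the bookkeeping of exponents: the first $k$ primes contribute $p^{2k}$ and the last contributes a single factor $p$ outside the bracket $(j+p)$. All the real content lies in Theorem~\ref{t4n1he} itself.
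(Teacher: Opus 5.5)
Your specialization $p_1=\cdots=p_{k+1}=p$ is exactly how the paper obtains this corollary, and your bookkeeping $p^{2k}\cdot p(j+p)=p^{2k+1}j+p^{2k+2}$ is correct. The problem is that the theorem you are specializing is overstated, and the first congruence is false for odd $j$ once $m\ge 2$. Take $m=2$, $\alpha=1$, $p=3$, $k=n=0$, $j=1$. The argument is $3+9=12$, and expanding $\sum_{n\ge0}\bar{a}_4(n)q^n=f_2^{10}/(f_1^{8}f_4^{3})$ gives $\bar{a}_4(12)=162984\equiv 8\pmod{16}$. This is not an accident. Write \eqref{gf} as $(f_2^2/f_4)^{s-1}(f_1^2/f_2)^{-s}$ with $f_1^2/f_2=1+2\sum_{k\ge1}(-1)^kq^{k^2}$, and expand binomially modulo $2^{m+2}$. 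This gives $\bar{a}_{2^m\alpha}(2n)\equiv\bar{p}(n)\pmod{2^{m+2}}$. So even arguments are governed by overpartitions (here $\bar{p}(6)=40$), not by $\eta^6(4z)$.

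The defect is in the proof of Theorem \ref{t4n1he}. Its only input for $\bar{a}_{2^m\alpha}$ is \eqref{e4n1}, which gives $\bar{a}_{2^m\alpha}(N)\equiv 2^{m+1}b(N)\pmod{2^{m+2}}$ (with $\eta^6(4z)=\sum b(n)q^n$) only for $N\equiv1\pmod 4$. Here, however, $N=4p^{2k+2}n+p^{2k+1}j+p^{2k+2}\equiv 3j+1\pmod 4$. Accordingly, the Hecke argument only produces $j=r(1-p^2)$ with $p\nmid r$, and all such $j$ are divisible by $8$. The step ``as $r$ runs over residues prime to $p$, so does $r(1-p^2)$'' controls $j$ only modulo $p$, while the progression $4p^2n+pj+p^2$ depends on $j$ modulo $4p$. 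So your reduction legitimately yields the corollary for $j\equiv0\pmod4$. The case $j\equiv2\pmod4$ can be added, because the argument is then $\equiv3\pmod4$ and \eqref{e4n}, \eqref{e4n3o} give the divisibility directly. For odd $j$, the $\bar{a}_{2^m\alpha}$ statement must be dropped when $m\ge2$. For $m=1$, and for the odd-$s$ congruence modulo $4$, it happens to survive, but proving that needs the same kind of expansion, not Theorem \ref{t4n1he}.
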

	\begin{theorem}\label{t4n1t1}
	Let $k$ and $n$ be non-negative integers and let $m, \alpha,\beta\ge 1$ be integers with $2\nmid\alpha$. Let $p\ge 3$ be a prime such that $p\equiv 3\pmod 4$. Let $r$ be a non-negative integer. If $r$ is such that $p$ divides $4r+3$, then we have
	\begin{align*}
		\bar{a}_{2^m\alpha}(4p^{k+1}+4pr+3p)&\equiv p^2 \bar{a}_{2^m\alpha}\left(4p^{k-1}n+\frac{4r+3}{p}\right)\pmod{2^{m+2}}\\
		\bar{a}_{2\beta+1}(4p^{k+1}+4pr+3p)&\equiv p^2 \bar{a}_{2\beta+1}\left(4p^{k-1}n+\frac{4r+3}{p}\right)\pmod{4}.
	\end{align*}
	\end{theorem}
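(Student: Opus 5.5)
The plan is to compute the full generating function modulo $2^{m+2}$ (respectively modulo $4$) explicitly, show that its odd-indexed coefficients are supported on odd squares, and then read off the stated relation. I am reading the left-hand argument as $4p^{k+1}n+4pr+3p$, so that it equals $pN$ with $N=4p^{k}n+4r+3$. Since $p\mid N$, the right-hand argument is $N/p=4p^{k-1}n+\frac{4r+3}{p}$.

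\textbf{Step 1 (theta-function form).} Using $\varphi(-q)=f_1^2/f_2$ and $\varphi(-q^2)=f_2^2/f_4$, rewrite \eqref{gf} as
\begin{align*}
\sum_{n\ge0}\bar a_s(n)q^n=\frac{f_2^{3s-2}}{f_1^{2s}f_4^{s-1}}=\varphi(-q)^{-s}\,\varphi(-q^2)^{s-1},
\end{align*}
where $\varphi(-q)=1+2X$ with $X=\sum_{n\ge1}(-1)^nq^{n^2}$.

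\textbf{Step 2 (binomial expansion modulo $2^{m+2}$).} Expand $(1+2X)^{-s}$ binomially. For $s=2^m\alpha$ with $\alpha$ odd, the term $\binom{-s}{j}2^jX^j$ has $2$-adic valuation at least $m+1$ for every $j\ge1$, and at least $m+2$ for $j\ge3$. The $j=2$ term is $2^{m+1}\cdot(\text{odd})\cdot X^2$. Every monomial of $X^2$ has exponent $a^2+b^2$, and those with $a\ne b$ come in pairs $(a,b),(b,a)$, which contributes an extra factor $2$. So modulo $2^{m+2}$ this term only contributes monomials $q^{2a^2}$, which have even exponents. Hence
\begin{align*}
(1+2X)^{-s}\equiv 1+2^{m+1}X+E(q)\pmod{2^{m+2}},
\end{align*}
where $E(q)$ involves only even powers of $q$. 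The other factor $\varphi(-q^2)^{s-1}$ involves only even powers. Moreover, because $2^{m+1}X$ is multiplied by it, we only need that factor modulo $2$, where it is $\equiv1$. Extracting the odd-exponent part therefore gives
\begin{align*}
\sum_{n\ \mathrm{odd}}\bar a_{2^m\alpha}(n)q^n\equiv 2^{m+1}\sum_{t\ge0}q^{(2t+1)^2}\pmod{2^{m+2}}.
\end{align*}
For $s=2\beta+1$ the same argument works modulo $4$: all terms with $j\ge2$ are divisible by $4$, so the odd part is $\equiv 2\sum_{t\ge0}q^{(2t+1)^2}\pmod 4$. This coefficient bookkeeping, especially the $j=2$ term, is the only delicate step and the one I expect to need the most care.

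\textbf{Step 3 (conclusion).} Both arguments are odd. Indeed $pN$ is a product of odd numbers. For $N/p$, the condition $p\equiv3\pmod4$ gives $4r+3\equiv3\equiv p\pmod 4$, so $\frac{4r+3}{p}\equiv1\pmod4$ and hence $N/p$ is odd. By Step 2, for $s=2^m\alpha$ we get $\bar a_s(pN)\equiv 2^{m+1}\cdot[pN\text{ is a square}]$, and similarly for $N/p$. Since $pN=p^2\cdot (N/p)$, the number $pN$ is a square if and only if $N/p$ is. Therefore $\bar a_s(pN)\equiv\bar a_s(N/p)\pmod{2^{m+2}}$. Because $p^2$ is odd, $p^2\cdot 2^{m+1}\equiv 2^{m+1}\pmod{2^{m+2}}$, which yields the stated factor $p^2$. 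The case $s=2\beta+1$ is identical modulo $4$.
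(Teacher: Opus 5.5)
Your proof is correct, but it takes a different route from the paper. The paper passes through the eta-quotient $\eta^6(4z)=\sum b(n)q^n$. The dissection result \eqref{e4n1} gives $\bar a_{2^m\alpha}(4n+1)\equiv 2^{m+1}b(4n+1)\pmod{2^{m+2}}$. Since $\eta^6(4z)$ is a Hecke eigenform with $\lambda(p)=b(p)=0$ for $p\equiv 3\pmod 4$, one gets the exact identity $b(pn)=p^2b(n/p)$. Substituting $n\mapsto 4(p^kn+r)+3$ gives the theorem, and this is where the factor $p^2$ comes from.

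You bypass both the repeated $2$-dissections and Hecke theory. Writing the generating function as $\varphi(-q)^{-s}\varphi(-q^2)^{s-1}$ and expanding $(1+2X)^{-s}$, you obtain an explicit rule: for odd $N$, $\bar a_{2^m\alpha}(N)\equiv 2^{m+1}$ or $0\pmod{2^{m+2}}$ according as $N$ is a square or not, with the analogue modulo $4$ for $s=2\beta+1$. This agrees with \eqref{e2n1}, because $2^{m+1}f_1^{12}$ depends only on $f_1^{12}\equiv f_4^3\pmod 2$.

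The paper's identity is exact before reduction, so it carries more information than the congruence needs. Your rule instead shows that, modulo $2^{m+2}$, the factor $p^2$ is cosmetic. It also shows that $p\equiv 3\pmod 4$ is not really needed: oddness of $N/p$ is automatic, and for $p\equiv 1\pmod 4$ both arguments are $\equiv 3\pmod 4$, hence non-squares. The same rule yields Theorems \ref{t2n1}, \ref{t4n1} and Corollary \ref{c4n12} by a one-line $p$-adic valuation check.

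Two small things to tidy:
\begin{itemize}
\item Justify the valuation bound in Step 2, e.g.\ via $j\binom{-s}{j}=-s\binom{-s-1}{j-1}$, which gives $v_2\bigl(\binom{-s}{j}2^j\bigr)\ge m+j-v_2(j)$.
\item Note that ``$p\mid N$'' tacitly assumes $k\ge 1$. For $k=0$ the right-hand argument is integral only when $p\mid n$. With the convention that $\bar a$ vanishes at non-integers, your square criterion still covers that case, since then $v_p(pN)=1$.
\end{itemize}
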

	\begin{corollary}\label{c4n12}
		Let $k$ be a non-negative integer and let $m, \alpha,\beta\ge 1$ be integers with $2\nmid\alpha$. Let $p\ge 3$ be a prime such that $p\equiv 3\pmod 4$. For all $n\ge 0$, we have
		\begin{align*}
			\bar{a}_{2^m\alpha}(4p^{2k}n+p^{2k})&\equiv (p^2)^k\bar{a}_{2^m\alpha}(4n+1)\pmod{2^{m+2}}\\
			\bar{a}_{2\beta+1}(4p^{2k}n+p^{2k})&\equiv (p^2)^k\bar{a}_{2\beta+1}(4n+1)\pmod{4}.
		\end{align*}
	\end{corollary}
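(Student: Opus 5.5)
The corollary is Theorem \ref{t4n1t1} applied $k$ times along the tower of powers $p^{2}, p^{4}, \dots, p^{2k}$. We argue by induction on $k$. The case $k=0$ is trivial, since both sides equal $\bar{a}_s(4n+1)$. Throughout we write $s$ for either $2^m\alpha$ (modulus $2^{m+2}$) or $2\beta+1$ (modulus $4$); the two arguments are identical.

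For the inductive step we specialise Theorem \ref{t4n1t1}. We read its left-hand argument as $4p^{k+1}n+4pr+3p$, since the $n$ is evidently missing in the display. Because $p\equiv 3\pmod 4$, we have $p^2\equiv 1\pmod 4$, so $p^2-3\equiv 2\pmod 4$. We therefore cannot take $4r+3=p$ directly; instead we choose $r$ so that $4pr+3p=p^{2j}$. This means $4r+3=p^{2j-1}$, and since $p^{2j-1}\equiv 3\pmod 4$, the integer $r=(p^{2j-1}-3)/4$ is non-negative. Moreover $p\mid 4r+3$. Taking the exponent $2j-1$ in place of $k$, the theorem gives
\begin{align*}
\bar{a}_s\left(4p^{2j}n+p^{2j}\right)\equiv p^2\,\bar{a}_s\left(4p^{2j-2}n+p^{2j-2}\right),
\end{align*}
because $(4r+3)/p=p^{2j-2}$.

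Chaining this relation for $j=k,k-1,\dots,1$ reduces the argument step by step down to $4n+1$. Each step contributes one factor $p^2$, so the total factor is $(p^2)^k$, and the congruence holds modulo $2^{m+2}$ (respectively modulo $4$). Formally, the induction hypothesis at $k-1$ together with the displayed relation at $j=k$ yields the claim at $k$.

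The only delicate point is confirming that the hypotheses of Theorem \ref{t4n1t1} are met with the exponents used. We need $k\ge 1$ in the theorem so that the exponent $p^{k-1}$ appearing in it is meaningful; with exponent $2j-1\ge 1$ this holds. We also need the congruence $p^{2j-1}\equiv 3 \pmod 4$, which makes $r$ an integer, and this is exactly where the hypothesis $p\equiv 3\pmod 4$ is used.
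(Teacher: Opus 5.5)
Your proposal is correct and follows the paper's own argument: choosing $4r+3=p^{2j-1}$ in Theorem~\ref{t4n1t1} with exponent $2j-1$ gives $\bar{a}_s(4p^{2j}n+p^{2j})\equiv p^2\,\bar{a}_s(4p^{2j-2}n+p^{2j-2})$, and chaining down to $4n+1$ yields the factor $(p^2)^k$. (The paper's proof writes $(-p^2)^k$; this slip is harmless because $\bar{a}_s(4n+1)\equiv 0 \pmod{2^{m+1}}$, resp.\ is even.) One wording fix: your computation $p^2-3\equiv 2\pmod 4$ rules out $4r+3=p^2$, not $4r+3=p$, and your own $j=1$ step in fact uses $4r+3=p$.
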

	\begin{theorem}\label{t4n1}
	Let $p\ge 5$ be a prime with $p\equiv 3\pmod 4$ and let $m, \alpha,\beta\ge 1$ be integers with $2\nmid\alpha$. If $p\nmid n$, then for $n,k\ge 0$, we have
	\begin{align*}
	\bar{a}_{2^m\alpha}\left(4p^{2k+1}n+p^{2k+2}\right)&\equiv 0\pmod {2^{m+2}}\\
	\bar{a}_{2\beta+1}\left(4p^{2k+1}n+p^{2k+2}\right)&\equiv 0\pmod {4}.
	\end{align*}
	\end{theorem}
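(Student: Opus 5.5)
We would not go back to the generating function \eqref{gf} directly. Instead we would reduce to the case $k=0$ using Corollary~\ref{c4n12}, and then settle that case with the same $2$-adic reduction and Hecke-eigenform input that underlie Theorems~\ref{t4n1t1} and~\ref{t4n1he}.

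\textbf{Step 1 (reduction to $k=0$).} Write
\[
4p^{2k+1}n+p^{2k+2}=p^{2k}\bigl(4pn+p^2\bigr)=p^{2k}(4N+1), \qquad N=pn+\tfrac{p^2-1}{4}.
\]
Here $N$ is an integer because $p$ is odd. Corollary~\ref{c4n12}, applied at $N$, gives
\[
\bar a_{s}\bigl(4p^{2k+1}n+p^{2k+2}\bigr)\equiv (p^2)^k\,\bar a_s\bigl(4pn+p^2\bigr),
\]
modulo $2^{m+2}$ for $s=2^m\alpha$ and modulo $4$ for $s=2\beta+1$. Since $p^2$ is odd, it suffices to show that $\bar a_s(4pn+p^2)$ vanishes modulo $2^{m+2}$ (respectively $4$) whenever $p\nmid n$.

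\textbf{Step 2 (the mechanism behind the vanishing).} Since $\varphi(-q)=f_1^2/f_2$ and $\varphi(-q^2)=f_2^2/f_4$, the generating function \eqref{gf} is
\[
\varphi(-q^2)^{s-1}\,\varphi(-q)^{-s}.
\]
Write $\varphi(-q)=1+2A$ with $A=\sum_{j\ge1}(-1)^jq^{j^2}$. For $s=2^m\alpha$, expanding $(1+2A)^{2^m}$ modulo $2^{m+2}$ gives
\[
(1+2A)^{2^m}\equiv 1+2^{m+1}\bigl(A-A^2\bigr)\pmod{2^{m+2}}.
\]
Together with $A^2\equiv A(q^2)\pmod 2$, this shows that on the progression $4N+1$ one gets
\[
\bar a_s(4N+1)\equiv 2^{m+1}\,b(4N+1)\pmod{2^{m+2}},
\]
where $b$ is the coefficient sequence of a theta-type series; the odd case behaves the same way modulo $4$. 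This is the congruence from which Theorems~\ref{t4n1t1} and~\ref{t4n1he} were derived. The coefficients $b(M)$ for $M\equiv1\pmod 4$ count, modulo $2$, representations by a binary quadratic form of sum-of-two-squares type; equivalently, the underlying weight-one form is a Hecke eigenform. The key local fact at a prime $p\equiv 3\pmod 4$ is
\[
b(pM)\equiv 0 \quad\text{whenever } p\nmid M,
\]
since $p$ is inert in $\mathbb{Z}[i]$ and the eigenvalue satisfies $\lambda(p)=0$. In the notation of Theorem~\ref{t4n1t1}: when $p\mid 4r+3$ one gets the recursion stated there, and when $p\nmid 4r+3$ the coefficient is $0$.

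\textbf{Step 3 (apply with $p\nmid n$).} We have $4pn+p^2=p(4n+p)$ with $p\nmid 4n+p$, because $p\nmid n$. Equivalently, $4pn+p^2=4pr+3p$ with $r=n+\tfrac{p-3}{4}$, and then $4r+3=4n+p$ is prime to $p$. So $p$ divides the argument exactly once, the vanishing from Step 2 applies, and $\bar a_s(4pn+p^2)\equiv0$ modulo $2^{m+2}$ (respectively $4$). Step 1 then gives the result for all $k$.

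\textbf{Main obstacle.} The hardest step is making Step 2 precise: pinning down the exact series $b$ left after reducing modulo $2^{m+2}$, including the contribution of $\varphi(-q^2)^{s-1}$, and checking that its restriction to exponents $\equiv1\pmod4$ really has $\lambda(p)=0$ for $p\equiv3\pmod 4$. I would first carry this out for $s=2^m$ and $s=1$, then absorb the odd factor $\alpha$ using $\varphi(-q)^{-2}\equiv\varphi(-q^2)^{-1}\pmod 4$-type reductions.
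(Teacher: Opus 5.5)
Your proof is correct and is essentially the paper's argument. Newman's identity $c_2\bigl(pn+\tfrac{p^2-1}{4}\bigr)=p^2c_2(n/p)$ for $f_1^6=\sum c_2(n)q^n$, which the paper uses, is exactly your Hecke relation $b(pM)=p^2b(M/p)$ for $\eta^6(4z)$ with $M=4n+p$ (since $c_2(n)=b(4n+1)$), and the paper's induction on $k$ is the iterated relation $b(p^2M)=p^2b(M)$ that you import through Corollary~\ref{c4n12}.

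What you call the main obstacle is already supplied by \eqref{e4n1}, \eqref{e4no} and the eigenform property used for Theorem~\ref{t4n1he}. Drop the suggested reduction $\varphi(-q)^{-2}\equiv\varphi(-q^2)^{-1}\pmod 4$: it is false, because the left side is $\equiv 1\pmod 4$ while the right side is not. The odd factor $\alpha$ is absorbed simply by $(1+2^{m+1}X)^{-\alpha}\equiv 1+2^{m+1}X\pmod{2^{m+2}}$.
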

	\begin{theorem}\label{t4n2}
		Let $p\ge 3$ be a prime and let $m,\alpha,\beta \ge1$ be integers. If $\bar{a}_{2\alpha}(2p)\equiv 0\pmod 2$, then for all $n,k\ge 0$ with $p\nmid(2n+1)$, we have
		\begin{align*}
			\bar{a}_{2\alpha}\left(4p^{2k+1}n+2p^{2k+1}\right)\equiv 0\pmod {4}.
		\end{align*}
		If $\bar{a}_{2\beta+1}(2p)\equiv 0\pmod 2$, then for $n,k\ge 0$ with $p\nmid(2n+1)$, we have
		\begin{align*}
			\bar{a}_{2\beta+1}\left(4p^{2k+1}n+2p^{2k+1}\right)\equiv 0\pmod {8}.
		\end{align*}
	\end{theorem}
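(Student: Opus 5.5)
The plan is to replace the generating function \eqref{gf} by a theta-function expression, reduce it modulo $4$ or $8$, and read off the coefficients on the progression $2p^{2k+1}(2n+1)$.

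\textbf{Rewriting \eqref{gf}.} Write $\varphi(q)=\sum_{n\in\mathbb Z}q^{n^2}$ and $\psi(q)=\sum_{n\ge0}q^{n(n+1)/2}$. Since $\varphi(-q)=f_1^2/f_2$ and $\varphi(-q^2)=f_2^2/f_4$, the generating function becomes
\begin{align*}
\sum_{n\ge0}\bar a_s(n)q^n=\frac{\varphi(-q^2)^{s-1}}{\varphi(-q)^{s}}=\frac{\varphi(q)^s}{\varphi(-q^2)^{s+1}},
\end{align*}
using $\varphi(q)\varphi(-q)=\varphi(-q^2)^2$. Next insert the $2$-dissection $\varphi(q)=\varphi(q^4)+2q\psi(q^8)$ and expand binomially. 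Terms with $i\ge 4$ factors of $2q\psi(q^8)$ are divisible by $16$, so extracting even powers and replacing $q^2$ by $q$ gives
\begin{align*}
\sum_{n\ge0}\bar a_s(2n)q^n\equiv \frac{\varphi(q^2)^s+4\binom{s}{2}q\,\psi(q^4)^2\varphi(q^2)^{s-2}}{\varphi(-q)^{s+1}}\pmod{16}.
\end{align*}
The tools used throughout are $\varphi(\pm q)=1+2Y_\pm$ with $Y_\pm=\sum_{n\ge1}(\pm1)^nq^{n^2}$. Hence $\varphi(\pm q)^2\equiv1\pmod 4$, $\varphi(\pm q)^4\equiv1\pmod 8$, and $\varphi(\pm q)^2\equiv 1+4(Y_\pm+Y_\pm^2)\equiv 1+4(Y_\pm+Y_\pm(q^2))\pmod 8$.

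\textbf{Case $s=2\alpha$, modulus $4$.} Here the $\binom{s}{2}$ term vanishes modulo $4$, and $\varphi(q^2)^{2\alpha}\equiv1$. Also $\varphi(-q)^{-(2\alpha+1)}\equiv\varphi(-q)^{-1}\equiv\varphi(-q)\pmod 4$. Therefore
\begin{align*}
\sum_{n\ge0}\bar a_{2\alpha}(2n)q^n\equiv 1+2\sum_{n\ge1}(-1)^nq^{n^2}\pmod 4.
\end{align*}
The argument $4p^{2k+1}n+2p^{2k+1}$ equals $2M$ with $M=p^{2k+1}(2n+1)$. When $p\nmid 2n+1$, the prime $p$ divides $M$ to an odd power, so $M$ is not a square and $\bar a_{2\alpha}(2M)\equiv0\pmod 4$. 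The parity hypothesis on $\bar a_{2\alpha}(2p)$ is then not even needed. I would double-check this, since it suggests the authors' route differs.

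\textbf{Case $s=2\beta+1$, modulus $8$.} Work modulo $8$, where $\varphi(-q)^{-(s+1)}=\varphi(-q)^{-(2\beta+2)}$ is $1$ or $\varphi(-q)^{-2}\equiv\varphi(-q)^2$ according to the parity of $\beta+1$. Similarly $\varphi(q^2)^s$ reduces to $\varphi(q^2)$ or $\varphi(q^2)^3$, and the extra term $4\binom{s}{2}q\psi(q^4)^2\varphi(q^2)^{s-2}$ reduces modulo $8$ to $4\binom{s}{2}q\psi(q^4)^2$.

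In every subcase the coefficient of $q^M$ with $M$ odd becomes an explicit $\mathbb Z$-combination of four quantities:
\begin{itemize}[label={}]
\item $2\cdot[M=\square]$;
\item $4\cdot(\text{squares and doubled squares at }M)$;
\item $4\,r_2(M)/4$, or the number of representations $M=x^2+2y^2$, coming from products such as $\varphi(-q)\varphi(q^2)$;
\item $4\binom{s}{2}\,t_2\big((M-1)/4\big)$, where $t_2$ counts representations as a sum of two triangular numbers, coming from $q\psi(q^4)^2$.
\end{itemize}
For $M=p^{2k+1}(2n+1)$ with $p\nmid 2n+1$, the square-indicator terms vanish. The remaining counts are, up to normalization, coefficients of multiplicative functions attached to weight-one CM forms (Hecke eigenforms), of the shape $\sum_{d\mid M}\chi(d)$. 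Multiplicativity gives a factor of the form $\sum_{j=0}^{2k+1}\chi(p)^j$. This factor is $0$ when $\chi(p)=-1$ and equals $2k+2$, which is even, when $\chi(p)=1$. In either case the bracket multiplying $4$ is even, so $\bar a_{2\beta+1}(2M)\equiv0\pmod 8$.

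The hypothesis $\bar a_{2\beta+1}(2p)\equiv0\pmod 2$ should enter exactly here. Following Newman's approach, the $p$-th coefficient, which is the Hecke eigenvalue, is expressed through $\bar a_{2\beta+1}(2p)$. Its vanishing modulo $2$ then gives the eigenvalue congruence needed to kill all $p^{2k+1}$ coefficients, including the subcase where the combination is not a single Hecke eigenform.

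\textbf{Main obstacle.} The hard step is the bookkeeping modulo $8$ in the odd case: carrying the $4\binom{s}{2}$ term and the $\varphi^2\equiv1+4(\cdots)$ corrections precisely enough to identify a clean eigenform. I would first compute the two subcases $\beta$ odd and $\beta$ even separately. I would then verify the reduced series against small values such as $\bar a_3(2n)$ before invoking multiplicativity.
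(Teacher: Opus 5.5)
Your even case is correct and complete, and it takes a genuinely different route from the paper. Modulo $4$ your reduction gives $\sum_{n\ge0}\bar a_{2\alpha}(2n)q^n\equiv\varphi(-q)$. Hence $\bar a_{2\alpha}(2M)\equiv0\pmod 4$ for every non-square $M$, and $M=p^{2k+1}(2n+1)$ with $p\nmid 2n+1$ is never a square.

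The paper instead reaches $\sum_{n\ge0}\bar a_{2\alpha}(4n+2)q^n\equiv 2f_1^{12}\pmod 4$ by iterated $2$-dissections of $1/f_1^2,1/f_1^4,1/f_1^8$. It then passes to $f_1^4f_2^4$ modulo $2$ and applies Newman's recurrence, which needs the coefficient of $q^{(p-1)/2}$ in $f_1^4f_2^4$ to be even.

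Your suspicion about the hypothesis is right. The generating function is $\equiv1\pmod 2$, so $\bar a_s(n)$ is even for every $n\ge1$, as the paper itself remarks. The condition Newman's argument actually uses also always holds: $f_1^4f_2^4\equiv f_1^{12}\equiv\psi(q^4)\pmod 2$ has odd coefficients only where $2n+1$ is a square. Your route gives an unconditional statement with no Hecke theory; the paper's gives a uniform template it reuses for other powers of $f_1$.

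The gap is in the odd case. You never carry out the reduction modulo $8$. Instead you posit a list of four contributions, some of which cannot occur: since $s+1=2\beta+2$ is even, only even powers of $\varphi(-q)$ appear, so there is no $2\cdot[M=\square]$ term and no $\varphi(-q)\varphi(q^2)$ product. You also expect the hypothesis $\bar a_{2\beta+1}(2p)\equiv0\pmod 2$ to supply an eigenvalue congruence for a subcase that is not a single eigenform. That step cannot work, because a hypothesis satisfied by every $p$ carries no information. If some extra input were genuinely required, your argument would not close, so as written the odd case is a sketch rather than a proof.

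In fact nothing extra is needed, and your own tools finish it.

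For $\beta$ odd, $s\equiv3\pmod 4$, $\varphi(-q)^{-(s+1)}\equiv1\pmod 8$, and $\binom{s}{2}$ is odd. Using $\psi(q^4)^2\equiv\psi(q^8)\pmod 2$ and $q\psi(q^8)=\sum_{k\ge0}q^{(2k+1)^2}$, this gives
\begin{align*}
\sum_{n\ge0}\bar a_{2\beta+1}(2n)q^n\equiv\varphi(q^2)^3+4\sum_{k\ge0}q^{(2k+1)^2}\pmod 8.
\end{align*}
For $\beta$ even, $s\equiv1\pmod 4$, $\binom{s}{2}$ is even, and $\varphi(-q)^{-(s+1)}\equiv\varphi(-q)^2\pmod 8$. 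Your formula $\varphi(-q)^2\equiv1+4(Y_-+Y_-(q^2))$ then yields
\begin{align*}
\sum_{n\ge0}\bar a_{2\beta+1}(2n)q^n\equiv\varphi(q^2)\varphi(-q)^2\equiv\varphi(q^2)+4\sum_{n\ge1}\left(q^{n^2}+q^{2n^2}\right)\pmod 8.
\end{align*}
In both cases $\bar a_{2\beta+1}(2M)\equiv4\cdot[M=\square]\pmod 8$ for odd $M$. This is the paper's congruence $\sum_{n\ge0}\bar a_{2\beta+1}(4n+2)q^n\equiv4f_1^{12}\pmod 8$ in disguise. The non-square argument from your even case then completes the proof, again with no hypothesis on $\bar a_{2\beta+1}(2p)$.
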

\begin{theorem}\label{t4n3}
 Let $p$ be a prime with $p\equiv 1\pmod 4$ and let $m, \alpha,\beta\ge 1$ be integers with $2\nmid\alpha$. If $\bar{a}_{2^m\alpha}(3p)\equiv 0\pmod 2$, then for all $n,k\ge 0$ with $p\nmid(4n+3)$, we have
 \begin{align*}
 \bar{a}_{2^m\alpha}\left(4p^{2k+1}n+3p^{2k+1}\right)\equiv 0\pmod {2^{m+3}}.
 \end{align*}
 If $\bar{a}_{2\beta+1}(3p)\equiv 0\pmod 2$, then for $n,k\ge 0$ with $p\nmid(4n+3)$, we have
 \begin{align*}
 	\bar{a}_{2\beta+1}\left(4p^{2k+1}n+3p^{2k+1}\right)\equiv 0\pmod {16}.
 \end{align*}
\end{theorem}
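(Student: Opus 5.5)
The plan is to rewrite the generating function \eqref{gf} in terms of Ramanujan's theta functions $\varphi(q)=f_2^5/(f_1^2f_4^2)$ and $\psi(q)=f_2^2/f_1$. Then I would extract the progression $4n+3$ and show that, modulo $2^{m+3}$ (resp. $16$), it equals $2^{m+2}$ (resp. $8$) times a lacunary series. That series should be the coefficient sequence of a weight-one Hecke eigenform with complex multiplication. Once this reduction is in place, the $p$-power congruences follow from the Hecke relation at a prime $p\equiv 1\pmod 4$, in the style of Newman's results.

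\textbf{Step 1 (rewriting).} Since $f_2/f_1^2=1/\varphi(-q)$ and $f_2^2/f_4=\varphi(-q^2)$, \eqref{gf} becomes $\varphi(-q^2)^{s-1}/\varphi(-q)^s$. The identity $\varphi(q)\varphi(-q)=\varphi(-q^2)^2$ then gives
\begin{align*}
\sum_{n\ge0}\bar a_s(n)q^n=\frac{\varphi(q)^s}{\varphi(-q^2)^{s+1}}.
\end{align*}

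\textbf{Step 2 (the $3\bmod 4$ part).} I would substitute the $2$-dissections $\varphi(q)=\varphi(q^4)+2q\psi(q^8)$ and $\varphi(-q^2)=\varphi(q^8)-2q^2\psi(q^{16})$, and expand binomially with $\varphi(q^4),\varphi(q^8)$ treated as units modulo powers of $2$. The numerator contributes terms $\binom{s}{j}2^jq^j\psi(q^8)^j\varphi(q^4)^{s-j}$. The denominator expands as $\varphi(q^8)^{-s-1}\sum_i\binom{s+i}{i}2^iq^{2i}\psi(q^{16})^i\varphi(q^8)^{-i}$. A term lands in exponents $\equiv3\pmod4$ exactly when $j+2i\equiv3\pmod 4$.

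The leading term is $(j,i)=(1,1)$, with coefficient $4s(s+1)\,q^3\psi(q^8)\psi(q^{16})$ times a unit. This has $2$-adic valuation $m+2$ when $s=2^m\alpha$, and at least $3$ when $s=2\beta+1$.

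The remaining terms are $(3,0)$, $(1,3)$, $(3,2)$, $(5,1)$, and so on. For each I would show that either its $2$-adic valuation is at least $m+3$ (resp. $4$), or its valuation is exactly $m+2$ and, modulo $2$, it reduces to a multiple of the same series $q^3\psi(q^8)\psi(q^{16})$. The reduction mod $2$ uses $\psi(q)^2\equiv\psi(q^2)$, $\varphi(q)\equiv1$ and $\psi(q)^4\equiv\psi(q^4)$. For instance, the $(3,0)$ term has valuation $3+v_2\binom{s}{3}=m+2$, and $q^3\psi(q^8)^3\equiv q^3\psi(q^8)\psi(q^{16})\pmod 2$, so it merges with the leading term. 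The combined odd coefficient must then be tracked. The outcome should be
\begin{align*}
\sum_{n\ge0}\bar a_{2^m\alpha}(4n+3)q^{4n+3}\equiv 2^{m+2}\,c\,q^3\psi(q^8)\psi(q^{16})\pmod{2^{m+3}}
\end{align*}
for some integer $c$, and analogously with $8$ and $16$ for odd $s$. If $c$ turns out even, the congruence is immediate, so I may assume $c$ is odd.

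\textbf{Step 3 (Hecke step).} We have $q\psi(q^8)=\sum_{a\ \mathrm{odd}>0}q^{a^2}$ and $q^2\psi(q^{16})=\sum_{b\ \mathrm{odd}>0}q^{2b^2}$. Hence $q^3\psi(q^8)\psi(q^{16})=\sum_N r(N)q^N$, where $r(N)$ counts solutions of $N=a^2+2b^2$ with $a,b$ odd and positive. This is an eta quotient, a weight-one CM newform attached to $\mathbb{Q}(\sqrt{-2})$. So for primes $p$ there is an eigenvalue $\lambda(p)$ with
\begin{align*}
r(pN)=\lambda(p)r(N)-\chi(p)r(N/p).
\end{align*}
Since $r(3)=1$, the hypothesis $\bar a_{2^m\alpha}(3p)\equiv0\pmod2$ (read through Step 2 at level $2^{m+2}$) gives $\lambda(p)\equiv0\pmod 2$. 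Then $r(pN)\equiv r(N/p)\pmod 2$. Iterating gives $r(p^{2k+1}N)\equiv r(N/p)\equiv0\pmod 2$ whenever $p\nmid N$. Taking $N=4n+3$ and using Step 2 yields both claimed congruences.

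\textbf{Main obstacle.} I expect the bookkeeping in Step 2 to be the real difficulty. All binomial terms of valuation exactly $m+2$ (resp. $3$) must be collected, for instance from $j=3$ when $v_2\binom{s}{3}=m-1$. I must also verify that they reduce mod $2$ to the single eigenform series rather than to some other lacunary series. If a stray series survives, I would first try the identity $\psi(q)^2\equiv\psi(q^2)\pmod 2$ together with the fact that $\psi(q^8)^3$ and $\psi(q^8)\psi(q^{16})$ agree mod $2$. Failing that, I would restrict to the progression via the representation count and check directly that the extra series also satisfies the Hecke relation at $p$.
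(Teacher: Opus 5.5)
Your route differs genuinely from the paper's and can be made to work. As written, however, Step 3 rests on a false claim. The series $F=q^3\psi(q^8)\psi(q^{16})=\eta(16z)\eta(32z)^2/\eta(8z)=\sum_N r(N)q^N$ is not a CM newform.
\begin{itemize}
\item It is not cuspidal: by Ligozat's formula its order at the cusp $0$ is proportional to $\frac{1}{16}+\frac{2}{32}-\frac{1}{8}=0$.
\item It is not a Hecke eigenform at every prime. The $q^1$-coefficient of $T_3F$ is $r(3)=1$, while $F$ has no $q^1$ term, so no $\lambda(3)$ exists.
\end{itemize}
In fact, since $h(-8)=1$, $F$ is a combination of two weight-one Eisenstein series. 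Their $T_p$-eigenvalues are $1+\left(\frac{-8}{p}\right)$ and $\left(\frac{-4}{p}\right)+\left(\frac{8}{p}\right)$, which differ when $p\equiv 3\pmod 8$. So you cannot quote $r(pN)=\lambda(p)r(N)-\chi(p)r(N/p)$ from eigenform theory.

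The relation is nevertheless true for $p\equiv 1\pmod 4$, and you can prove it directly. If $N\equiv 3\pmod 8$, every solution of $N=x^2+2y^2$ has $x,y$ odd, and the number of integer solutions is $2\sum_{d\mid N}\left(\frac{-8}{d}\right)$. Hence $r(N)=\frac12\sum_{d\mid N}\left(\frac{-8}{d}\right)$ for $N\equiv 3\pmod 8$, and $r(N)=0$ otherwise. Multiplicativity of this divisor sum then gives two cases:
\begin{itemize}
\item $r(pN)=2r(N)-r(N/p)$ for $p\equiv 1\pmod 8$;
\item $r(pN)=r(N/p)$ for $p\equiv 5\pmod 8$.
\end{itemize}
Either way $r(pN)\equiv r(N/p)\pmod 2$, and your iteration goes through.

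This repair also removes your use of the hypothesis, which you need it to do. The hypothesis $\bar a_s(3p)\equiv 0\pmod 2$ is vacuous as stated, because the generating function is $\equiv 1\pmod 2$. Reading it ``at level $2^{m+2}$'' is a strengthening you are not given. The paper makes the same strengthening tacitly when it assumes $c_3\left(\frac{3(p-1)}{4}\right)$ is even. With the formula above, $\lambda(p)=r(3p)\in\{2,0\}$, so the parity you need holds automatically.

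Step 2 is easier than you fear. For $s=2^m\alpha$ and odd $j$ you have $v_2\binom{s}{j}\ge m$, so the $(j,i)$ term has valuation at least $m+i+j$. Only $(1,1)$ survives modulo $2^{m+3}$. Your valuation for $(3,0)$ is off by one: it is at least $m+3$. For odd $s$, the $(1,1)$ and $(3,0)$ terms have valuations $2+v_2(s+1)$ and $2+v_2(s-1)$, and exactly one of these equals $3$. So $c=1$ in every case.

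Compared with the paper, you land on the same mod-$2$ object, since $\psi(q^8)\psi(q^{16})=f_{16}f_{32}^2/f_8\equiv f_8f_{64}\equiv f_4^{18}\pmod 2$. The paper reaches $2^{m+2}f_1^{18}$ (resp.\ $8f_1^{18}$) through the eta-quotient dissections of Lemmas \ref{lgf1} and \ref{lgf2}. It then applies Newman's weight-$9$ identity (Lemma \ref{lnew53}) for $f_1^{18}$. Your theta-function route replaces that black box with an explicit weight-one divisor sum. Once Step 3 is repaired, your argument is more elementary and shows that the parity condition always holds.
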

	\begin{theorem}\label{t8n4}
		Let $p\ge 3$ be a prime and let $m,\alpha,\beta \ge1$ be integers. If $\bar{a}_{2\alpha}(4p)\equiv 0\pmod 2$, then for $n,k\ge 0$ with $p\nmid(2n+1)$, we have
	\begin{align*}
		\bar{a}_{2\alpha}\left(8p^{2k+1}n+4p^{2k+1}\right)\equiv 0\pmod {8}.
	\end{align*}
	If $\bar{a}_{2\beta+1}(4p)\equiv 0\pmod 2$, then for $n,k\ge 0$ with $p\nmid(2n+1)$, we have
	\begin{align*}
		\bar{a}_{2\beta+1}\left(8p^{2k+1}n+4p^{2k+1}\right)\equiv 0\pmod {4}.
	\end{align*}
\end{theorem}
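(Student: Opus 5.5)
The plan is to reduce the $8n+4$ subsequence, modulo the stated power of $2$, to $2^{j}$ times the coefficients of a single eta quotient that is a Hecke eigenform. The hypothesis $\bar a_s(4p)\equiv 0\pmod 2$ will then say that the Hecke eigenvalue at $p$ is even, and the usual Hecke recursion iterated in $k$ gives the result. I have not carried out the $2$-dissections, so the exact eta quotient and the exact power of $2$ extracted below are the part most likely to need correction.

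\textbf{Rewriting and dissecting.} Write the generating function \eqref{gf} in theta form. Since $f_2/f_1^2=1/\varphi(-q)$ and $f_2^2/f_4=\varphi(-q^2)$, with $\varphi(q)=\sum_{n\in\mathbb Z}q^{n^2}$, we get
\begin{align*}
\sum_{n\ge0}\bar a_s(n)q^n=\frac{\varphi(-q^2)^{s-1}}{\varphi(-q)^{s}} .
\end{align*}
Next use the $2$-dissections $\varphi(q)=\varphi(q^4)+2q\psi(q^8)$ and $\varphi(-q)^2=\varphi(q^2)^2-4q\psi(q^4)^2$, where $\psi(q)=f_2^2/f_1$. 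Expand $\varphi(-q)^{-s}$ by the binomial theorem modulo $16$; only the first few terms survive, because each carries a factor $2$ or $4$. Extract the even part and put $q^2\to q$, then do this once more to isolate the terms $q^{4n+2}$, i.e.\ originally $q^{8n+4}$. Keeping track of the parity of $s$ and the binomial coefficients $\binom{-s}{i}$ modulo $2^{3}$, I expect to arrive at a congruence of the form
\begin{align*}
\sum_{n\ge0}\bar a_s(8n+4)q^n\equiv 2^{j}\,c_s\,F(q)\pmod{2^{j+1}} .
\end{align*}
Here $2^{j+1}$ is the modulus in Theorem \ref{t8n4} ($2^{j+1}=8$ when $s=2\alpha$, $4$ when $s=2\beta+1$), $c_s$ is an integer, and $F(q)$ is, modulo $2$, a simple eta quotient built from $\psi$, such as $\psi(q)^2$ or $\psi(q)^2/\varphi$ reduced by $\varphi\equiv1\pmod 2$. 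Then use $f_1^{2}\equiv f_2\pmod 2$ to replace $F$ by an eta quotient $f_1^{a}f_2^{b}\cdots$ that, after $q\to q^{24}$ or a similar shift, is a Hecke eigenform of integral weight in the sense of Newman's results quoted in the abstract. The natural candidate is a lacunary product like $\eta(8z)^2\eta(16z)^2$ or $\eta(4z)^6$, whose coefficients $b(n)$ are supported on $n$ with $2n+1$ (or $4n+1$) of a fixed shape.

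\textbf{Hecke step and induction on $k$.} For the eigenform one has, for every prime $p\ge3$,
\begin{align*}
b\!\left(p^2n+\tfrac{p^2-1}{2}\right)=\lambda(p)\,b(n)-\chi(p)p^{w-1}\,b\!\left(\tfrac{n-(p^2-1)/2}{p^2}\right),
\end{align*}
with the natural normalisation (shift) matching the progression $2n+1$. Taking $n=0$ identifies $\lambda(p)$ with $b\big((p-1)/2\big)$. Tracing back through the dissection, this equals $\bar a_s(4p)/2^{j}$ modulo $2$, so the hypothesis $\bar a_s(4p)\equiv0\pmod 2$ forces $\lambda(p)\equiv0\pmod2$. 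Now replace $n$ by $pn+\frac{p-1}{2}$ with $p\nmid 2n+1$. The last term vanishes because its argument is not integral, so $b\big(p^{3}n+\cdots\big)\equiv0\pmod 2$. This is the case $k=0$.

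For general $k$, apply the recursion twice. Since $\lambda(p)$ is even, we get $b(p^2N+\cdots)\equiv p^{w-1}\chi(p)\,b(N/p^2+\cdots)$ modulo $2$. This is a two-step periodicity $b(p^{2k+1}m+\cdots)\equiv b(pm+\cdots)\pmod 2$, and induction on $k$ gives $b\big(p^{2k+1}n+\tfrac{p^{2k+1}-1}{2}\big)\equiv0\pmod2$ whenever $p\nmid 2n+1$. Multiplying by $2^{j}$ and translating the index back, via $8(p^{2k+1}n+\frac{p^{2k+1}-1}{2})+4=8p^{2k+1}n+4p^{2k+1}$, gives the two congruences in Theorem \ref{t8n4}.

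\textbf{Main obstacle.} The main obstacle is the first step: carrying the binomial expansion of $\varphi(-q)^{-s}$ through two successive $2$-dissections carefully enough to see exactly which power $2^{j}$ factors out of the $8n+4$ subsequence in each parity case. The moduli in the statement ($8$ for even $s$, $4$ for odd $s$) suggest that $j=2$ for even $s$ and $j=1$ for odd $s$. The residual series must also be checked to be, modulo $2$, independent of $s$ apart from the factor $c_s$. I would first test the dissection numerically for $s=1,2,3,4$ to pin down $F$ before committing to a specific eigenform.
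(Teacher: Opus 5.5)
Your plan follows the paper's route. \eqref{e8n4e} and \eqref{e8n4o} give $\sum_{n\ge0}\bar a_{2\alpha}(8n+4)q^n\equiv 4f_1^{12}\pmod 8$ and $\sum_{n\ge0}\bar a_{2\beta+1}(8n+4)q^n\equiv 2f_1^{12}\pmod 4$. So your guesses $j=2$, $j=1$, $c_s=1$ are right, with $F=f_1^{12}\equiv f_1^4f_2^4\pmod 2$, and the paper then runs Newman's identity \eqref{e1.3.2} for the coefficients $c_1(n)$ of $f_1^4f_2^4$.

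The genuine gap is where the hypothesis enters. You correctly note $\lambda(p)\equiv\bar a_s(4p)/2^{j}\pmod 2$, but then infer that $\lambda(p)$ is even from $\bar a_s(4p)\equiv 0\pmod 2$. That would require $\bar a_s(4p)\equiv 0\pmod{2^{j+1}}$. The stated hypothesis carries no information at all: by \eqref{gf}, $\sum_{n\ge0}\bar a_s(n)q^n\equiv f_2^{2s-2}/f_4^{s-1}\equiv 1\pmod 2$, so $\bar a_s(n)$ is even for every $n\ge 1$. The paper's proof makes the same silent identification when it cites \eqref{e1.3.8}, whose actual hypothesis is $c_1(\frac{p-1}{2})\equiv 0\pmod 2$.

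The missing ingredient is that this parity holds for every odd prime. Since $f_1^{12}\equiv f_4^3\pmod 2$, Jacobi's identity $f_1^3=\sum_{k\ge0}(-1)^k(2k+1)q^{k(k+1)/2}$ gives
\begin{align*}
f_1^{12}\equiv\sum_{k\ge0}q^{2k(k+1)}\pmod 2,
\end{align*}
so $c_1(n)$ is odd exactly when $2n+1$ is a perfect square. Hence $\lambda(p)=c_1(\frac{p-1}{2})$ is even, because $p$ is not a square. In fact this formula proves the theorem outright, with no Hecke theory. Indeed $2\left(p^{2k+1}n+\frac{p^{2k+1}-1}{2}\right)+1=p^{2k+1}(2n+1)$ has odd $p$-adic valuation when $p\nmid 2n+1$, so it is not a square.

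Your Hecke step is also mis-indexed. The relevant form, $\eta(2z)^4\eta(4z)^4=qf_2^4f_4^4$, has integral weight $4$. What you need is therefore the $T_p$-relation $c_1(pn+\frac{p-1}{2})=c_1(\frac{p-1}{2})c_1(n)-p^{3}c_1\left(\frac{n-(p-1)/2}{p}\right)$, that is, \eqref{e1.3.2}, not a relation at $p^2n+\frac{p^2-1}{2}$. With your $p^2$-version three things go wrong:
\begin{itemize}
\item setting $n=0$ yields $b(\frac{p^2-1}{2})$ rather than $b(\frac{p-1}{2})$;
\item the substitution $n\mapsto pn+\frac{p-1}{2}$ lands at $p^3$, which is the case $k=1$, not $k=0$;
\item iterating gives period $p^4$, so only exponents $p^{4t+3}$ are reached, and $k=0$ and every even $k$ are missed.
\end{itemize}
With the $T_p$-relation, an even $\lambda(p)$ gives vanishing at $pn+\frac{p-1}{2}$ for $p\nmid 2n+1$, together with $c_1(p^2n+\frac{p^2-1}{2})\equiv c_1(n)\pmod 2$. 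These combine to give all $k$, exactly as in \eqref{e1.3.4}--\eqref{e1.3.8}.
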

\begin{theorem}\label{t8n5}
		Let $m,\alpha,\beta \ge1$ be integers with $2\nmid \alpha$ and let $c(n)$ be defined by
	\begin{align}\label{e8n51}
		\sum_{n=0}^{\infty}c(n)q^n:= f_1^3 f_2^6,
	\end{align}
	and $p\ge 3$ be a prime. Define
	\begin{align}\label{e8n52}
		\kappa(p):= c\displaystyle\left(\frac{5(p^2-1)}{8}\right) + p^3\displaystyle\left(\frac{-\frac{5}{4}(p^2-1)}{p}\right)_L.
	\end{align}
	(1). For $n,k\ge0$, if $p\nmid n$, then
	\begin{align}\label{e8n53}
		\bar{a}_{2^m\alpha}\displaystyle\left(8p^{\nu(p)(k+1)-1}n + 5p^{\nu(p)(k+1)}\right) &\equiv0\pmod{2^{m+3}},\\
		\bar{a}_{2\beta+1}\left(8p^{\nu(p)(k+1)-1}n + 5p^{\nu(p)(k+1)}\right)&\equiv 0\pmod {16},
	\end{align}
	where
	\begin{align}\label{e8n54}
		\nu(p) :=\begin{cases}
			4, & \text{if } \kappa(p) \equiv 0\pmod 2,\\[4pt]
			6, & \text{if } \kappa(p) \equiv 1\pmod2.
		\end{cases}
	\end{align}
	(2). If $\kappa(p)\equiv1\pmod2$, then for $n,k\ge0$ with $p\nmid (8n+5)$,
	\begin{align}\label{e8n55}
		\bar{a}_{2^m\alpha}\displaystyle\left(8p^{6k+2}n + 5p^{6k+2}\right) &\equiv0\pmod {2^{m+3}}\\
		\bar{a}_{2\beta+1}\left(8p^{6k+2}n + 5p^{6k+2}\right)&\equiv 0\pmod {16}.
	\end{align}
\end{theorem}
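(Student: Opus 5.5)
The plan is to reduce the statement to a parity statement about $c(n)$ and then run a Hecke-operator argument on the eta quotient behind $f_1^3f_2^6$.

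\textbf{Reduction to $c(n)$.} First rewrite \eqref{gf} in terms of theta functions. With $\varphi(q)=f_2^5/(f_1^2f_4^2)$ and $\varphi(-q^2)=f_2^2/f_4$, a short check gives
\begin{align*}
\sum_{n\ge0}\bar a_s(n)q^n=\frac{f_2^{3s-2}}{f_1^{2s}f_4^{s-1}}=\frac{\varphi(q)^s}{\varphi(-q^2)^{s+1}}.
\end{align*}
Next insert the $2$-dissections
\begin{align*}
\varphi(q)=\varphi(q^4)+2q\psi(q^8),\qquad \varphi(-q^2)=\varphi(q^8)-2q^2\psi(q^{16}),
\end{align*}
and expand $\varphi(q)^s$ binomially as $\sum_j\binom{s}{j}\varphi(q^4)^{s-j}(2q\psi(q^8))^j$.

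To land on exponents $\equiv 5\pmod 8$, the index $j$ must be odd. Every additional power of $q^2$ or $q^4$ coming from $\varphi(q^4)^{s-j}$ or from $\varphi(-q^2)^{-(s+1)}$ carries an extra factor $2$. For $s=2^m\alpha$ we have $v_2\big(\binom{s}{j}2^j\big)\ge m+j-v_2(j)$. Hence, modulo $2^{m+3}$, only the $j=1$ term (with one extra factor $2$ from $q^4$) and the $j=3$ term survive. Both are exactly divisible by $2^{m+2}$ times an explicit theta product. Collecting terms, I expect
\begin{align*}
\bar a_{2^m\alpha}(8n+5)\equiv 2^{m+2}c(n)\pmod{2^{m+3}},\qquad \bar a_{2\beta+1}(8n+5)\equiv 8c(n)\pmod{16}.
\end{align*}
Modulo $2$, the relevant product should simplify via $\psi(q)=f_2^2/f_1$, $f_1^2\equiv f_2$, $f_1^4\equiv f_4$ to $\sum c(n)q^n=f_1^3f_2^6$. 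For odd $s=2\beta+1$ the same bookkeeping applies with $v_2(s)=0$. So both parts of the theorem reduce to showing that $c\big(p^{\nu(p)(k+1)-1}n+5(p^{\nu(p)(k+1)}-1)/8\big)$ is even, together with the analogous statement for (2).

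\textbf{Hecke step.} Since $q^5f_8^3f_{16}^6=\eta(8z)^3\eta(16z)^6$ has weight $9/2$, I would invoke the standard fact that it is a Hecke eigenform for $T(p^2)$; this is a dimension or Sturm-bound check at its level. The eigenvalue is $\lambda(p)$, with $\lambda=4$. Comparing the coefficient of $q^{5p^2}$ gives $\lambda(p)=\kappa(p)$ as in \eqref{e8n52}. The eigenform relation reads
\begin{align*}
c\Big(p^2n+\tfrac{5(p^2-1)}8\Big)+p^3\Big(\tfrac{-(8n+5)\cdot 5\cdot\ldots}{p}\Big)_L c(n)+p^7c\Big(\tfrac{n-5(p^2-1)/8}{p^2}\Big)=\kappa(p)c(n).
\end{align*}
Reduced mod $2$, it becomes the three-term recurrence $C_{t+1}\equiv(\kappa+\epsilon_t)C_t+C_{t-1}$ along the chain $N\mapsto p^2N$ on $8n+5$. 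When $p\mid 8N+5$ the Legendre term vanishes. When $p\nmid N$ in the shifted variable, the bottom term $c(\cdot/p^2)$ is zero.

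\textbf{Main obstacle: iterating the recurrence.} This follows Newman's method. If $\kappa(p)\equiv0$, the transfer matrix $\begin{pmatrix}0&1\\1&0\end{pmatrix}$ has order $2$ mod $2$, giving vanishing at every second $p^2$-step. That is period $4$ in powers of $p$, hence $\nu=4$. If $\kappa(p)\equiv1$, the matrix $\begin{pmatrix}1&1\\1&0\end{pmatrix}$ has order $3$ over $\mathbb F_2$, so the period is $6$, hence $\nu=6$. Part (2) then comes from the residue classes where the Legendre symbol is nonzero and $c$ at the start of the chain is forced to vanish.

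The delicate points are the exact $2$-adic bookkeeping in the reduction, in particular confirming that the $j=3$ contribution merges into the $c(n)$ term rather than adding a new one, and tracking which starting index of the chain is forced to vanish so as to match the exponents $\nu(p)(k+1)-1$ and $6k+2$ exactly.
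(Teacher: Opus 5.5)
Your architecture is the paper's. The paper reduces via \eqref{e8n5}, \eqref{e8n5o} and $f_1^{15}\equiv f_1^3f_2^6\pmod 2$ to the parity of $c(n)$. It then applies Newman's relation, Lemma~\ref{lnew62} with $(r,s)=(3,6)$, $q=2$, $\varepsilon=9/2$, $\Delta=5(p^2-1)/8$, and iterates it modulo $2$ exactly as in the proof of Theorem~\ref{t8n7}. Cite that lemma for the Hecke step instead of an unspecified ``dimension or Sturm-bound check''; a Sturm bound only handles one $p$ at a time.

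Your transfer-matrix iteration is correct and reproduces the exponents. Write $N=8M+5$ and $C_t$ for the coefficient at $p^{2t}N_0$.
\begin{itemize}
\item In part (1), $N_0=pN'$ with $N'=8n+5p$ and $p\nmid N'$, so $C_{-1}=0=\epsilon_0$. Then $C_t\equiv0$ for $t$ odd when $\kappa(p)$ is even (giving $p$-adic valuation $4k+3$). When $\kappa(p)$ is odd, $C_t\equiv 0$ for $t\equiv2\pmod 3$ (valuation $6k+5$).
\item In part (2), $N_0=8n+5$ with $p\nmid N_0$ forces $C_1\equiv(\kappa(p)+1)C_0\equiv0$. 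Then $C_t\equiv0$ for $t\equiv1\pmod 3$ (valuation $6k+2$).
\end{itemize}

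The genuine gap is the $2$-adic reduction, which as written does not produce the congruences you ``expect''. Write $\varphi(q^4)=1+2X$ and $\varphi(-q^2)^{-1}=1+2Y$, where $X=\sum_{n\ge1}q^{4n^2}$. Since $1/\varphi(-q)\equiv2-\varphi(-q)\pmod 4$, we have $Y\equiv\sum_{n\ge1}q^{2n^2}\pmod 2$.

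\emph{Even $s$.} Your own bound gives $v_2\bigl(\binom{s}{3}2^3\bigr)\ge m+3$, so the $j=3$ term vanishes modulo $2^{m+3}$, and only $j=1$ survives. Its cofactor $\varphi(q^4)^{s-1}\varphi(-q^2)^{-(s+1)}$ is $\equiv1+2X+2Y\pmod 4$. The part with exponents $\equiv4\pmod 8$ comes only from $2X$ and equals $2q^4\psi(q^{32})$. This yields $\sum_{n\ge0}\bar a_{2^m\alpha}(8n+5)q^n\equiv2^{m+2}\psi(q)\psi(q^4)\pmod{2^{m+3}}$. Finally $\psi(q)\psi(q^4)=f_2^2f_8^2/(f_1f_4)\equiv f_1^3f_2^6\pmod 2$.

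\emph{Odd $s=2\beta+1$.} This is the more serious problem. ``The same bookkeeping with $v_2(s)=0$'' only gives a term of size $2^2$ modulo $2^3$, and that term is actually zero. The reason is that $s-1$ and $s+1$ are even, so the cofactor is $\equiv1\pmod 4$. To reach $8c(n)\pmod{16}$ you must expand the cofactor modulo $8$:
\begin{itemize}
\item $(1+2X)^{2\beta}\equiv1+4\beta(X+X^2)\pmod 8$, whose part with exponents $\equiv4\pmod 8$ is $4\beta q^4\psi(q^{32})$;
\item $(1+2Y)^{2\beta+2}\equiv1+4(\beta+1)(Y+Y^2)\pmod 8$, whose corresponding part is $4(\beta+1)q^4\psi(q^{32})$, coming from $Y^2$.
\end{itemize}
These sum to $4(2\beta+1)q^4\psi(q^{32})$, so the $j=1$ term is $\equiv8q^5\psi(q^8)\psi(q^{32})\pmod{16}$. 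The $j=3$ term needs an exponent $\equiv2\pmod 8$ from $(1+2Y)^{s+1}$, which costs a fourth factor of $2$, so it vanishes modulo $16$. Without this modulo-$8$ computation, the modulo-$16$ assertions in both parts of the theorem are unsupported.
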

 \begin{theorem}\label{t8n6}
 	Let $p$ be a prime with $p\equiv 1\pmod 4$ and let $s$ be a positive integer. If $\bar{a}_s(6p)\equiv 0\pmod 2$, then for $n,k\ge 0$ with $p\nmid(4n+3)$, we have
 	\begin{align*}
 		\bar{a}_s\left(8p^{4k+1}n+6p^{4k+1}\right)\equiv 0\pmod {16}.
 	\end{align*}
 \end{theorem}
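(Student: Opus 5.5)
The plan is to rewrite the generating function \eqref{gf} as a theta-quotient, extract the progression $8n+6$, and show that modulo $16$ it equals $8$ times a single series. That series, reduced modulo $2$, will be treated with Hecke operators. Write $\varphi(q)=f_2^5/(f_1^2f_4^2)$ and $\psi(q)=f_2^2/f_1$, so that $\varphi(-q)=f_1^2/f_2$ and $\varphi(-q^2)=f_2^2/f_4$. Then
\begin{align*}
\frac{f_2^{3s-2}}{f_1^{2s}f_4^{s-1}}=\left(\frac{f_2}{f_1^2}\right)^{s}\left(\frac{f_2^2}{f_4}\right)^{s-1}=\frac{\varphi(-q^2)^{s-1}}{\varphi(-q)^{s}}=\frac{\varphi(q)^{s}}{\varphi(-q^2)^{s+1}},
\end{align*}
where the last step uses $\varphi(q)\varphi(-q)=\varphi(-q^2)^2$. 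The denominator is a function of $q^2$. So the even part comes only from the even part of $\varphi(q)^s$, which we expand using $\varphi(q)=\varphi(q^4)+2q\psi(q^8)$ and the binomial theorem. After $q^2\to q$ we get
\begin{align*}
\sum_{n\ge0}\bar a_s(2n)q^n=\frac{1}{\varphi(-q)^{s+1}}\sum_{j\ \mathrm{even}}\binom{s}{j}2^j q^{j/2}\varphi(q^2)^{s-j}\psi(q^4)^j.
\end{align*}

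Next we isolate the terms $q^{4n+3}$. Modulo $16$ only $j=0$ and $j=2$ contribute, up to an explicit term $16(\cdots)$. Since $\varphi(-q)^{-1}=\varphi(q)/\varphi(-q^2)^2$, we can iterate the same dissection to split the powers of $\varphi(\pm q)$ into residue classes modulo $4$. Each $2$-adic factor $2q\psi(q^8)$ raises the power of $2$, so we work only modulo $16$. The key computation is
\begin{align*}
\sum_{n\ge0}\bar a_s(8n+6)q^{n}\equiv 8\,\epsilon_s\,F(q)\pmod{16},
\end{align*}
where $F$ is a short eta-quotient with odd coefficient structure, of the shape $\psi(q)^3$ or $f_1^3f_2^{\,r}$ up to a factor that is $\equiv1\pmod 2$. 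The $s$-dependence should collapse into the parity factor $\epsilon_s$ through binomial coefficients modulo $2$; the theorem is stated for all $s$, which suggests exactly this collapse. The cross terms modulo $16$ must be handled using $\varphi(q)^2\equiv1$, $\varphi(q)^4\equiv \varphi(q^2)^2 \pmod{8}$ and similar congruences. I expect this bookkeeping, in particular showing that everything except a multiple of $8F$ vanishes, to be the main obstacle. I would first do it for $s=1,2,3,4$ and then identify the $s$-dependence.

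Finally we study $F$ modulo $2$, writing $\sum b(n)q^{4n+3}=q^3F(q^4)$ so that the relevant index $4n+3$ matches the hypothesis $p\nmid(4n+3)$. The aim is to express $F$ modulo $2$ as a theta series of a sum of three squares restricted to $4n+3$, such as $q^{3}\psi(q^{4})^{3}$ type sums. Equivalently, we realise it as a Hecke eigenform modulo $2$ with eigenvalue $\lambda(p)$, so that $b(pn)+\chi(p)p^{w}b(n/p)\equiv\lambda(p)b(n)$. The hypothesis $\bar a_s(6p)\equiv0\pmod 2$ (index $6p=8\cdot0+6$ with $p$ in place of $3$) then gives $\lambda(p)\equiv0$. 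Hence $b(pN)\equiv 0$ whenever $p\nmid N$, and $b(p^2N)\equiv\pm b(N)$ up to a sign whose square is $1$. Iterating twice reduces $p^{4k+1}(4n+3)$ to $p(4n+3)$ with $p\nmid(4n+3)$, where $b$ vanishes modulo $2$. Multiplying by $8$ gives $\bar a_s(8p^{4k+1}n+6p^{4k+1})\equiv0\pmod{16}$. The condition $p\equiv1\pmod4$ is where $\chi(p)=1$ and where $p^{4k+1}(4n+3)\equiv 3\pmod 4$ stays in the progression; the latter is what makes the iteration consistent.
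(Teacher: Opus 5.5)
Your outline has the same architecture as the paper: a congruence modulo $16$ for the progression $8n+6$, then a mod-$2$ multiplicative relation at primes $p\equiv 1\pmod 4$. But the decisive step is left undone, and your guesses about it are wrong. The paper's Lemma \ref{lgf3}, \eqref{e8n6}, gives $\sum_{n\ge0}\bar a_s(8n+6)q^n\equiv 8f_1^{18}\pmod{16}$ for \emph{every} $s\ge1$. It proves this with the $2$-dissections \eqref{edf2}, \eqref{edf4}, \eqref{edf8}, separately for $s$ even and $s$ odd, followed by Lemma \ref{bt}. So there is no parity factor: $\epsilon_s=1$.

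Neither of your candidate shapes for $F$ is right. We have $f_1^{18}\equiv f_2^{9}\equiv\psi(q^2)\psi(q^4)\pmod 2$, an even function of $q$, whereas $\psi(q)^3$ and $f_1^3f_2^r$ (times a unit mod $2$) have odd $q^1$-coefficient. Since your Hecke step depends on knowing $F$, it currently has nothing concrete to act on. Once $F=f_1^{18}$ is known, the relation you want is exactly Newman's Lemma \ref{lnew53} with $r=18$ and $\delta=3(p-1)/4$. The condition $18(p-1)\equiv0\pmod{24}$ is where $p\equiv1\pmod 4$ is genuinely used. Your remark about staying in the class $3\pmod 4$ holds automatically for every power of such $p$. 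Also, ``iterating twice'' should be ``iterating $2k$ times''.

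The step ``$\bar a_s(6p)\equiv0\pmod2$ gives $\lambda(p)\equiv0$'' does not follow. Write $f_1^{18}=\sum c_3(n)q^n$ as in the paper. Then $\lambda(p)\equiv c_3(3(p-1)/4)\pmod 2$, while $\bar a_s(6p)\equiv 8\,c_3(3(p-1)/4)\pmod{16}$. So only $\bar a_s(6p)\equiv0\pmod{16}$ would control $\lambda(p)$. Modulo $2$ the hypothesis carries no information, since $\bar a_s(n)$ is even for all $n\ge1$. (The paper's appeal to \eqref{e1.1.8} passes over the same point.)

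The gap can be closed unconditionally. From $f_1^{18}\equiv\psi(q^2)\psi(q^4)\pmod 2$, $c_3(n)$ is congruent modulo $2$ to the number of representations $4n+3=x^2+2y^2$ with $x,y$ positive odd integers. For $4n+3=3p$ this number equals $1+\left(\frac{-2}{p}\right)_L\in\{0,2\}$. Hence $\lambda(p)$ is even for every prime $p\equiv1\pmod4$. Your iteration ($b(pN)\equiv0$ for $p\nmid N$ and $b(p^2N)\equiv b(N)$) then finishes the argument.
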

	\begin{theorem}\label{t8n7}
	Let $m,\alpha,\beta \ge1$ be integers with $2\nmid \alpha$ and let $b(n)$ be defined by
	\begin{align}\label{e1.1}
		\sum_{n=0}^{\infty}c_4(n)q^n:= f_1 f_2^{10},
	\end{align}
	and $p\ge 3$ be a prime. Define
	\begin{align}\label{e1.2}
		\kappa(p):= c_4\displaystyle\left(\frac{7(p^2-1)}{8}\right) + p^4\displaystyle\left(\frac{-\frac{7}{4}(p^2-1)}{p}\right)_L.
	\end{align}
	(1). For $n,k\ge0$, if $p\nmid n$, then
	\begin{align}\label{e1.3}
		\bar{a}_{2^m\alpha}\displaystyle\left(8p^{\nu(p)(k+1)-1}n + 7p^{\nu(p)(k+1)}\right) &\equiv0\pmod{2^{m+4}},\\
		\bar{a}_{2\beta+1}\left(8p^{\nu(p)(k+1)-1}n + 7p^{\nu(p)(k+1)}\right)&\equiv 0\pmod {128},
	\end{align}
	where
	\begin{align}\label{e1.4}
		\nu(p) :=\begin{cases}
			4, & \text{if } \kappa(p) \equiv 0\pmod 2,\\[4pt]
			6, & \text{if } \kappa(p) \equiv 1\pmod2.
		\end{cases}
	\end{align}
	(2). If $\kappa(p)\equiv1\pmod2$, then for $n,k\ge0$ with $p\nmid (8n+7)$,
	\begin{align}\label{e1.5}
		\bar{a}_{2^m\alpha}\displaystyle\left(8p^{6k+2}n + 7p^{6k+2}\right) &\equiv0\pmod {2^{m+4}}\\
		\bar{a}_{2\beta+1}\left(8p^{6k+2}n + 7p^{6k+2}\right)&\equiv 0\pmod {128}.
	\end{align}
\end{theorem}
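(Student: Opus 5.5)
The plan is to reduce the statement to a congruence between the generating function of $\bar{a}_s(8n+7)$ and a constant multiple of $f_1f_2^{10}$. After that, the Hecke structure of the half-integral weight eta-product $\eta(8z)\eta(16z)^{10}$, through Newman's theorem, supplies the vanishing of $c_4$ along the relevant arithmetic progressions.

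\textbf{Step 1 (2-dissection).} Starting from \eqref{gf}, write $\dfrac{f_2^{3s-2}}{f_1^{2s}f_4^{s-1}} = \dfrac{f_2^{3s-2}}{f_4^{s-1}}\Big(\dfrac{1}{f_1^2}\Big)^{s}$. Then insert the standard dissection
\[
\frac{1}{f_1^2}=\frac{f_8^5}{f_2^5f_{16}^2}+2q\frac{f_4^2f_{16}^2}{f_2^5f_8},
\]
and expand by the binomial theorem. Extract odd powers, then the terms $q^{4n+3}$, then $q^{8n+7}$, using the companion dissections of $1/f_1^4$ and $f_1^2$, or of $1/f_2^2$, $f_2^2$ after $q\to q^2$. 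We reduce throughout modulo $2^{m+4}$ when $s=2^m\alpha$, and modulo $128$ when $s$ is odd.

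\textbf{Step 2 (collapse to one term).} Apply $f_1^{2^{j}}\equiv f_2^{2^{j-1}}\pmod{2^{j}}$ together with the $2$-adic size of the binomial coefficients $\binom{s}{i}$. Here $v_2\binom{2^m\alpha}{i}\ge m-v_2(i)$, which is why $m$ appears in the modulus. The expected result is
\[
\sum_{n\ge0}\bar{a}_{2^m\alpha}(8n+7)q^n\equiv 2^{m+3}f_1f_2^{10}\pmod{2^{m+4}},
\qquad
\sum_{n\ge0}\bar{a}_{2\beta+1}(8n+7)q^n\equiv 64 f_1f_2^{10}\pmod{128},
\]
possibly up to an odd constant factor. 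Hence it suffices to prove $c_4(N)\equiv0\pmod 2$ for $N$ in the required progressions. I expect this step to be the main obstacle: tracking which terms of the binomial expansion survive, and verifying that every other term carries the extra factor of $2$, is delicate. I would first handle $s=1,2$ explicitly and then argue by the $2$-adic valuation of $\binom{s}{i}\,2^{i}$.

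\textbf{Step 3 (Newman/Hecke).} Since $\sum c_4(n)q^{8n+7}=\eta(8z)\eta(16z)^{10}$ is a weight $11/2$ form, Newman's theorem gives, for primes $p\ge3$,
\[
c_4\!\left(p^2n+\tfrac{7(p^2-1)}{8}\right)+p^4\Big(\tfrac{-n-\frac{7}{4}(p^2-1)\cdot\ldots}{p}\Big)_L c_4(n)+p^{9}c_4\!\left(\tfrac{n-7(p^2-1)/8}{p^2}\right)=\kappa(p)\,c_4(n).
\]
Setting $n=0$ identifies $\kappa(p)$ as in \eqref{e1.2}.

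\textbf{Step 4 (iteration mod 2).} Define $A_k(n)=c_4\big(p^{2k}n+\tfrac{7(p^{2k}-1)}{8}\big)$. The relation above becomes, modulo $2$, a linear recurrence $A_{k+1}\equiv(\kappa(p)+\epsilon_k)A_k+A_{k-1}$, where the Legendre term vanishes whenever $p\mid$ the argument.
\begin{itemize}
\item If $\kappa(p)$ is even, two steps give $c_4(p^4n+\cdots)\equiv c_4(\cdots)$ up to a term that vanishes for $p\nmid n$. This yields period $\nu(p)=4$ in the exponent.
\item If $\kappa(p)$ is odd, the recurrence $x_{k+1}=x_k+x_{k-1}$ over $\mathbb{F}_2$ has period $3$, which gives $\nu(p)=6$. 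The intermediate stage of this recurrence produces part (2) with exponent $6k+2$ under $p\nmid(8n+7)$.
\end{itemize}

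Finally, translate the indices $8\big(p^{2j}n+\tfrac{7(p^{2j}-1)}{8}\big)+7=8p^{2j}n+7p^{2j}$. Replacing $n$ by $pn+r$ with $p\nmid n$ then yields the forms $8p^{\nu(p)(k+1)-1}n+7p^{\nu(p)(k+1)}$. Combining this with Step 2 gives both parts of the statement, modulo $2^{m+4}$ and modulo $128$ respectively.
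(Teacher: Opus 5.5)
Your proposal follows the paper's proof. The congruence you single out in Step 2 is exactly what the paper records as \eqref{e8n} and \eqref{e8n7o}, via $f_1^{21}\equiv f_1f_2^{10}\pmod 2$; your plan to carry the full modulus $2^{m+4}$, resp.\ $128$, through every extraction is what that step needs. Steps 3--4 are the paper's use of Lemma~\ref{lnew62} for $f_1f_2^{10}$ followed by the mod-$2$ iteration, and your recurrence $A_{k+1}\equiv\kappa(p)A_k+A_{k-1}$ (period $2$ or $3$ over $\mathbb{F}_2$) is a compact form of the substitutions \eqref{e3.6}--\eqref{e3.21}.

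The one thing to repair is the Legendre symbol in Step 3. Its argument must be a unit multiple of $n-\tfrac{7(p^2-1)}{8}$ (the paper writes $2n-\tfrac74(p^2-1)$), because it vanishes exactly when $p\mid 8n+7$. That vanishing is what gives $\epsilon_k=0$ for $k\ge1$, and for $k=0$ after $n\mapsto pn+\tfrac{7(p^2-1)}{8}$, and it is what drives part (2).
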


The structure of the paper is as follows. In Section \ref{s2}, we recall some definitions and basic facts on modular forms, and necessary results of Newman employed to prove our main theorems. In Section \ref{s3}, we obtain generating functions modulo powers of $2$ and provide the proof of new infinite families of congruences for $a_s(n)$ stated above. 
\section{Preliminaries}\label{s2}
	In this section, we collect necessary definitions, results and properties of modular forms. Throughout this paper, we consider only integer weight modular forms. For interest of readers, we cite the following book \cite{wom}.\\
\indent  For a positive integer $N$, $\Gamma_0(N)$ denotes a subgroup of $SL_2(\mathbb{Z})$ that is defined by
\begin{align*}
	\Gamma_0(N)&:= 
	\left\{
	\begin{bmatrix}
		a & b \\
		c & d
	\end{bmatrix} \in SL_2(\mathbb{Z})
	\,:\, c \equiv 0\!\!\!\! \pmod N 
	\right\}.
\end{align*}
We note that $\gamma=\begin{bmatrix}
	a & b \\
	c & d
\end{bmatrix}\in SL_2(\mathbb{Z})$ acts on $\mathbb{H}=\{z: Im(z)>0\}$, the upper half of complex plane $\mathbb{C}$, by bilinear transformation defined by $\gamma z:=\dfrac{az
	+b}{cz+d} $. If $\chi$ is a Dirichlet character modulo $N$ and $k$ is a positive integer, then any meromorphic function $f(z)$ on $\mathbb{H}$ which satisfies
	\begin{align*}
		f(\gamma z)=\chi(d)(cz+d)^kf(z),
	\end{align*} 
	for all $\gamma\in\Gamma_0(N)$ and $z\in\mathbb{H}$ is called a modular form of weight $k$ and Nebentypus character $\chi$ with respect to $\Gamma_0(N)$. If $f(z)$ is holomorphic on $\mathbb{H}$ and at all the cusps of $\Gamma_0(N)$, then $f(z)$ is a holomorphic modular form of weight $k$ with respect to $\Gamma_0(N)$ and character $\chi$. The space of such modular forms is denoted by $M_k(\Gamma_0(N), \chi)$.\\
\indent Recall the Dedekind's eta-function $\eta(z)$ defined by
\begin{align}\label{2.1}
	\eta(z):=q^{1/24}(q;q)_\infty=q^{1/24}\prod_{n=0}^\infty(1-q^{n}), \text{ where } q=e^{2\pi iz},
\end{align}
which is a non-vanishing holomorphic function on $\mathbb{H}$. A function is called an eta-quotient if it is of the form
\begin{align}
f(z)=\prod_{\delta\mid N}\eta(\delta z)^{r_\delta},
\end{align}
where $N$ is a positive integer and $r_{\delta}$ is an integer.\\ 
\indent If $f(z)$ is an eta-quotient associated with positive integer weight $k$ satisfying the conditions of the following result due to Gordon-Hughes \cite{gordon} and  Newman \cite{newmf}, and is holomorphic at all the cusps of $\Gamma_0(N)$, then $f(z) \in M_k(\Gamma_0(N), \chi )$. 
\begin{theorem}[{{\cite[Theorem~1.64]{wom}}}]\label{t2.1}
	If $ f(z)= \prod_{\delta \mid N} \eta(\delta z)^{r_\delta}$ is an eta-quotient with
	$k= \frac{1}{2} \sum_{\delta \mid N}{r_\delta} \in \mathbb{Z}$, and satisfies the following additional properties:
	\begin{align}\label{2.3}
		\sum_{\delta\mid N} \delta {r_\delta} \equiv 0 \pmod {24},
	\end{align}
	\begin{align}\label{2.4}
		\sum_{\delta \mid N} \frac{N}{\delta}  {r_\delta} \equiv 0 \pmod {24},
	\end{align}
	then $f(z)$ satisfies 
	\begin{align*}
		f\left(\dfrac{az+b}{cz+d}\right)=\chi(d)(cz+d)^kf(z)
	\end{align*} 
	for every $\begin{bmatrix}
		a & b \\
		c & d
	\end{bmatrix}\in \Gamma_0(N)$, where the character $\chi$
	is defined by
	$\chi (d) := \bigg( \frac{(-1)^k \prod_{\delta \mid N} \delta^{r_\delta}}{d} \bigg).$
\end{theorem}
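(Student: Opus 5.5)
We would derive the transformation law directly from the transformation formula for Dedekind's $\eta$-function under $SL_2(\mathbb{Z})$, applied to each factor $\eta(\delta z)$ separately. Fix $\gamma=\begin{bmatrix} a&b\\ c&d\end{bmatrix}\in\Gamma_0(N)$; we may assume $c>0$, since $c=0$ means $\gamma=\pm\begin{bmatrix}1&b\\0&1\end{bmatrix}$, and then the claim follows from $q$-expansion periodicity together with \eqref{2.3}. For each $\delta\mid N$ we have $\delta\mid c$, so
\begin{align*}
\delta\,\gamma z=\frac{a(\delta z)+b\delta}{(c/\delta)(\delta z)+d}=\gamma_\delta(\delta z),\qquad \gamma_\delta:=\begin{bmatrix} a& b\delta\\ c/\delta & d\end{bmatrix}\in SL_2(\mathbb{Z}).
\end{align*}
Hence $\eta(\delta\gamma z)=\epsilon(\gamma_\delta)\,\bigl(-i((c/\delta)\delta z+d)\bigr)^{1/2}\eta(\delta z)=\epsilon(\gamma_\delta)\,(-i(cz+d))^{1/2}\eta(\delta z)$. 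The automorphy factor is therefore the same for every $\delta$. Taking the product with exponents $r_\delta$ gives
\begin{align*}
f(\gamma z)=\Bigl(\prod_{\delta\mid N}\epsilon(\gamma_\delta)^{r_\delta}\Bigr)(-i(cz+d))^{k}f(z),
\end{align*}
using $k=\frac12\sum r_\delta\in\mathbb{Z}$. So everything reduces to evaluating the product of eta-multipliers.

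For the multipliers we would use the explicit Petersson--Knopp formula. In the version valid for $d$ odd, $c>0$, it reads
\begin{align*}
\epsilon\begin{bmatrix} a&b\\ c&d\end{bmatrix}=\Bigl(\frac{c}{d}\Bigr)e^{\pi i\,[(a+d)c-bd(c^2-1)-3c]/12}
\end{align*}
up to the standard sign normalisation. The companion formula for $c$ odd has the Jacobi symbol $\bigl(\frac{d}{c}\bigr)$. We would split into cases: if $N$ is even then $d$ is odd and the first formula applies to all $\gamma_\delta$. If $N$ is odd, we either use the $c$-odd version when possible, or reduce to $d$ odd by replacing $\gamma$ with $\gamma\begin{bmatrix}1&1\\0&1\end{bmatrix}^t$ times a translation (which does not change the character value). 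We plug in $\gamma_\delta$, with entries $(a,b\delta,c/\delta,d)$. The exponent in the product then becomes
\begin{align*}
\frac{\pi i}{12}\Bigl[(a+d)c\sum_\delta\frac{r_\delta}{\delta}-bd\sum_\delta r_\delta\bigl(\tfrac{c^2}{\delta}-\delta\bigr)-3c\sum_\delta\frac{r_\delta}{\delta}\Bigr].
\end{align*}
Writing $c=Nc'$, the terms $c\sum r_\delta/\delta=c'\sum (N/\delta)r_\delta$ are divisible by $24$ by \eqref{2.4}. The term $bd\sum\delta r_\delta$ is divisible by $24$ by \eqref{2.3}. The remaining term $bdc^2\sum r_\delta/\delta$ is handled the same way via \eqref{2.4}. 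So all the roots of unity cancel, and only the Jacobi symbols remain.

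Finally we collect the symbols: $\prod_\delta\bigl(\frac{c/\delta}{d}\bigr)^{r_\delta}=\bigl(\frac{c}{d}\bigr)^{2k}\prod_\delta\bigl(\frac{\delta}{d}\bigr)^{r_\delta}$, where we use $\bigl(\frac{\delta}{d}\bigr)^{-1}=\bigl(\frac{\delta}{d}\bigr)$ since $\gcd(\delta,d)=1$. This equals $\bigl(\frac{\prod\delta^{r_\delta}}{d}\bigr)$ because $2k$ is even. Combining with $(-i)^k$ and the sign conventions of $\epsilon$ (which depend on the sign of $d$) produces the factor $\bigl(\frac{(-1)^k}{d}\bigr)$, giving $\chi(d)$ as stated.

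The main obstacle is this bookkeeping of eighth roots of unity and signs. This includes getting the correct branch of $(-i(cz+d))^{1/2}$ and the $(-1)^k$ twist, and treating the case of odd $N$, where $d$ can be even and the other form of the multiplier must be used. We would handle it by first checking the formula on the generators of the relevant cases, i.e.\ $d$ odd with $c>0$ and $c<0$ related through $-\gamma$, and only then assembling the general statement.
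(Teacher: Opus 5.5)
The paper gives no proof of this statement; it quotes it from Ono's book (Theorem~1.64, going back to Gordon--Hughes and Newman). Your route is the standard proof of that result: apply the $\eta$ transformation law to each factor via $\gamma_\delta$, note that the automorphy factor $(-i(cz+d))^{1/2}$ does not depend on $\delta$, and reduce everything to $\prod_\delta\epsilon(\gamma_\delta)^{r_\delta}$. That reduction and your handling of the Jacobi symbols are fine.

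\textbf{Main gap: the root of unity.} The gap is in the root of unity, and that is exactly where the $(-1)^k$ in $\chi$ comes from.

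The multiplier you display for $d$ odd is wrong, and not merely up to a sign: $-3c$ is the exponent of the $c$-odd formula. For $c>0$, $d$ odd, and your normalisation, one has $\epsilon(\gamma)=\left(\frac{c}{|d|}\right)e^{\pi i[(a+d)c-bd(c^2-1)+3d-3cd]/12}$. This is Knopp's exponent $+3d-3-3cd$ if one writes $(cz+d)^{1/2}$ instead.

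With your formula the exponents do all cancel, but what remains is $f(\gamma z)=(-i)^k\left(\frac{\prod_\delta\delta^{r_\delta}}{d}\right)(cz+d)^kf(z)$, which is false. For $\eta^6(4z)$ on $\Gamma_0(16)$ (the form used later in the paper, $k=3$), it gives the constant $i$ instead of $\left(\frac{-1}{d}\right)$.

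Your closing appeal to sign conventions depending on the sign of $d$ cannot repair this. For $d>0$ no sign convention is in play, yet $\left(\frac{-1}{d}\right)^k=(-1)^{k(d-1)/2}$ depends on $d$ modulo $4$. So it must come from a term of the multiplier that depends on $d$ alone, namely $3d$. With the correct exponent:
\begin{itemize}
\item the pieces $(a+d)c\sum_\delta r_\delta/\delta$, $bdc^2\sum_\delta r_\delta/\delta$ and $3cd\sum_\delta r_\delta/\delta$ vanish modulo $24$ by \eqref{2.4};
\item the piece $bd\sum_\delta\delta r_\delta$ vanishes modulo $24$ by \eqref{2.3};
\item the $3d$ term survives as $e^{\pi i\cdot 3d\cdot 2k/12}=i^{dk}$.
\end{itemize}
Hence $(-i)^k\, i^{dk}=i^{k(d-1)}=\left(\frac{-1}{d}\right)^k$ for every odd $d$ (Kronecker symbol), as required.

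\textbf{Secondary gap: the case $d$ even.} This case (possible only for odd $N$) also needs repair.
\begin{itemize}
\item Your matrix $\gamma\begin{bmatrix}1&1\\0&1\end{bmatrix}^t=\gamma\begin{bmatrix}1&0\\1&1\end{bmatrix}$ has lower-left entry $c+d$, so it leaves $\Gamma_0(N)$, and it does not even change $d$.
\item What works is right multiplication by $T=\begin{bmatrix}1&1\\0&1\end{bmatrix}$ alone. Since $d$ even forces $c$ odd, $\gamma T$ has odd lower-right entry $c+d$, and $f(z+1)=f(z)$ gives $f(\gamma z)=\chi(c+d)(cz+d)^kf(z)$.
\item Your phrase ``does not change the character value'' is then the claim $\chi(c+d)=\chi(d)$. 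That is, the Kronecker character $\left(\frac{(-1)^k\prod_\delta\delta^{r_\delta}}{\cdot}\right)$ must be periodic modulo $N$, and this needs proof.
\item It is true here. Reducing \eqref{2.3} modulo $4$ gives $\sum_{\delta\equiv 3\,(\mathrm{mod}\,4)}r_\delta\equiv k\pmod 2$. Hence $(-1)^k\prod_\delta\delta^{|r_\delta|}\equiv 1\pmod 4$, so the character has odd conductor dividing $N$.
\end{itemize}
Alternatively, apply the $c$-odd formula to every $\gamma_\delta$, since all $c/\delta$ are odd. The roots of unity then cancel completely, and the $(-1)^k$ arises instead from quadratic reciprocity when converting $\prod_\delta\left(\frac{d}{c/\delta}\right)^{r_\delta}$ into $\left(\frac{(-1)^k\prod_\delta\delta^{r_\delta}}{d}\right)$.
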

To confirm the holomorphicity of $f(z)$ at cusps of $\Gamma_0(N)$, it is enough to verify that the orders at the cusps are non-negative. The next theorem of Ligozat \cite{ligozat} gives the sufficient condition to evaluate orders of an eta-quotient at cusps.
\begin{theorem}[{{\cite[Theorem~1.65]{wom}}}]\label{t2.2}
	Let $c,d,$ and $N$ be positive integers with $d\mid N$ and $gcd(c,d)=1$. If $f(z)$ is an eta-quotient satisfying the conditions of Theorem \ref{t2.1} for $N$, then the order of vanishing of $f(z)$ at the cusp $\frac{c}{d}$ is 
	\begin{align*}
		\dfrac{N}{24}\sum_{\delta\mid N} \frac{gcd(d,\delta)^2 r_\delta}{ gcd(d,\frac{N}{d})d\delta}.		
	\end{align*}
\end{theorem}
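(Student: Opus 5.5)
The statement is a bookkeeping computation. I will expand the eta-quotient at the cusp $\frac{c}{d}$ by moving the cusp to $\infty$ with an element of $SL_2(\mathbb{Z})$. Then I will read off the leading exponent of each factor $\eta(\delta z)$. Finally I will rescale by the width of the cusp, so the order is measured in the correct local parameter for $\Gamma_0(N)$.

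\emph{Step 1: move the cusp to $\infty$.} Since $\gcd(c,d)=1$, choose $\gamma=\begin{bmatrix} c & b\\ d & e\end{bmatrix}\in SL_2(\mathbb{Z})$, so that $\gamma\infty=\frac{c}{d}$. For each $\delta\mid N$, consider $\begin{bmatrix}\delta&0\\0&1\end{bmatrix}\gamma=\begin{bmatrix}\delta c&\delta b\\ d&e\end{bmatrix}$. Put $A=\gcd(d,\delta)$ and $D=\delta/A$. Because $\gcd(\delta c,d)=A$, there is $\gamma'\in SL_2(\mathbb{Z})$ with
\begin{align*}
\begin{bmatrix}\delta c&\delta b\\ d&e\end{bmatrix}=\gamma'\begin{bmatrix}A&B\\0&D\end{bmatrix}
\end{align*}
for some integer $B$; the determinant of the upper triangular factor is $AD=\delta$. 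The eta transformation law under $SL_2(\mathbb{Z})$ then gives
\begin{align*}
\eta(\delta\gamma z)=\varepsilon\,(\text{automorphy factor})\,\eta\!\left(\tfrac{Az+B}{D}\right),
\end{align*}
where $\varepsilon$ is a $24$th root of unity. From \eqref{2.1}, the right-hand side has a $q$-expansion starting with $e^{2\pi i(Az+B)/(24D)}$. Its leading term is therefore a nonzero constant times $q^{A/(24D)}=q^{\gcd(d,\delta)^2/(24\delta)}$, with $q=e^{2\pi i z}$.

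\emph{Step 2: multiply the factors.} Raising to the powers $r_\delta$ and multiplying, $f(\gamma z)$, up to the automorphy factor of weight $k$ and a nonzero constant, has leading term $q^{E}$, where
\begin{align*}
E=\frac{1}{24}\sum_{\delta\mid N}\frac{\gcd(d,\delta)^2 r_\delta}{\delta}.
\end{align*}
The roots of unity and the constants $e^{2\pi i B/(24D)}$ never vanish, so they do not affect the order.

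\emph{Step 3: normalize by the cusp width.} The order at a cusp must be measured in the local uniformizer $q_h=e^{2\pi i z/h}$, where $h$ is the width of $\frac{c}{d}$ in $\Gamma_0(N)$. I compute $h$ as the least $h>0$ with $\gamma\begin{bmatrix}1&h\\0&1\end{bmatrix}\gamma^{-1}\in\pm\Gamma_0(N)$. The lower-left entry of this conjugate is $-d^2h$, so the condition is $N\mid d^2h$, which gives
\begin{align*}
h=\frac{N}{\gcd(d^2,N)}=\frac{N}{d\,\gcd(d,N/d)},
\end{align*}
using $d\mid N$. The order in $q_h$ is then $hE$, which is exactly the expression in the statement.

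I expect Step 3 to be the main obstacle, together with checking in Step 1 that the triangular factor can be chosen with determinant $\delta$ and upper-left entry exactly $\gcd(d,\delta)$. For the latter I would use an explicit Euclidean-algorithm construction of $\gamma'$. For the width I would reduce to the case of prime powers to confirm the identity $\gcd(d^2,N)=d\gcd(d,N/d)$ when $d\mid N$.
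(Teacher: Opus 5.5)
Your proof is correct. The paper gives no proof of this statement; it only quotes it from Ono's book (Theorem 1.65, due to Ligozat). Your argument is the standard derivation that the citation stands in for: move $\frac{c}{d}$ to $\infty$, factor each $\begin{bmatrix}\delta&0\\0&1\end{bmatrix}\gamma$ as $\gamma'$ times an upper triangular matrix, read off the exponent $\gcd(d,\delta)^2/(24\delta)$, and rescale by the width $h$. The citation buys brevity. Your derivation makes explicit two things the formula leaves implicit. First, the order is measured in the uniformizer $e^{2\pi i z/h}$ of $\Gamma_0(N)$ at $\frac{c}{d}$. Second, the value is exact rather than a bound, because each factor $\prod_{n\ge1}(1-e^{2\pi i n(Az+B)/D})^{r_\delta}$ has constant term $1$.

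\emph{The obstacles you flag are routine.}
\begin{itemize}
\item Triangular factorization: if $x\delta c+yd=A=\gcd(d,\delta)$, then $\begin{bmatrix}x&y\\-d/A&\delta c/A\end{bmatrix}\in SL_2(\mathbb{Z})$. Multiplying $\begin{bmatrix}\delta c&\delta b\\ d&e\end{bmatrix}$ on the left by it gives an upper triangular matrix with diagonal $A$ and $\delta/A$; the latter is forced by the determinant.
\item Width: $\gcd(d^2,N)=\gcd(d\cdot d,\,d\cdot\frac{N}{d})=d\gcd(d,\frac{N}{d})$ directly, so no reduction to prime powers is needed.
\end{itemize}

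\emph{One step should be checked rather than asserted:} that the separate eta automorphy factors assemble into $(dz+e)^k$. This follows from the cocycle relation. The lower row $(c',d')$ of $\gamma'$ satisfies $c'\frac{Az+B}{D}+d'=\frac{dz+e}{D}$. Hence the product over $\delta$ is a nonzero constant times $(dz+e)^{\frac12\sum_\delta r_\delta}=(dz+e)^k$. Therefore $(dz+e)^{-k}f(\gamma z)$ is a nonzero constant times $\prod_\delta\eta((A_\delta z+B_\delta)/D_\delta)^{r_\delta}$, which is exactly what your Step 2 uses.
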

The following are the definitions of the Hecke operator and Hecke eigenform which play a vital role in our proof.
\begin{definition}[{{\cite[Definition~2.1]{wom}}}]
	Let $m$ be a positive integer and $f(z) = \sum_{n=0}^ \infty a(n)q^n \in  M_k(\Gamma_0(N), \chi ).$ The Hecke operator $T_m$ acts on $f(z)$ by 
	\begin{align}
		f(z)\mid {T_m} := \sum_{n=0}^\infty \bigg( \sum_{d\mid gcd(n,m)} \chi (d) d^{k-1}a \bigg(\frac{nm}{d^2} \bigg) \bigg)q^n.
	\end{align}
	In particular, if $m=p$ is a prime, then 
	\begin{align}\label{2.7}
		f(z)\mid {T_p} := \sum_{n=0}^\infty \bigg( a(pn)+ \chi (p) p^{k-1}a \bigg(\frac{n}{p} \bigg) \bigg)q^n.
	\end{align}
	We adopt the convention that $a(n/p)=0$ whenever $p\nmid n$.
\end{definition}
\begin{definition}[{{\cite[Definition~2.5]{wom}}}] 
A modular form $f(z)\in M_k(\Gamma_0(N), \chi)$ is called a Hecke eigenform if for every $m\ge 2$ there is a complex number $\lambda(m)$ for which
\begin{align}
f(z)\mid T_m=\lambda(m)f(z).
\end{align}
\end{definition}
	We now recall the definition of Legendre symbol required for the results that follow. Let $p$ be any odd prime and $n$ be any integer relatively prime to $p$. The Legendre symbol $\left(\dfrac{n}{p}\right)_L$ is defined by
\[
\left( \frac{n}{p} \right)_L :=
\begin{cases}
	\hphantom{-}1, & \text{if } n \text{ is a quadratic residue modulo }p\\[4pt]
	-1, & \text{if } n \text{ is a non-quadratic residue modulo } p.
\end{cases}
\]
Clearly, $\left(\frac{n}{p}\right)_L=0$ for $p|n$.
We next state the following results of Newman which provides an essential argument to prove our theorems.
\begin{lemma}[{{\cite[Theorem~1]{newman53}}}]\label{lnew53}
	Let $p_r(n)$ be defined by 
	\begin{align*}
		\sum_{n=0}^{\infty} p_r(n)q^n= f_1^r
	\end{align*}
	and $r$ be an even integer with $0<r\le 24$. If $p$ is a prime such that $r(p-1)\equiv 0\pmod {24}$ and $\delta =\dfrac{r(p-1)}{24}$, then the following identity holds
	\begin{align*}
		p_r(np+\delta)=p_r(\delta)p_r(n)-p^{(r/2)-1}p_r\left(\dfrac{n-\delta}{p}\right).
	\end{align*}
\end{lemma}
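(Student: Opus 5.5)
The plan is to realize $\sum p_r(n)q^n$ as the Fourier expansion of a Hecke eigenform in a one-dimensional space, and then read off the identity by comparing coefficients. Set $F(z)=\eta(z)^r=\sum_{n\ge0}p_r(n)q^{n+r/24}$. Since $r$ is even, $k=r/2$ is an integer. $F$ is a holomorphic cusp form of weight $k$ on $SL_2(\mathbb{Z})$, with the multiplier system $v_\eta^r$, which is a character of $SL_2(\mathbb{Z})$ of order dividing $24/\gcd(r,24)$. Equivalently, $F(24z/g)$ with $g=\gcd(r,24)$ is an eta-quotient. Its level and character can be read from Theorem~\ref{t2.1}, and Theorem~\ref{t2.2} gives its holomorphy at the cusps, but I would argue directly on $SL_2(\mathbb{Z})$ with the multiplier.

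\textbf{Step 1: dimension one.} Let $G$ be any holomorphic modular form of weight $k$ with multiplier $v_\eta^r$ that vanishes at $\infty$. Its $q$-expansion lies in $q^{r/24}\mathbb{C}[[q]]$, so $G/F$ is a weight-$0$ function invariant under all of $SL_2(\mathbb{Z})$. It is holomorphic on $\mathbb{H}$ because $\eta$ never vanishes, and it is bounded at $\infty$. Hence $G/F$ is a holomorphic modular function and therefore constant. So this space is $\mathbb{C}F$. For $r=24$ it is simply $S_{12}(SL_2(\mathbb{Z}))=\mathbb{C}\Delta$.

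\textbf{Step 2: $T_p$ preserves the space.} The hypothesis $r(p-1)\equiv0\pmod{24}$ says exactly that $p\equiv1$ modulo the order of $v_\eta^r$. In that case the usual double-coset operator $T_p$ (sum over $\begin{bmatrix}1&b\\0&p\end{bmatrix}$, $0\le b<p$, together with $\begin{bmatrix}p&0\\0&1\end{bmatrix}$) is well defined on forms with multiplier $v_\eta^r$ and maps cusp forms to cusp forms. On expansions $\sum a(m)q^m$ with $m\in r/24+\mathbb{Z}$ it acts by
\[
a(m)\mapsto a(pm)+p^{k-1}a(m/p),
\]
with no extra character factor. This is the step I expect to be the main obstacle. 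One must check carefully that the multiplier is unchanged, which uses $v_\eta^r$ evaluated on conjugated matrices together with $p\equiv1$ mod its order. One must also check that the exponents $pm$ and $m/p$ stay in the class $r/24+\mathbb{Z}$, which follows from $pr/24-r/24=\delta\in\mathbb{Z}$. To make this rigorous I would first pass to the integral-level eta-quotient $F(24z/g)$ and apply the standard $T_p$ of the paper's definition (2.7). There I would confirm that the character $\chi(p)$ equals $1$ and that the index rescaling reproduces the formula above.

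\textbf{Step 3: compare coefficients.} By Steps 1 and 2 there is a constant $\lambda$ with $F\mid T_p=\lambda F$. Take $m=n+r/24$. Then
\[
pm=(pn+\delta)+\tfrac{r}{24},\qquad \frac{m}{p}=\frac{n-\delta}{p}+\tfrac{r}{24}.
\]
Comparing coefficients of $q^{n+r/24}$ therefore gives
\[
\lambda\,p_r(n)=p_r(pn+\delta)+p^{r/2-1}p_r\!\left(\tfrac{n-\delta}{p}\right),
\]
where the last term vanishes unless $p\mid n-\delta$. Setting $n=0$ makes the last term vanish, since $\delta<p$, and gives $\lambda=p_r(\delta)$. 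Rearranging yields the stated identity.
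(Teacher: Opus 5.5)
The paper gives no proof of this lemma; it quotes it from Newman's 1953 paper. Your strategy is the classical argument for Newman's identity: $\eta^r$ spans the space of weight $k=r/2$ cusp forms on $SL_2(\mathbb{Z})$ with multiplier $\psi:=v_\eta^r$; this space is $T_p$-stable when $p\equiv 1\pmod{N_0}$, where $N_0:=24/\gcd(r,24)$; and $n=0$ gives eigenvalue $p_r(\delta)$. Your Steps 1 and 3 are correct as written.

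The gap is in Step 2, at the point you flagged. The integral-level route you propose does not give what Step 1 needs. Applying (2.7) to $G(z)=\eta(N_0z)^r\in S_k(\Gamma_0(N_0^2),\chi)$ and rescaling shows only that $H(z)=\sum_m\bigl(a(pm)+p^{k-1}a(m/p)\bigr)q^m$ transforms correctly under $T=\begin{pmatrix}1&1\\0&1\end{pmatrix}$ and under matrices with $b\equiv c\equiv 0\pmod{N_0}$. All of these lie in $\Gamma_0(N_0)$, so for $N_0>1$ the behaviour under $S=\begin{pmatrix}0&-1\\1&0\end{pmatrix}$ is never established, and Step 1 cannot be invoked. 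You also cannot switch to level $N_0^2$ instead: for $r=16$, the forms in $S_8(\Gamma_0(9))$ supported on exponents $\equiv 2\pmod 3$ already span at least two dimensions. So the double coset $SL_2(\mathbb{Z})\alpha SL_2(\mathbb{Z})$, with $\alpha=\begin{pmatrix}1&0\\0&p\end{pmatrix}$, must be handled at level one.

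For that, ``$p\equiv 1$ modulo the order of $\psi$'' is not enough; what you need is that $\psi$ is trivial on $\Gamma(N_0)$. Your eta-quotient supplies exactly this. Since $G\in S_k(\Gamma_0(N_0^2),\chi)$, we get $\psi(\gamma)=\chi(d)$ whenever $b\equiv c\equiv 0\pmod{N_0}$. Moreover $\chi(d)=1$ for $d\equiv 1\pmod{N_0}$, because $\chi$ is trivial when $k$ is even and $4\mid N_0$ when $k$ is odd.

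With this in hand the double-coset step closes. For $\gamma=\begin{pmatrix}a&b\\c&d\end{pmatrix}\in\Gamma_0(p)$ one has $\alpha^{-1}\gamma\alpha=\begin{pmatrix}a&bp\\c/p&d\end{pmatrix}\equiv\gamma\pmod{N_0}$, hence $\psi(\alpha^{-1}\gamma\alpha)=\psi(\gamma)$. So $\gamma_1\alpha\gamma_2\mapsto\psi(\gamma_1)\psi(\gamma_2)$ is a well-defined extension of $\psi$ to the double coset. Take the representatives $\alpha T^{N_0 j}$ for $0\le j<p$ (a full residue system mod $p$, since $\gcd(N_0,p)=1$) together with $S\alpha S^{-1}=\begin{pmatrix}p&0\\0&1\end{pmatrix}$. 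All extension values are then $1$, so the operator is exactly your coefficient map $a(m)\mapsto a(pm)+p^{k-1}a(m/p)$. It preserves the one-dimensional space of Step 1, which completes the argument.
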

\begin{lemma}[{{\cite[Theorem~3]{newman55}}}]\label{lnew55}
Suppose that $r$ is one of the numbers $2,4,6,8,10, 14, 26$. Let $p\ge 5$ be a prime such that $r(p+1)\equiv0\pmod{24}$. Let $\Delta=r(p^2-1)/24$, and define $p_r(\alpha)=0$, if $\alpha$ is not a non-negative integer. Then 
\begin{align}
p_r(np+\Delta)=(-p)^{(r/2)-1}p_r(n/p).
\end{align}
\end{lemma}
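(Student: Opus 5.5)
Since this is Newman's result, quoted from \cite{newman55}, I would prove it by recasting it as a statement about the Hecke action on an eta-power. Put $k=r/2$ and
\begin{align*}
F_r(z):=\eta(24z)^r=\sum_{n=0}^{\infty}p_r(n)q^{24n+r}=:\sum_{N\ge 1}A(N)q^N ,
\end{align*}
so that $A(24n+r)=p_r(n)$ and $A(N)=0$ unless $N\equiv r\pmod{24}$.

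\textbf{Modularity.} Theorem \ref{t2.1} and Theorem \ref{t2.2} show that $F_r\in M_k(\Gamma_0(576),\chi)$ with $\chi(d)=\left(\frac{(-1)^k 24^r}{d}\right)$. The conditions \eqref{2.3}--\eqref{2.4} hold since $24r\equiv 0$ and $\frac{576}{24}r\equiv0\pmod{24}$. Since $F_r$ is an eta-power, its orders at all cusps are nonnegative.

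\textbf{Rewriting the index.} With $\Delta=r(p^2-1)/24$ we have
\begin{align*}
24(np+\Delta)+r=24np+rp^2=p\,(24n+rp),
\end{align*}
so $p_r(np+\Delta)=A\big(p(24n+rp)\big)$. The hypothesis $r(p+1)\equiv0\pmod{24}$ gives $rp\equiv -r\pmod{24}$. Hence $p\cdot(24n+rp)$ lies in the support class $N\equiv r\pmod{24}$, while $24n+rp$ by itself does not, since $-r\not\equiv r\pmod{24}$ for the listed $r$. The identity therefore reduces to the vanishing statement
\begin{align}\label{lam0}
A(pM)+\chi(p)p^{k-1}A(M/p)=0 \quad\text{for all } M,
\end{align}
which by \eqref{2.7} is exactly $F_r\mid T_p=0$.

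\textbf{Deriving the formula from \eqref{lam0}.} If $p\nmid n$, then $M=24n+rp$ is prime to $p$, so \eqref{lam0} gives $A(pM)=0$, matching $p_r(n/p)=0$. If $p\mid n$, write $M=p\,(24(n/p)+r)$; then \eqref{lam0} gives
\begin{align*}
p_r(np+\Delta)=-\chi(p)p^{k-1}A\big(24(n/p)+r\big)=-\chi(p)p^{k-1}p_r(n/p).
\end{align*}
It remains to check $-\chi(p)=(-1)^{k-1}$, i.e. $\chi(p)=(-1)^k$. Here $24^r$ is a square because $r$ is even, so $\chi(p)=\left(\frac{(-1)^k}{p}\right)$. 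For odd $k$ the condition $r(p+1)\equiv 0\pmod{24}$ forces $p\equiv 3\pmod 4$, so $\chi(p)=\left(\frac{-1}{p}\right)=-1=(-1)^k$. For even $k$ we get $\chi(p)=1=(-1)^k$. Either way the sign is $(-p)^{k-1}$.

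\textbf{Main obstacle: proving $F_r\mid T_p=0$.} For $r\in\{2,4,6,8,10,14,26\}$ these are exactly Serre's lacunary eta-powers, and $F_r$ has complex multiplication. I would first write $F_r$ explicitly as a theta series attached to a Hecke Grössencharacter of an imaginary quadratic field $K$. For example, $r=2$ gives $\mathbb{Q}(\sqrt{-1})$ and $r=4,8$ give $\mathbb{Q}(\sqrt{-3})$, via the classical binary quadratic form expansions of $\eta^r$. Then $F_r$ is a normalized eigenform, and its coefficients vanish at primes inert in $K$.

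The congruence $p\equiv -1\pmod{24/\gcd(r,24)}$ should force $p$ to be inert in $K$. Then $\lambda(p)=A(p)=0$, and $T_pF_r=\lambda(p)F_r=0$.

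If I cannot produce the explicit theta series in every case, I would fall back on a direct argument. The form $F_r\mid T_p$ is supported on exponents $\equiv rp\equiv -r\pmod{24}$, but it must also twist like $F_r$, i.e. lie in the same $\chi$-isotypic piece supported on $N\equiv r\pmod{24}$. These two supports are disjoint, which forces $F_r\mid T_p=0$. Making this rigorous requires the $q^{1/24}$-twist structure, and I expect it to be the delicate step.
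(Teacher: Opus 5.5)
Your reduction is correct: the identity $24(np+\Delta)+r=p(24n+rp)$, the support argument, and the check $\chi(p)=(-1)^k$ show that the lemma is equivalent to $F_r\mid T_p=0$. The paper gives no proof; it only cites Newman. But this vanishing is the entire content of the theorem, and neither of your routes establishes it.

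\textbf{Main route.} You claim $F_r$ is a normalized eigenform with $\lambda(p)=A(p)=0$. This does not work.
\begin{itemize}
\item $F_r=q^r+\cdots$ has $A(1)=0$, so you cannot read off $\lambda(p)=A(p)$. In fact $A(p)=0$ for every prime $p\ge5$ just because $r$ is even, so this step carries no information.
\item For $r=10,14,26$, $F_r$ is not a Hecke eigenform at all. The coefficient of $q^2$ in $F_{10}\mid T_5$ is $A(10)=p_{10}(0)=1$, while $A(2)=0$. The same happens with $T_7$ for $r=14$ and $T_{13}$ for $r=26$.
\item For $r\in\{2,4,6,8\}$ you can repair the argument. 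Use the normalized newform $g_r(z)=\eta(\tfrac{24}{r}z)^r$ from Martin's list, with $F_r(z)=g_r(rz)$. Its $p$-th coefficient vanishes because $g_r$ is supported on exponents $\equiv1\pmod{24/r}$ while $p\equiv-1\pmod{24/r}$. No CM input is needed.
\item For $r=10,14,26$ you would need Serre's theorem writing $F_r$ as a combination of CM eigenforms for fields in which $p$ is inert, then argue by linearity. You have not supplied this, and it is a deep input.
\end{itemize}

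\textbf{Fallback.} This argument fails outright. $T_p$ does not preserve the eigenspaces of $z\mapsto z+\tfrac1{24}$; it sends the residue class $r$ to the class $rp$. So nothing forces $F_r\mid T_p$ to be supported on $N\equiv r\pmod{24}$. Your reasoning uses only $12\nmid r$ and $r(p+1)\equiv0\pmod{24}$, so it would equally give $F_{16}\mid T_5=0$. Yet the coefficient of $q^{8}$ in $F_{16}\mid T_5$ is $A(40)=p_{16}(1)=-16\neq0$.

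\textbf{Missing idea.} Work at level one with the eta multiplier system $v_\eta$.
\begin{itemize}
\item $T_p$ maps weight-$k$ forms on $SL_2(\mathbb{Z})$ with multiplier $v_\eta^{r}$ to forms with multiplier $v_\eta^{rp}$. The hypothesis $r(p+1)\equiv0\pmod{24}$ says exactly that $v_\eta^{rp}=v_\eta^{-r}$.
\item Hence $G(z):=(F_r\mid T_p)(z/24)$ satisfies $G\eta^r\in M_r(SL_2(\mathbb{Z}))$.
\item The exponents of $G$ are nonnegative and lie in $-\tfrac{r}{24}+\mathbb{Z}$. So $\mathrm{ord}_\infty(G\eta^r)\ge1$, and $\ge2$ when $r=26$.
\item In the valence formula $\mathrm{ord}_\infty+\tfrac12\mathrm{ord}_i+\tfrac13\mathrm{ord}_\rho+\sum_P\mathrm{ord}_P=\tfrac{r}{12}$, the remainder $\tfrac{r}{12}-\mathrm{ord}_\infty$ is then negative for $r\le10$. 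For $r=14,26$ it is either negative or equal to $\tfrac16$.
\item $\tfrac16$ is not of the form $\tfrac a2+\tfrac b3+c$ with integers $a,b,c\ge0$. So the valence formula cannot hold for a nonzero form, and $G=0$.
\end{itemize}
This is exactly where the list $2,4,6,8,10,14,26$ comes from. For $r=16,18,20,22$ the remainders $\tfrac13,\tfrac12,\tfrac23,\tfrac56$ are attainable, consistent with the failure at $r=16$.
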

\begin{lemma}[{{\cite[Theorem~3]{newman59}}}]\label{lnew59}
Let $p$ and $q$ be distinct primes. The values of non-zero integers $r$ and $s$ are those given in the table \cite{newman59}, where with an entry $(r,s)$ we must also include $(s,r)$, and $m$ is an integer such that $p$ satisfies $p\equiv 1\pmod m$. We set
	\begin{align*}
	\phi(\tau)&=\prod_{n=1}^{\infty}(1-x^n)^r(1-x^{nq})^s=\sum_{n=0}^{\infty}c(n)x^n,\\
	(\varepsilon, t)&= \displaystyle\left(\dfrac{(r+s)}{2}, \dfrac{(r+sq)}{24}\right),\\
	\delta&=t(p-1).
\end{align*}
For these values, the coefficients $c(n)$ of $\phi(\tau)$ satisfy 
\begin{align*}
c\left(np+\delta\right)=\beta c(n)-\gamma p^{\varepsilon-1}c\left(\dfrac{n-\delta}{p}\right),
\end{align*}
where
\begin{align*}
\gamma&= (-1)^{\varepsilon(p-1)/2}\left(\dfrac{q}{p}\right)^r
\end{align*}
and \[\beta=c(\delta)+\gamma p^{\varepsilon-1}c\left(\dfrac{-\delta}{p}\right)=\begin{cases}
c(\delta),& \delta>0\\[4pt]
1+\gamma p^{\varepsilon-1},& \delta=0.
\end{cases}\]

\end{lemma}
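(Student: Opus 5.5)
The statement is Newman's theorem. The natural proof realises $\phi$, suitably normalised, as a Hecke eigenform and reads off the recursion from the action of $T_p$ in \eqref{2.7}. Put
\[
F(z):=\eta(z)^r\eta(qz)^s=\sum_{n\ge 0}c(n)\,x^{n+t},\qquad x=e^{2\pi i z},\quad t=\frac{r+sq}{24},
\]
which has weight $\varepsilon=(r+s)/2$. When $t\in\mathbb{Z}$, we check \eqref{2.3} and \eqref{2.4} for level $N=q$ and apply Theorem \ref{t2.1}. This puts $F$ in weight $\varepsilon$ on $\Gamma_0(q)$ with character
\[
\chi(d)=\left(\frac{(-1)^{\varepsilon}q^{s}}{d}\right).
\]
Holomorphy at the cusps $\infty$ and $0$ follows from Theorem \ref{t2.2}. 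When $t\notin\mathbb{Z}$, we pass to $F(mz)$, with $m$ the denominator of $t$ (so $24\mid$ level times the relevant sums). Equivalently, we work with the eta multiplier system. In either case the hypothesis $p\equiv 1\pmod m$ is exactly what makes $\delta=t(p-1)$ an integer, so that $T_p$ maps the space of forms supported on exponents in $t+\mathbb{Z}$ to itself.

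The key step, and the main obstacle, is to show that $F$ is an eigenform of $T_p$. The table of admissible pairs $(r,s)$ is presumably the list for which the relevant space, namely weight $\varepsilon$, the given character, and expansions in $x^{t}\mathbb{C}[[x]]$ with leading term $x^t$, is one-dimensional (or spanned by $F$ after the cusp conditions). I would verify this case by case with a valence-formula count: the order of $F$ at the cusps, computed via Theorem \ref{t2.2}, exhausts the Sturm bound, which forces uniqueness. Since $T_p$ preserves the space, it then follows that $F\mid T_p=\lambda F$ for some $\lambda\in\mathbb{C}$.

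Granting this, we compare coefficients of $x^{n+t}$ in $F\mid T_p=\lambda F$ using \eqref{2.7}. The first term is $a(p(n+t))=c(pn+pt-t)=c(pn+\delta)$. The second term is $a((n+t)/p)=c\big((n+t)/p-t\big)=c\big((n-\delta)/p\big)$. Hence
\[
c(pn+\delta)=\lambda\,c(n)-\chi(p)\,p^{\varepsilon-1}\,c\!\left(\frac{n-\delta}{p}\right).
\]
Here $\chi(p)=(-1)^{\varepsilon(p-1)/2}\left(\frac{q}{p}\right)^s$. Since $r\equiv s\pmod 2$ (because $\varepsilon$ is an integer), this equals $\gamma$, and swapping $(r,s)\leftrightarrow(s,r)$ is harmless. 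Finally, set $n=0$ and use $c(0)=1$ to identify $\lambda=\beta=c(\delta)+\gamma p^{\varepsilon-1}c(-\delta/p)$. This equals $c(\delta)$ when $\delta>0$, since then $-\delta/p$ is negative, and equals $1+\gamma p^{\varepsilon-1}$ when $\delta=0$.
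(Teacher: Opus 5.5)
The paper gives no proof of this lemma; it is quoted from Newman and used only for $q=2$, $r=s=4$. Your outer layer is correct: once $F\mid T_p=\lambda F$ is known, the coefficient extraction, the identification $\chi(p)=\left(\frac{(-1)^{\varepsilon}q^{s}}{p}\right)=\gamma$ via $r\equiv s\pmod 2$, and $\lambda=\beta$ from $n=0$ all go through.

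The gap is the eigenform property, which is the entire content of the theorem, and the mechanism you propose for it does not work as stated. You raise the level to $N$ so that Theorems \ref{t2.1} and \ref{t2.2} apply to $F(mz)$. But the subspace of $S_\varepsilon(\Gamma_0(N),\chi)$ supported on exponents $\equiv mt \pmod m$ need not be one-dimensional. That the orders of $F(mz)$ ``exhaust the Sturm bound'' is true of every eta-quotient, since it has no zeros in $\mathbb{H}$, and it gives no uniqueness. Subtracting $cF(mz)$ raises the order at $\infty$ by $m$ but forces nothing at the other cusps.

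For instance, take $q=2$, $r=s=6$, where Newman's relation does hold: for $p=5$, $c(8)=c(3)c(1)=-276$ and $c(18)=c(3)^2-5^5=-1009$. Here $F(4z)=\eta^6(4z)\eta^6(8z)\in S_6(\Gamma_0(32))$ has order $3$ at all eight cusps. If $u$ is a Hauptmodul of $\Gamma_0(8)$ with its pole at $0$, then $u(4z)$ has simple poles at the four cusps of $\Gamma_0(32)$ above $0$. So $F(4z)u(4z)$ is another cusp form with the same character and the same support $x^{3}\mathbb{C}[[x^4]]$, not proportional to $F(4z)$. The level-raised setting is therefore not ``equivalent'' to the eta-multiplier setting, and it cannot deliver the eigenform property in general.

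The argument has to be run on $\Gamma_0(q)$ with the multiplier system $v_F$ of $F$. For any holomorphic $G$ with multiplier $v_F$, the exponents at $\infty$ and at $0$ lie in $t+\mathbb{Z}$ and $t'+\mathbb{Z}$, where $t'=(qr+s)/24$ is the order of $F$ at $0$. Then $G/F$ is a $\Gamma_0(q)$-invariant function with no poles when $t,t'<1$ (or $=1$, after restricting to forms vanishing at that cusp), hence constant.

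You then still need that $T_p$, for $p\equiv 1\pmod m$, preserves forms with multiplier $v_F$ on $\Gamma_0(q)$. Your remark about exponents in $t+\mathbb{Z}$ controls only the cusp $\infty$, and Theorem \ref{t2.1} does not supply this. That is the computation Newman actually carries out, in effect showing that $(\phi\mid T_p)/\phi$ is a pole-free modular function on $\Gamma_0(q)$.

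For the one instance the paper uses, $q=2$, $r=s=4$, your level-raised argument is complete. There $\eta^4(2z)\eta^4(4z)$ spans the one-dimensional space $S_4(\Gamma_0(8))$, so it is an eigenform of every $T_p$ with $p$ odd, and $\gamma=1$.
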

\begin{lemma}[{{\cite[Theorem~3]{newman62}}}]\label{lnew62}
	Let $p$ and $q$ be distinct primes. The values of integers $r$ and $s$ such that $rs\not=0$ and $r\not\equiv s\pmod2$ are those given in the table \cite{newman62}, where with the entry $(r,s)$ we must also include $(s,r)$. We set
	\begin{align*}
		\phi(\tau)&=\prod_{n=1}^{\infty}(1-x^n)^r(1-x^{nq})^s=\sum_{n=0}^{\infty}c(n)x^n,\\
		(\varepsilon, t)&= \displaystyle\left(\dfrac{(r+s)}{2}, \dfrac{(r+sq)}{24}\right),\\
		\Delta&=t(p^2-1).
	\end{align*}
	For these values, the coefficients $c(n)$ of $\phi(\tau)$ satisfy
	\begin{align*}
		c(np^2+\Delta)-\gamma c(n)+p^{2\varepsilon-2}c\displaystyle\left(\dfrac{n-\Delta}{p^2}\right)=0\;,
	\end{align*}
	where 
	\begin{align*}
		\gamma=p^{2\varepsilon-2}\alpha-\displaystyle\left(\frac{\theta}{p}\right)_L p^{\varepsilon-3/2}\displaystyle\left(\frac{n-\Delta}{p}\right)_L,
	\end{align*}
	$\theta=(-1)^{(1/2)-\varepsilon} 2q^s$, and
	$\alpha$ is a constant.
\end{lemma}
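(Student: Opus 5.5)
The plan is to realise $\phi$ as a half-integral weight eta-quotient and read the recurrence off the action of the Hecke operator $T_{p^2}$; this lemma is Newman's result, cited from \cite{newman62}, and the approach is essentially Shimura's.

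\textbf{Step 1: the eta-quotient and its weight.} Since $r\not\equiv s\pmod 2$, the form
\begin{align*}
G(z):=\eta(24z)^r\eta(24qz)^s=\sum_{n\ge0}c(n)x^{24n+r+sq},\qquad x=e^{2\pi i z},
\end{align*}
has half-integral weight $\varepsilon=(r+s)/2$. Only exponents $\equiv 24t\pmod{24}$ occur, where $24t=r+sq$. I will check, with the half-integral weight analogue of Theorem \ref{t2.1} (Gordon--Hughes--Newman with the theta multiplier), that $G\in M_{\varepsilon}(\Gamma_0(576q),\chi)$ for a quadratic character $\chi$ built from $q^s$ and $2$. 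Holomorphy at the cusps is a finite check of the orders given by Theorem \ref{t2.2}.

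\textbf{Step 2: apply $T_{p^2}$.} Write $\lambda=\varepsilon-\tfrac12$, so the weight is $\lambda+\tfrac12$. For $f=\sum a(N)x^N$, Shimura's operator gives the $N$-th coefficient of $f\mid T_{p^2}$ as
\begin{align*}
a(p^2N)+\chi^*(p)\Big(\frac{(-1)^{\lambda}N}{p}\Big)_L p^{\lambda-1}a(N)+\chi(p^2)p^{2\lambda-1}a(N/p^2),
\end{align*}
where $\chi^*$ is the twisted character. Here $p^{\lambda-1}=p^{\varepsilon-3/2}$ and $p^{2\lambda-1}=p^{2\varepsilon-2}$, which already match the powers of $p$ in the lemma.

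Substitute $N=24n+24t$. Then
\begin{align*}
p^2N=24(np^2+\Delta)+24t \quad\text{with } \Delta=t(p^2-1),
\end{align*}
and $N/p^2$ corresponds to the index $(n-\Delta)/p^2$. Also $\big(\tfrac{N}{p}\big)_L=\big(\tfrac{24(n+t)}{p}\big)_L$. Combining $\chi^*(p)$, $(-1)^\lambda$ and the factor $24$ into one Legendre symbol should produce $\big(\tfrac{\theta}{p}\big)_L\big(\tfrac{n-\Delta}{p}\big)_L$ with $\theta=(-1)^{1/2-\varepsilon}2q^s$. The reduction uses $n+t\equiv n-\Delta\cdot(\text{unit})\pmod p$, since $t\equiv -\Delta \pmod p$ up to the factor $p^2-1\equiv -1$.

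\textbf{Step 3: the eigenform property (main obstacle).} It remains to show $G\mid T_{p^2}=\lambda_p G$ for some constant $\lambda_p$. Once this holds, comparing coefficients gives the stated three-term relation, with $\gamma$ equal to $\lambda_p$ minus the middle term. The normalisation $\gamma=p^{2\varepsilon-2}\alpha-\cdots$ just renames the eigenvalue as $p^{2\varepsilon-2}\alpha$.

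Newman's table of admissible $(r,s)$ is exactly the list for which this works. For each entry I would first try a dimension argument. The operator $T_{p^2}$ preserves both the space and the subspace of forms supported on exponents $\equiv 24t\pmod{24}$, invariant under the corresponding twists. So I would show, via Sturm's bound or an explicit basis from the valence formula, that this subspace is one-dimensional, for instance because $G$ vanishes to high order at $\infty$ relative to the weight.

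If the dimension is larger for some entries, the fallback is to show directly that $G$ is a Hecke eigenform. One route is to identify $G$, through the Shimura lift, with a CM or theta-series form; another is to check finitely many coefficients of $G\mid T_{p^2}-\lambda_pG$ up to the Sturm bound. This case-by-case verification over the table is where I expect most of the work to lie. Steps 1 and 2 are routine bookkeeping.
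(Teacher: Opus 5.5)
\textbf{Verdict: genuine gap.} The paper gives no proof of this lemma; it quotes Newman. The part of Newman's argument that carries the weight is exactly what your Step 3 lacks.

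Your Steps 1--2 are sound for $p\nmid 6q$. The form $G(z)=\eta(24z)^r\eta(24qz)^s$ has character $\left(\frac{12}{\cdot}\right)\left(\frac{q}{\cdot}\right)^s$, so the middle term of Shimura's $T_{p^2}$ is $\left(\frac{(-1)^{\lambda}2q^s}{p}\right)_L\left(\frac{n-\Delta}{p}\right)_L p^{\varepsilon-3/2}$. Hence the lemma is equivalent to $G\mid T_{p^2}=p^{2\varepsilon-2}\alpha\, G$. But that eigenform property is the whole theorem, and your main plan for it fails.

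Let $V$ be the subspace of $M_\varepsilon(\Gamma_0(576q),\chi)$ supported on exponents $\equiv r+sq\pmod{24}$. After $z\mapsto z/24$, $V$ becomes the space of forms on $\Gamma_0(24q)$, not $\Gamma_0(q)$, with the multiplier of $\phi$, and this space is typically large. Take the paper's own case $(r,s,q)=(1,10,2)$. Here $\phi=\eta(\tau)\eta(2\tau)^{10}$ has order $12$ at the cusp $0$ of $\Gamma_0(48)$. So $\phi h$ lies in this space for every modular function $h$ on $\Gamma_0(48)$ whose only pole is at $0$, of order at most $12$. Since $X_0(48)$ has genus $3$, Riemann's inequality gives at least $10$ independent such $h$, so $\dim V\ge 10$.

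Note also that $G$ here has the \emph{lowest} order at $\infty$ that $V$ allows, not a high one, and a Sturm or valence count cannot detect a support condition. The fallbacks do not rescue this. A Sturm-bound check of $G\mid T_{p^2}-\lambda_pG$ must be redone for every $p$, so it can never cover all primes. Knowing the Shimura lift of $G$ does not by itself make $G$ an eigenform.

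The missing idea is to work on $\Gamma_0(q)$ with the full multiplier system $v$ of $\phi(\tau)=\eta(\tau)^r\eta(q\tau)^s$.
\begin{itemize}
\item On $\Gamma_0(q)$, $\phi$ has no zeros in $\mathbb{H}$, and its orders at the cusps $\infty$ and $0$ are $(r+sq)/24$ and $(qr+s)/24$.
\item For the entries of Newman's table these orders are small enough; for $(1,10,2)$ they are $21/24$ and $12/24$, both $<1$. Then for any $F$ of the same weight and multiplier, $F/\phi$ is a modular function on $\Gamma_0(q)$, holomorphic on $\mathbb{H}$ and at the cusps, hence constant. So this space is $\mathbb{C}\phi$.
\item What remains is to know that $T_{p^2}$ maps this space to itself. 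Newman constructs the operator directly on $\Gamma_0(q)$. In your framework you would have to show, by a double-coset argument for $p\nmid 6q$, that Shimura's $T_{p^2}$ preserves the part of $V$ consisting of forms that are modular on all of $\Gamma_0(q)$ after rescaling.
\end{itemize}
Then $\phi\mid T_{p^2}=\lambda_p\phi$, and your Step 2 finishes the proof.

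A separate point: your Step 2 only covers $p\nmid 6q$. The lemma as stated, and as applied in the paper for $p\ge 3$, includes $p=3$. That case needs separate treatment, e.g.\ via $\eta(8z)^r\eta(8qz)^s$ when $3\mid r+sq$; otherwise $\Delta\notin\mathbb{Z}$.
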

Throughout the sequel, the following lemma which can be easily proved using binomial theorem, will be frequently used without explicit reference.
\begin{lemma}\label{bt}
	For any prime $p$ and positive integers $k$ and $m$, we have
	\begin{align*}
		f_m^{p^k} \equiv f_{mp}^{p^{k-1}} \pmod{p^k}.
	\end{align*}
\end{lemma}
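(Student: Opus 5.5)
The statement to prove is Lemma \ref{bt}: $f_m^{p^k}\equiv f_{mp}^{p^{k-1}}\pmod{p^k}$. We prove the stronger claim that for every power series $A(q)\in\mathbb{Z}[[q]]$ with constant term $1$ and every $k\ge1$,
\begin{align*}
A(q)^{p^k}\equiv A(q^p)^{p^{k-1}}\pmod{p^k},
\end{align*}
with congruence meant coefficientwise in $\mathbb{Z}[[q]]$. Taking $A(q)=(q^m;q^m)_\infty$ then gives the lemma, since $A(q^p)=(q^{mp};q^{mp})_\infty=f_{mp}$. We argue by induction on $k$.

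For the base case $k=1$, we need $A(q)^p\equiv A(q^p)\pmod p$. By Frobenius, raising to the $p$th power is additive modulo $p$, because $\binom{p}{i}\equiv0\pmod p$ for $0<i<p$. Coefficients satisfy $a^p\equiv a\pmod p$ by Fermat. Truncating $A$ to a polynomial modulo $q^N$ for arbitrary $N$ gives $\bigl(\sum a_n q^n\bigr)^p\equiv\sum a_n^p q^{np}\equiv \sum a_nq^{np}\pmod p$. Alternatively, for products one may apply $(1-q^{mn})^p\equiv 1-q^{mnp}\pmod p$ factorwise.

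For the induction step, suppose $A(q)^{p^k}=A(q^p)^{p^{k-1}}+p^kB(q)$ with $B\in\mathbb{Z}[[q]]$. Raising both sides to the $p$th power and expanding binomially gives
\begin{align*}
A(q)^{p^{k+1}}=A(q^p)^{p^{k}}+\sum_{i=1}^{p}\binom{p}{i}A(q^p)^{p^{k-1}(p-i)}p^{ki}B(q)^i.
\end{align*}
For $1\le i\le p-1$, the term is divisible by $p\cdot p^{ki}$, and $p\cdot p^{ki}$ divides $p^{k+1}$'s multiple. For $i=p$, it is divisible by $p^{kp}$, and $kp\ge k+1$ since $p\ge2$ and $k\ge1$. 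Hence $A(q)^{p^{k+1}}\equiv A(q^p)^{p^k}\pmod{p^{k+1}}$, which completes the induction.

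The argument has no real obstacle. The only care needed is to justify working with infinite products. Since $A$ has integer coefficients and constant term $1$, all powers lie in $\mathbb{Z}[[q]]$, and each coefficient is determined by finitely many factors, so the truncation argument applies.
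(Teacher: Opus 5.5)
Your proof is correct and is essentially the binomial-theorem argument the paper alludes to; the paper gives no details beyond that remark. Your base case $A(q)^p\equiv A(q^p)\pmod p$ is lifted to modulus $p^k$ by repeated binomial expansion. One wording slip in the induction step: you mean that $p^{k+1}$ divides $p\cdot p^{ki}$ for $1\le i\le p-1$, not the reverse.
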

\section{Congruences modulo powers of $2$}\label{s3}
In the following lemma we list the dissections required to prove Lemma \ref{lgf1}-\ref{lgf3}.
\begin{lemma}\label{ld}
	The following $2$-dissections hold:
	\begin{align}
		\dfrac{1}{f_1^2}&=\dfrac{f_8^5}{f_2^5f_{16}^2}+2q\dfrac{f_4^2f_{16}^2}{f_2^5f_8} \label{edf2}\\
		\dfrac{1}{f_1^4}&=\dfrac{f_4^{14}}{f_2^{14}f_8^4}+4q\dfrac{f_4^2f_8^4}{f_2^{10}} \label{edf4}\\
		\dfrac{1}{f_1^8}&=\dfrac{f_4^{28}}{f_2^{28}f_8^8}+8q\dfrac{f_4^{16}}{f_2^{24}}+16q^2 \dfrac{f_4^4f_8^8}{f_2^{20}} \label{edf8}.	
	\end{align}
\end{lemma}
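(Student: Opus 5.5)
These three $2$-dissections are classical, so I plan to derive the first from a standard identity and get the other two by squaring. For \eqref{edf2}, I will start from the well-known $2$-dissection of $1/\varphi(-q)$, where $\varphi(q)=f_2^5/(f_1^2f_4^2)$ is Ramanujan's theta function. Since $\varphi(-q)=f_1^2/f_2$, we have
\begin{align*}
\frac{1}{f_1^2}=\frac{1}{f_2}\cdot\frac{1}{\varphi(-q)}.
\end{align*}
The key fact is $\varphi(q)\varphi(-q)=\varphi(-q^2)^2$, which comes from the product formulas. It gives $1/\varphi(-q)=\varphi(q)/\varphi(-q^2)^2$. Next I insert the $2$-dissection $\varphi(q)=\varphi(q^4)+2q\psi(q^8)$, which follows from splitting $\sum q^{n^2}$ by the parity of $n$. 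Here $\varphi(q^4)=f_8^5/(f_4^2f_{16}^2)$, $\psi(q^8)=f_{16}^2/f_8$, and $\varphi(-q^2)^2=f_2^4/f_4^2$. Substituting and simplifying the eta quotients should give exactly
\begin{align*}
\frac{1}{f_1^2}=\frac{f_4^2}{f_2^5}\left(\frac{f_8^5}{f_4^2f_{16}^2}+2q\frac{f_{16}^2}{f_8}\right),
\end{align*}
which is \eqref{edf2}. The bookkeeping of exponents in this substitution is the only real work for this identity.

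For \eqref{edf4}, I will square \eqref{edf2}. This produces three terms: $f_8^{10}/(f_2^{10}f_{16}^4)$, $4q f_4^2f_8^4/f_2^{10}$, and $4q^2f_4^4f_{16}^4/(f_2^{10}f_8^2)$. The middle term already matches the odd part of \eqref{edf4}. For the even part, I need the identity
\begin{align*}
\frac{f_8^{10}}{f_{16}^4}+4q^2\frac{f_4^4f_{16}^4}{f_8^2}=\frac{f_4^{14}}{f_2^4f_8^4}.
\end{align*}
After replacing $q^2$ by $q$ and dividing by $f_4^4$, this becomes $\varphi(q)^2 \cdot$ (an eta factor) $+\,4q\psi(q^2)^2\cdot$(same factor) $=\varphi(q)^2+4q\psi(q^2)^2=\varphi(q^2)^2\cdots$, i.e. the classical $\varphi(q)^2=\varphi(q^2)^2+4q\psi(q^4)^2$ rearranged. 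An alternative route is to square the even-part identity $\varphi(q)^2+\varphi(-q)^2=2\varphi(q^2)^2$. I expect this identification of the even part to be the main obstacle, and I will pin it down by checking that both sides reduce to Jacobi's theta relation $\varphi(q)^2-\varphi(-q)^2=8q\psi(q^4)^2$ together with the product formulas.

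Finally, \eqref{edf8} follows by squaring \eqref{edf4}. The cross term is $8q f_4^{16}/f_2^{24}$, and the last term is $16q^2 f_4^4f_8^8/f_2^{20}$. Both come out immediately, so nothing further is needed.

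Throughout, I will double-check the identities by comparing the first several $q$-coefficients.
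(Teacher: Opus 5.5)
Your proposal is correct. For \eqref{edf8} it coincides with the paper, which simply squares \eqref{edf4}. For the first two identities the paper gives no derivation: it cites them as Eqs.~(1.9.4) and (1.10.1) of Hirschhorn's \emph{The Power of $q$}. You instead reconstruct them from the product formulas for $\varphi$ and $\psi$.

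Your derivation of \eqref{edf2}, via $1/f_1^2=\varphi(q)f_4^2/f_2^5$ and $\varphi(q)=\varphi(q^4)+2q\psi(q^8)$, checks out exactly.

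For \eqref{edf4} your reduction of the even part is right, but the sentence describing it is garbled. After replacing $q^2$ by $q$ you must divide by $f_2^4$, not $f_4^4$. The resulting terms are then $\varphi(q^2)^2$ and $4q\psi(q^4)^2$, with right-hand side $\varphi(q)^2$; they are not $\varphi(q)^2$ and $4q\psi(q^2)^2$. So the fact you need is precisely $\varphi(q)^2=\varphi(q^2)^2+4q\psi(q^4)^2$. This follows by adding $\varphi(q)^2+\varphi(-q)^2=2\varphi(q^2)^2$ (nothing needs to be squared there) to $\varphi(q)^2-\varphi(-q)^2=8q\psi(q^4)^2$. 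With that correction the argument closes.

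A shorter route to \eqref{edf4} is to square $1/f_1^2=\varphi(q)f_4^2/f_2^5$, giving $1/f_1^4=\varphi(q)^2f_4^4/f_2^{10}$, and insert that identity directly. This avoids expanding the square of \eqref{edf2} and recombining its even terms.

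Your approach gives a self-contained proof from classical theta-function identities. The paper's approach gains brevity by delegating exactly that work to the reference.
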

\begin{proof}
Equations $\eqref{edf2}$ and $\eqref{edf4}$ are same as {{\cite[Eq. 1.9.4]{poq}}} and {{\cite[Eq. 1.10.1]{poq}}}, respectively. Equation \eqref{edf8} can be obtained from \eqref{edf4}.
\end{proof}
In the lemmas that follow, we prove generating functions necessary for the proofs our main theorems.
\begin{lemma}\label{lgf1}
For integers $n\ge 0$, $k\ge 1$ and $\alpha\ge 1$ with $2\nmid \alpha$, we have
	\begin{align}
		\sum_{n=0}^{\infty}\bar{a}_{2^k\alpha}(2n+1)q^n&\equiv 2^{k+1}f_1^{12}\pmod{2^{k+2}}\label{e2n1}\\
			\sum_{n=0}^{\infty}\bar{a}_{2^k\alpha}(4n+1)q^n&\equiv 2^{k+1}f_1^{6}\pmod{2^{k+2}}\label{e4n1}\\
			\sum_{n=0}^{\infty}\bar{a}_{2^k\alpha}(4n+3)q^n&\equiv 2^{k+2}f_1^{18}\pmod{2^{k+3}}\label{e4n}\\
			\sum_{n=0}^{\infty}\bar{a}_{2^k\alpha}(8n+5)q^n&\equiv 2^{k+2}f_1^{15}\pmod{2^{k+3}}\label{e8n5}\\
			\sum_{n=0}^{\infty}\bar{a}_{2^k\alpha}(8n+7)q^n&\equiv 2^{k+3}f_1^{21}\pmod{2^{k+4}}\label{e8n}.
	\end{align}
\end{lemma}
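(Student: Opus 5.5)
The plan is to rewrite the generating function \eqref{gf} in terms of Ramanujan's theta functions $\varphi(q)=f_2^5/(f_1^2f_4^2)$ and $\psi(q)=f_2^2/f_1$, and then use a binomial expansion in which the power $2^k$ of $s$ controls the $2$-adic valuation. With $s=2^k\alpha$ we have
\begin{align*}
\frac{f_2^{3s-2}}{f_1^{2s}f_4^{s-1}}=\left(\frac{f_2}{f_1^2}\right)^{s}\left(\frac{f_2^2}{f_4}\right)^{s-1}=\frac{\varphi(-q^2)^{s-1}}{\varphi(-q)^{s}}=\frac{\varphi(q)^s}{\varphi(-q^2)^{s+1}},
\end{align*}
where the last step uses $\varphi(q)\varphi(-q)=\varphi(-q^2)^2$. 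This identity is the product form of \eqref{edf2}. Next I insert the $2$-dissection $\varphi(q)=\varphi(q^4)+2q\psi(q^8)$ and expand
\begin{align*}
\varphi(q)^s=\sum_{j\ge0}\binom{s}{j}2^jq^j\psi(q^8)^j\varphi(q^4)^{s-j}.
\end{align*}
The key valuation estimate is $\nu_2\bigl(\binom{s}{j}2^j\bigr)\ge k-\nu_2(j)+j$. This gives valuation $k+1$ for $j=1$, at least $k+2$ for $j\ge2$, and at least $k+3$ for odd $j\ge3$. Since $\varphi(-q^2)$ is even in $q$, the odd-indexed terms come only from odd $j$.

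For \eqref{e2n1}, modulo $2^{k+2}$ only $j=1$ survives. Using $\alpha\equiv1$ and $\varphi\equiv1\pmod 2$, I get $\sum\bar a_s(2n+1)q^{2n+1}\equiv2^{k+1}q\psi(q^8)$. Replacing $q^2$ by $q$ leaves $\psi(q^4)=f_8^2/f_4\equiv f_4^3\equiv f_1^{12}\pmod 2$ by Lemma \ref{bt}. For \eqref{e4n1}, since $f_1^{12}\equiv f_2^6\pmod 2$ is even in $q$, extracting the terms $q^{2n}$ gives $f_1^6$, so $4n+1$ follows.

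For \eqref{e4n} I need the $j=1$ cofactor $\varphi(q^4)^{s-1}\varphi(-q^2)^{-s-1}$ modulo $4$ instead of modulo $2$; the $j=3$ term is already $0\pmod{2^{k+3}}$. Writing $\varphi(-q^2)=\varphi(q^8)-2q^2\psi(q^{16})$ and using that $s+1$ is odd, I get
\begin{align*}
\varphi(-q^2)^{-s-1}\equiv\varphi(q^8)^{-s-1}\bigl(1+2q^2\psi(q^{16})\bigr)\pmod 4.
\end{align*}
Among the terms of the odd part, those with exponent $\equiv3\pmod4$ are therefore $2^{k+2}q^3\psi(q^8)\psi(q^{16})$ times a factor $\equiv1\pmod 2$. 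After $q^{4n+3}\to q^n$ this becomes $2^{k+2}\psi(q^2)\psi(q^4)\equiv 2^{k+2}f_2^3f_4^3\equiv 2^{k+2}f_1^{18}\pmod{2^{k+3}}$.

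Congruences \eqref{e8n5} and \eqref{e8n} are obtained the same way, but the bookkeeping must go one level deeper. For $8n+5$ I keep the $j=1$ part modulo $2^{k+3}$, expand the $\varphi(q^4)$, $\varphi(q^8)$ factors as well, and track the terms $q^{8n+5}$; the surviving product of $\psi$'s should reduce to $f_1^{15}$ modulo $2$ via Lemma \ref{bt}. For $8n+7$ I need precision $2^{k+4}$. This means including the $j=3$ term, whose valuation is exactly $k+3$ when $\nu_2\binom{s}{3}=k$, together with the $2^2$-order corrections of the $j=1$ cofactor. The relevant contributions should combine into $2^{k+3}f_1^{21}$. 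I expect this last step to be the main obstacle: several terms of the same $2$-adic order have to be collected and shown to add up, modulo $2$, to a single eta-product. Where the $\varphi,\psi$ dissections become awkward, I would instead use the explicit dissections \eqref{edf4} and \eqref{edf8} of Lemma \ref{ld}.
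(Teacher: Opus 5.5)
Your rewriting $\sum_n\bar a_s(n)q^n=\varphi(q)^s\varphi(-q^2)^{-s-1}$, combined with $\varphi(q)=\varphi(q^4)+2q\psi(q^8)$, is the same identity as the paper's \eqref{edf2}, but organized differently. The paper re-applies \eqref{edf2} to each new eta-quotient produced by an extraction (\eqref{el1}, \eqref{el2}, \eqref{el3}, \dots). You expand once and afterwards only dissect $\varphi(q^4)$ and $\varphi(-q^2)$. Each congruence then amounts to reading off exponent classes modulo $4$ or $8$, and you see explicitly how precisely each cofactor is needed.

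Your arguments for \eqref{e2n1}, \eqref{e4n1} and \eqref{e4n} are complete. There is one harmless slip: $2^2\binom{s}{2}=2s(s-1)$ has valuation exactly $k+1$, not $\ge k+2$. Since only odd $j$ reach the odd part, nothing is affected.

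The gap is that \eqref{e8n5} and \eqref{e8n} are only announced (``should reduce'', ``should combine''), so as written they are not proved. They do go through, and the obstacle you anticipate does not occur.

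For \eqref{e8n5}, modulo $4$ you have
\begin{align*}
\varphi(q^4)^{s-1}\equiv\varphi(q^{16})^{s-1}+2q^4\psi(q^{32})\varphi(q^{16})^{s-2},\qquad \varphi(-q^2)^{-s-1}\equiv\varphi(q^8)^{-s-1}\bigl(1+2q^2\psi(q^{16})\bigr).
\end{align*}
All remaining factors are series in $q^8$. So in the $j=1$ term $2^{k+1}\alpha q\psi(q^8)\varphi(q^4)^{s-1}\varphi(-q^2)^{-s-1}$, the exponents $\equiv 5\pmod 8$ come only from $2q^4\psi(q^{32})$. This gives $\sum_n\bar a_s(8n+5)q^n\equiv 2^{k+2}\psi(q)\psi(q^4)\pmod{2^{k+3}}$. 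Using $\psi(q^{2^r})\equiv f_1^{3\cdot 2^r}\pmod 2$, this is $2^{k+2}f_1^{15}$.

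For \eqref{e8n}, first consider the $j=3$ term. Its cofactor is only needed modulo $2$, so it is $\equiv 8\binom{s}{3}q^3\psi(q^8)^3\pmod{2^{k+4}}$. It therefore sits on exponents $\equiv 3\pmod 8$ and never reaches $8n+7$; odd $j\ge 5$ vanish.

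In the $j=1$ term you need the cofactor modulo $8$. The factor $\varphi(q^4)^{s-1}$ has only exponent classes $0$ and $4\pmod 8$. In
$\varphi(-q^2)^{-s-1}=\varphi(q^8)^{-s-1}\sum_i\binom{s+i}{i}\bigl(2q^2\psi(q^{16})/\varphi(q^8)\bigr)^i$,
class $6$ needs $i\ge 3$, hence is $0\pmod 8$. So class $6$ arises only as class $4$ times class $2$, namely $4(s-1)(s+1)q^6\psi(q^{16})\psi(q^{32})$ times a series $\equiv 1\pmod 2$. Therefore
\begin{align*}
\sum_{n=0}^{\infty}\bar a_s(8n+7)q^n\equiv 2^{k+3}\psi(q)\psi(q^2)\psi(q^4)\equiv 2^{k+3}f_1^{21}\pmod{2^{k+4}}.
\end{align*}
A single product survives, so there is nothing to collect.

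Completed this way, your route is actually tighter than the paper's written argument. The paper extracts \eqref{e8n5} and \eqref{e8n} modulo $2^{k+3}$ and $2^{k+4}$ from \eqref{el2} and \eqref{el3}, which it establishes only modulo $2^{k+2}$ and $2^{k+3}$. Your explicit mod-$4$ and mod-$8$ tracking of the cofactor is exactly what justifies that extra power of $2$.
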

	\begin{proof}
	Thanks to \eqref{gf}, we have
	\begin{align}
		\sum_{n=0}^{\infty}\bar{a}_{2^k\alpha}(n)q^n=\dfrac{f_2^{3\cdot2^k\alpha-2}}{f_1^{2^{k+1}\alpha}f_4^{2^k\alpha-1}}.
	\end{align}
	Employing \eqref{edf2} in the above equation, we have
	\begin{align*}
		\sum_{n=0}^{\infty}\bar{a}_{2^k\alpha}(n)q^n&=\dfrac{f_2^{3\cdot2^k\alpha-2}}{f_4^{2^k\alpha-1}}\left(\dfrac{f_8^5}{f_2^5f_{16}^2}+2q\dfrac{f_4^2f_{16}^2}{f_2^5f_8}\right)^{2^k\alpha}\\
		&=\dfrac{f_2^{3\cdot2^k\alpha-2}}{f_4^{2^k\alpha-1}}\left(\dfrac{f_8^5}{f_2^5f_{16}^2}\right)^{2^k\alpha}\sum_{i=0}^{2^k\alpha}\binom{2^k\alpha}{i}2^iq^i\left(\dfrac{f_4^{2i}f_{16}^{4i}}{f_8^{6i}}\right).
	\end{align*}
	Extracting the terms of the form $q^{2n+1}$ from both sides of the above equation, and then replacing $q^{2}$ with $q$ yields
	\begin{align*}
		\sum_{n=0}^{\infty}\bar{a}_{2^k\alpha}(2n+1)q^n&=\dfrac{f_4^{5\cdot2^k\alpha-6}}{f_1^{2^{k+1}\alpha+2}f_2^{2^k\alpha-3}f_8^{2^{k+1}\alpha-4}}\sum_{i=0}^{2^k\alpha}\binom{2^k\alpha}{2i+1}2^{2i+1}q^i\left(\dfrac{f_2^{4i}f_{8}^{8i}}{f_4^{12i}}\right).
	\end{align*}
	Again, using \eqref{edf2} in the above equation, we obtain
	\begin{align}\label{el0}
		\sum_{n=0}^{\infty}\bar{a}_{2^k\alpha}(2n+1)q^n&=\dfrac{f_4^{5\cdot2^k\alpha-6}f_8^{3\cdot2^k\alpha+9}}{f_2^{6\cdot2^{k}\alpha+2}f_{16}^{2^{k+1}\alpha+2}}\left(\sum_{i=0}^{2^k\alpha}\binom{2^k\alpha}{2i+1}2^{2i+1}q^i\left(\dfrac{f_2^{4i}f_{8}^{8i}}{f_4^{12i}}\right)\right)\nonumber\\
		&\cdot\left(\sum_{j=0}^{2^k\alpha+1}\binom{2^k\alpha+1}{j}2^jq^j\left(\dfrac{f_4^{2j}f_{16}^{4j}}{f_8^{6j}}\right)\right).
	\end{align}
	Since, for $i\ge1$ and $j\ge4$,
	\begin{align*}
		2^{i+1}\binom{2^k\alpha}{2i+1}\equiv 0\pmod{2^{k+2}} \text{ and } 2^j\binom{2^k\alpha+1}{j}\equiv 0\pmod{2^{k+2}},
	\end{align*}
we have
	\begin{align}\label{el1}
	\sum_{n=0}^{\infty}\bar{a}_{2^k\alpha}(2n+1)q^n&\equiv2^{k+1} \dfrac{f_4^{5\cdot2^k\alpha-6}f_8^{3\cdot2^k\alpha+9}}{f_2^{6\cdot2^{k}\alpha+2}f_{16}^{2^{k+1}\alpha+2}}\pmod {2^{k+2}}.
	\end{align}
	Applying Lemma \ref{bt} to the above congruence yields \eqref{e2n1}.\\
\indent	Extracting the terms of the form $q^{2n}$ from both sides of \eqref{el1}, and then replacing $q^{2}$ by $q$ gives
	\begin{align}\label{el2}
		\sum_{n=0}^{\infty}\bar{a}_{2^k\alpha}(4n+1)q^n&\equiv2^{k+1} \dfrac{f_2^{5\cdot2^k\alpha-6}f_4^{3\cdot2^k\alpha+9}}{f_1^{6\cdot2^{k}\alpha+2}f_{8}^{2^{k+1}\alpha+2}}\pmod {2^{k+2}}.
	\end{align}
	Using Lemma \ref{bt} in the above congruence gives \eqref{e4n1}.\\
	\indent	Since, for $i\ge2$ and $j\ge6$,
	\begin{align*}
		2^{i+1}\binom{2^k\alpha}{2i+1}\equiv 0\pmod{2^{k+3}} \text{ and } 2^j\binom{2^k\alpha+1}{j}\equiv 0\pmod{2^{k+3}},
	\end{align*}
	it follows from \eqref{el0} that
	\begin{align*}
		\sum_{n=0}^{\infty}\bar{a}_{2^k\alpha}(2n+1)q^n
		&\equiv \dfrac{f_4^{5\cdot2^k\alpha-6}f_8^{3\cdot2^k\alpha+9}}{f_2^{6\cdot2^{k}\alpha+2}f_{16}^{2^{k+1}\alpha+2}}\left(2^{k+1}\alpha+2^{k+2}(2^k\alpha+1)q\cdot \dfrac{f_4^2f_{16}^4}{f_8^6}\right)\pmod {2^{k+3}}.
	\end{align*}
	Extracting the terms of the form $q^{2n+1}$ from both sides of the above equation, and then replacing $q^{2}$ with $q$ yields
	\begin{align}\label{el3}
		\sum_{n=0}^{\infty}\bar{a}_{2^k\alpha}(4n+3)q^n&\equiv2^{k+2}\dfrac{f_2^{5\cdot2^k\alpha-4}f_4^{3\cdot2^k\alpha+3}}{f_1^{6\cdot2^{k}\alpha+2}f_{8}^{2^{k+1}\alpha-2}}\pmod{2^{k+3}}.
	\end{align}
	Employing Lemma \ref{bt} in the above congruence gives \eqref{e4n}.\\
	\indent Using \eqref{edf2} in \eqref{el2}, we obtain
\begin{align*}
	\sum_{n=0}^{\infty}\bar{a}_{2^k\alpha}(4n+1)q^n&\equiv2^{k+1} \dfrac{f_2^{5\cdot2^k\alpha-6}f_4^{3\cdot2^k\alpha+9}}{f_{8}^{2^{k+1}\alpha+2}}\left(\dfrac{f_8^5}{f_2^5f_{16}^2}\right)^{3\cdot2^k\alpha+1}\\
	&\cdot\left(\sum_{i=0}^{3\cdot2^k\alpha+1}\binom{3\cdot 2^k \alpha+1}{i}2^{i}q^{i}\dfrac{f_4^{2i}f_{16}^{4i}}{f_8^{6i}}\right)\pmod {2^{k+2}}.
\end{align*}
	Extracting the terms of the form $q^{2n+1}$ from both sides of the above equation, and then replacing $q^{2}$ with $q$ gives
\begin{align*}
\sum_{n=0}^{\infty}\bar{a}_{2^k\alpha}(8n+5)q^n\equiv 2^{k+2}\dfrac{f_2^{3\cdot2^k\alpha+11}f_4^{13\cdot 2^k\alpha -3}}{f_1^{10\cdot2^k\alpha+11}f_8^{6\cdot 2^k\alpha-2}}\pmod{2^{k+3}}
\end{align*}
Employing Lemma \ref{bt} in the above congruence results in \eqref{e8n5}.\\
\indent Using \eqref{edf2} in \eqref{el3}, we get
\begin{align*}
\sum_{n=0}^{\infty}\bar{a}_{2^k\alpha}(4n+3)q^n&\equiv2^{k+2}\dfrac{f_2^{5\cdot2^k\alpha-4}f_4^{3\cdot2^k\alpha+3}}{f_{8}^{2^{k+1}\alpha-2}}\left(\dfrac{f_8^5}{f_2^5f_{16}^2}\right)^{3\cdot2^k\alpha+1}\\
&\cdot\left(\sum_{i=0}^{3\cdot2^k\alpha+1}\binom{3\cdot2^k\alpha+1}{i}2^iq^i\dfrac{f_4^{2i}f_{16}^{4i}}{f_8^{6i}}\right)\pmod{2^{k+3}}.
\end{align*}
Extracting the terms of the form $q^{2n+1}$ from both sides of the above equation, and then replacing $q^{2}$ with $q$ yields
\begin{align*}
\sum_{n=0}^{\infty}\bar{a}_{2^k\alpha}(8n+7)q^n\equiv 2^{k+3}\dfrac{f_2^{3\cdot2^k\alpha+5}f_4^{13\cdot2^k\alpha+1}}{f_1^{10\cdot2^k\alpha+9}f_8^{6\cdot2^k\alpha-2}}\pmod{2^{k+4}}.	
\end{align*}
Employing Lemma \ref{bt} in the above congruence confirms \eqref{e8n}.
	\end{proof}
	\begin{lemma}\label{lgf2}
		For all $n\ge0$ and $\alpha\ge 1$, we have
	\begin{align}
		\sum_{n=0}^{\infty}\bar{a}_{2\alpha+1}(2n+1)q^n&\equiv 2f_1^{12}\pmod4\label{e2no}\\
		\sum_{n=0}^{\infty}\bar{a}_{2\alpha+1}(4n+1)q^n&\equiv 2f_1^{6}\pmod4\label{e4no}\\
		\sum_{n=0}^{\infty}\bar{a}_{2\alpha+1}(4n+2)q^n&\equiv 4f_1^{12}\pmod8\label{e4n2o}\\
		\sum_{n=0}^{\infty}\bar{a}_{2\alpha+1}(4n+3)q^n&\equiv 8f_1^{18}\pmod{16}\label{e4n3o}\\
		\sum_{n=0}^{\infty}\bar{a}_{2\alpha+1}(8n+4)q^n&\equiv 2f_1^{12}\pmod4\label{e8n4o}\\
		\sum_{n=0}^{\infty}\bar{a}_{2\alpha+1}(8n+5)q^n&\equiv 8f_1^{15}\pmod{16}\label{e8n5o}\\
	\sum_{n=0}^{\infty}\bar{a}_{2\alpha+1}(8n+7)q^n&\equiv 64f_1^{21}\pmod{128}\label{e8n7o}.
	\end{align}
\end{lemma}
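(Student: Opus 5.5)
The plan is to follow the pattern of the proof of Lemma \ref{lgf1}, now with $s=2\alpha+1$ odd. By \eqref{gf},
\begin{align*}
\sum_{n=0}^{\infty}\bar{a}_{2\alpha+1}(n)q^n=\dfrac{f_2^{6\alpha+1}}{f_1^{4\alpha+2}f_4^{2\alpha}}=\dfrac{f_2^{6\alpha+1}}{f_4^{2\alpha}}\left(\dfrac{1}{f_1^2}\right)^{2\alpha+1}.
\end{align*}
I substitute the $2$-dissection \eqref{edf2} and expand binomially:
\begin{align*}
\left(\dfrac{1}{f_1^2}\right)^{2\alpha+1}=\left(\dfrac{f_8^5}{f_2^5f_{16}^2}\right)^{2\alpha+1}\sum_{i=0}^{2\alpha+1}\binom{2\alpha+1}{i}2^iq^i\left(\dfrac{f_4^{2}f_{16}^{4}}{f_8^{6}}\right)^i.
\end{align*}
Extracting the odd and even powers of $q$ gives exact expressions for $\sum\bar{a}_{2\alpha+1}(2n+1)q^n$ and $\sum\bar{a}_{2\alpha+1}(2n)q^n$. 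Each is an eta-quotient prefactor times a finite sum whose $i$-th term carries the $2$-adic weight $2^i\binom{2\alpha+1}{i}$.

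For each target modulus I keep only the terms whose weights survive and discard the rest.
\begin{itemize}[leftmargin=*]
\item For \eqref{e2no}, only $i=1$ contributes modulo $4$, with coefficient $2(2\alpha+1)\equiv 2\pmod 4$. So I only need the prefactor modulo $2$. There Lemma \ref{bt} collapses every factor to a power of $f_1$; the exponent bookkeeping should give $f_2^{6}\equiv f_1^{12}$.
\item For \eqref{e4no}, I extract the even part of that congruence. A second application of \eqref{edf2} is needed only modulo $2$, so again Lemma \ref{bt} gives $2f_1^6$.
\item For \eqref{e4n2o} and \eqref{e8n4o}, I work with the even-index series $\sum\bar{a}_{2\alpha+1}(2n)q^n$. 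Here the $i=0$ term is not divisible by $2$, so I must dissect it again. I plan to use \eqref{edf2}, or \eqref{edf4} and \eqref{edf8} when $2\alpha+1$ is regrouped into fourth and eighth powers, then extract the $q^{2n+1}$ and $q^{4n}$ parts.
\item For \eqref{e4n3o}, \eqref{e8n5o} and \eqref{e8n7o}, I carry the odd-index expansion to higher precision, keeping $i\le 3$ or so, and re-dissect as in the derivations of \eqref{el3}, \eqref{e8n5} and \eqref{e8n}.
\end{itemize}

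Since $\alpha$ here has arbitrary parity, the $2$-adic valuations of $\binom{2\alpha+1}{i}$ and of the powers $f_1^{4\alpha}$ are not uniform. Where needed I will reduce with identities that hold for every $\alpha$, such as $f_1^{4}\equiv f_2^{2}\pmod 4$ and $f_1^8\equiv f_2^4\pmod 8$. The stated right-hand sides do not depend on $\alpha$, which suggests that the cancellations work out; I will check this exponent by exponent.

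I expect the main obstacle to be the high-modulus claims, above all \eqref{e8n7o} modulo $128$ and \eqref{e4n3o} modulo $16$. Several terms of the expansion contribute there, and showing that they combine into a single multiple of $2^6f_1^{21}$ requires exact tracking of the middle binomial terms, not just a leading-term argument. My first approach is to write $2\alpha+1=1+2\alpha$ and expand $(1/f_1^2)^{2\alpha}=(1/f_1^4)^{\alpha}$ through \eqref{edf4}. In that dissection the cross term already carries the factor $4q$, so $(1/f_1^4)^{\alpha}$ contributes a multiple of $4$ to every $q$-odd part, and only $1/f_1^2$ has to be handled with \eqref{edf2}. Then I extract successively the residue classes $8n+7$ and $8n+5$, and finally apply Lemma \ref{bt} to reduce the resulting eta-quotient to a single power of $f_1$.
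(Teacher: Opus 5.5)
\textbf{There is a genuine gap: the three high-modulus congruences are not proved.}

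Your outline for the mod-$4$ congruences \eqref{e2no}, \eqref{e4no} and \eqref{e8n4o} is fine. \eqref{e4n2o} also goes through, but only if you keep the $i=2$ term of the even part. That term contributes $4\binom{2\alpha+1}{2}\equiv 4\alpha\pmod 8$. Re-dissecting $f_1^{-4\alpha-4}$ in the $i=0$ prefactor contributes $4(\alpha+1)$. Only the sum $4(2\alpha+1)\equiv 4$ is free of $\alpha$.

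The gap is in \eqref{e4n3o}, \eqref{e8n5o} and \eqref{e8n7o}. There you stop at ``the cancellations should work out; I will check exponent by exponent'', and that check is the whole content of these statements.
\begin{itemize}
\item To conclude anything modulo $128$ in the class $8n+7$, the series for $\bar a_{2\alpha+1}(2n+1)$ and $\bar a_{2\alpha+1}(4n+3)$ must themselves be carried modulo $128$.
\item After each extraction the prefactor still contains an $\alpha$-dependent power of $f_1$. Re-dissecting it creates new cross terms, with coefficients such as $4\alpha$, $8\alpha$, $16\binom{\alpha}{2}$, $64\binom{\alpha}{3}$. Showing that these vanish in the target class, uniformly in $\alpha$, is exactly what has to be done.
\item ``Keeping $i\le 3$'' in the single-\eqref{edf2} expansion is unjustified at this precision, since $2^i\binom{2\alpha+1}{i}$ for $i=4,5,6$ need not vanish mod $128$.
\end{itemize}
Your fallback of splitting $2\alpha+1=1+2\alpha$ is exactly the paper's route, and the paper's derivations cannot serve as a template here. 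It obtains \eqref{e8n5o} and \eqref{e8n7o} (like \eqref{e8n5} and \eqref{e8n}, which you cite) by dissecting \eqref{el5}, known only mod $4$, and \eqref{el7}, known only mod $16$. It then concludes mod $16$ and mod $128$. But $F\equiv 2^aH\pmod{2^{a+1}}$ gives no information about $F$ modulo $2^{a+2}$ after extracting a subprogression.

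\textbf{A way to close the gap.} Expand around the overpartition case. Two facts make this work:
\begin{itemize}
\item $\frac{f_2^{6\alpha+1}}{f_1^{4\alpha+2}f_4^{2\alpha}}=\frac{f_2}{f_1^2}\bigl(\frac{f_2^6}{f_1^4f_4^2}\bigr)^{\alpha}$.
\item $\frac{f_2^6}{f_1^4f_4^2}=1+4q\frac{f_2f_{16}^2}{f_1^2f_8}$. After multiplying by $f_1^2/f_2$, this is $\varphi(q)-\varphi(-q)=4q\psi(q^8)$, with $\varphi(q)=f_2^5/(f_1^2f_4^2)$ and $\psi(q)=f_2^2/f_1$.
\end{itemize}
Together they give
\[
\sum_{n\ge 0}\bar a_{2\alpha+1}(n)q^n=\sum_{k=0}^{\alpha}\binom{\alpha}{k}4^kq^k\,\frac{f_{16}^{2k}}{f_8^{k}}\cdot\frac{f_2^{k+1}}{f_1^{2k+2}}.
\]
Modulo $128$ only $k\le 3$ survive. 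Because $f_{16}^{2k}/f_8^k$ is a series in $q^8$, the $k$-th term sees only the coefficients of $f_2^{k+1}/f_1^{2k+2}$ in the class $r-k\pmod 8$.

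The $k=0$ term is $\sum\bar p(n)q^n$. Three exact dissections by \eqref{edf2}, \eqref{edf4} and \eqref{edf8} give
\[
\sum\bar p(2n+1)q^n=\frac{2f_2^2f_8^2}{f_1^4f_4},\qquad \sum\bar p(4n+3)q^n=\frac{8f_2f_4^6}{f_1^8},\qquad \sum\bar p(8n+7)q^n=\frac{64f_2^{22}}{f_1^{23}},
\]
and similarly for the classes $4n+1$ and $8n+5$.

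The terms with $k\ge 1$ need only crude facts:
\begin{itemize}
\item $f_2^2/f_1^4\equiv 1\pmod 4$;
\item the coefficients of $q^{8n+6}$ in $f_2^2/f_1^4$ are divisible by $32$;
\item the coefficients of $q^{8n+5}$ in $f_2^3/f_1^6$ are divisible by $8$;
\item $f_2^4/f_1^8\equiv 1\pmod 2$.
\end{itemize}
This yields all seven congruences uniformly in $\alpha$, which is the mechanism your sketch is missing.
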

	\begin{proof}
	Thanks to \eqref{gf}, we have
	\begin{align*}
	\sum_{n=0}^{\infty}\bar{a}_{2\alpha+1}(n)q^n=\dfrac{f_2^{6\alpha+1}}{f_1^{4\alpha+2}f_4^{2\alpha}}.
	\end{align*}
	Employing \eqref{edf2} and \eqref{edf4} in the above equation, we obtain
	\begin{align}
		\sum_{n=0}^{\infty}\bar{a}_{2\alpha+1}(n)q^n&=\dfrac{f_2^{6\alpha+1}}{f_4^{2\alpha}}\left(\dfrac{f_8^5}{f_2^5f_{16}^2}+2q\dfrac{f_4^2f_{16}^2}{f_2^5f_8}\right)\left(\dfrac{f_4^{14}}{f_2^{14}f_8^4}+4q\dfrac{f_4^2f_8^4}{f_2^{10}}\right)^\alpha\nonumber\\
		&=\dfrac{f_2^{6\alpha+1}}{f_4^{2\alpha}}\left(\dfrac{f_4^{14}}{f_2^{14}f_8^4}\right)^\alpha\left(\dfrac{f_8^5}{f_2^5f_{16}^2}+2q\dfrac{f_4^2f_{16}^2}{f_2^5f_8}\right)\left(\sum_{i=0}^{\infty}\binom{\alpha}{i}4^iq^i\dfrac{f_2^{4i}f_8^{8i}}{f_4^{12i}}\right)\nonumber
	\end{align}
	\begin{align}
		&=\dfrac{f_4^{12\alpha}}{f_2^{8\alpha-1}f_8^{4\alpha}}\left(\dfrac{f_8^5}{f_2^5f_{16}^2}\sum_{i=0}^{\alpha}\binom{\alpha}{i}4^iq^i\dfrac{f_2^{4i}f_8^{8i}}{f_4^{12i}}+2\dfrac{f_4^2f_{16}^2}{f_2^5f_8}\sum_{i=0}^{\alpha}\binom{\alpha}{i}4^iq^{i+1}\dfrac{f_2^{4i}f_8^{8i}}{f_4^{12i}}\right)\label{el8}
	\end{align}
		Extracting the terms of the form $q^{2n+1}$ from both sides of \eqref{el8}, and then replacing $q^{2}$ with $q$ gives
	\begin{align}
	\sum_{n=0}^{\infty}\bar{a}_{2\alpha+1}(2n+1)q^n&=\dfrac{f_2^{12\alpha}}{f_1^{8\alpha-1}f_4^{4\alpha}}\left(\dfrac{f_4^5}{f_1^5f_{8}^2}\sum_{i=0}^{\alpha}\binom{\alpha}{2i+1}4^{2i+1}q^i\dfrac{f_1^{8i+4}f_4^{16i+8}}{f_2^{24i+12}}\right.\nonumber\\
	&\left.+2\dfrac{f_2^2f_{8}^2}{f_1^5f_4}\sum_{i=0}^{\alpha}\binom{\alpha}{2i}4^{2i}q^{i}\dfrac{f_1^{8i}f_4^{16i}}{f_2^{24i}}\right)\nonumber\\
	&\equiv 2\dfrac{f_2^{12\alpha+2}f_8^2}{f_1^{8\alpha+4}f_4^{4\alpha+1}}\pmod 4.\label{el4}
	\end{align}
	Employing Lemma \ref{bt} in the above congruence yields \eqref{e2no}.\\
\indent	Using \eqref{edf4} and \eqref{edf8} in \eqref{el4}, we have
\begin{align}
\sum_{n=0}^{\infty}\bar{a}_{2\alpha+1}(2n+1)q^n&\equiv 2\dfrac{f_2^{12\alpha+2}f_8^2}{f_4^{4\alpha+1}}\left(\dfrac{f_4^{14}}{f_2^{14}f_8^4}+4q\dfrac{f_4^2f_8^4}{f_2^{10}}\right)\nonumber\\
&\cdot\left(\dfrac{f_4^{28}}{f_2^{28}f_8^8}+8q\dfrac{f_4^{16}}{f_2^{24}}+16q^2 \dfrac{f_4^4f_8^8}{f_2^{20}}\right)^\alpha\pmod 4\label{el6}\\
&\equiv 2 \dfrac{f_4^{24\alpha+13}}{f_2^{16\alpha+12}f_8^{8\alpha+2}}
\pmod4.\nonumber
\end{align}
	Extracting the terms of the form $q^{2n}$ from both sides of the above equation, and then replacing $q^{2}$ with $q$ yields
\begin{align}\label{el5}
\sum_{n=0}^{\infty}\bar{a}_{2\alpha+1}(4n+1)q^n\equiv 2\dfrac{f_2^{24\alpha+13}}{f_1^{16\alpha+12}f_4^{8\alpha+2}}\pmod 4.
\end{align}
	Employing Lemma \ref{bt} in the above congruence gives \eqref{e4no}.\\
	\indent From \eqref{el4}, we have
	\begin{align*}
	\sum_{n=0}^{\infty}\bar{a}_{2\alpha+1}(2n+1)q^n
	&\equiv 4\alpha\dfrac{f_2^{12\alpha-12}}{f_1^{8\alpha}f_4^{4\alpha-13}f_8^2}+2\dfrac{f_2^{12\alpha+2}f_8^2}{f_1^{8\alpha+4}f_4^{4\alpha+1}}\pmod {16}.
	\end{align*}
	Employing \eqref{edf4} and \eqref{edf8} in the above equation, we get
	\begin{align*}
	\sum_{n=0}^{\infty}\bar{a}_{2\alpha+1}(2n+1)q^n&\equiv 4\alpha\dfrac{f_2^{12\alpha-12}}{f_4^{4\alpha-13}f_8^2}\left(\dfrac{f_4^{28}}{f_2^{28}f_8^8}\right)^\alpha\left(\sum_{i=0}^{\alpha}\binom{\alpha}{i}8^iq^i\dfrac{f_2^{4i}f_8^{8i}}{f_4^{12i}}\right)+2\dfrac{f_2^{12\alpha+2}f_8^2}{f_4^{4\alpha+1}}\\
	&\left(\dfrac{f_4^{28}}{f_2^{28}f_8^{8}}\right)^\alpha\left(\dfrac{f_4^{14}}{f_2^{14}f_8^4}+4q\dfrac{f_4^2f_8^4}{f_2^{10}}\right)\left(\sum_{j=0}^{\alpha}\binom{\alpha}{j}8^jq^j\dfrac{f_2^{4j}f_8^{8j}}{f_4^{12j}}\right)\\
	&\equiv 4\alpha\dfrac{f_4^{24\alpha+13}}{f_2^{16\alpha+12}f_8^{8\alpha+2}}\sum_{i=0}^{\alpha}\binom{\alpha}{i}8^iq^i\dfrac{f_2^{4i}f_8^{8i}}{f_4^{12i}}+2\dfrac{f_4^{24\alpha-1}}{f_2^{16\alpha-2}f_8^{8\alpha-2}}\left(\dfrac{f_4^{14}}{f_2^{14}f_8^4}\right.\\
	&\left.\sum_{j=0}^{\alpha}\binom{\alpha}{j}8^jq^j\dfrac{f_2^{4j}f_8^{8j}}{f_4^{12j}}+4\dfrac{f_4^2f_8^4}{f_2^{10}}\sum_{j=0}^{\alpha}\binom{\alpha}{j}8^jq^{j+1}\dfrac{f_2^{4j}f_8^{8j}}{f_4^{12j}}\right)\pmod{16}.
	\end{align*}
		Extracting the terms of the form $q^{2n+1}$ from both sides of the above equation, and then replacing $q^{2}$ with $q$ yields
	\begin{align}\label{el7}
	\sum_{n=0}^{\infty}\bar{a}_{2\alpha+1}(4n+3)q^n&\equiv8\dfrac{f_2^{24\alpha+1}}{f_1^{16\alpha+8}f_4^{8\alpha-6}} \pmod{16}.
	\end{align}
	Employing Lemma \ref{bt} in the above equation gives \eqref{e4n3o}.\\
\indent From \eqref{el8}, we have
\begin{align*}
\sum_{n=0}^{\infty}\bar{a}_{2\alpha+1}(n)q^n\equiv\dfrac{f_4^{12\alpha}}{f_2^{8\alpha+4}f_8^{4\alpha-5}f_{16}^2}+2q\dfrac{f_4^{12\alpha+2}f_{16}^2}{f_2^{8\alpha+4}f_8^{4\alpha+1}}\pmod 4.
\end{align*}
Extracting the terms of the form $q^{2n}$ from both sides of the above equation, and then replacing $q^{2}$ with $q$ gives
\begin{align}\label{el9}
\sum_{n=0}^{\infty}\bar{a}_{2\alpha+1}(2n)q^n\equiv\dfrac{f_2^{12\alpha}}{f_1^{8\alpha+4}f_4^{4\alpha-5}f_{8}^2}\pmod 4.
\end{align}
Employing \eqref{edf4} and \eqref{edf8} in \eqref{el9}, we obtain
\begin{align}\label{el11}
\sum_{n=0}^{\infty}\bar{a}_{2\alpha+1}(2n)q^n\equiv\dfrac{f_4^{24\alpha+19}}{f_2^{16\alpha+14}f_8^{8\alpha+6}}+4q\dfrac{f_4^{24\alpha+7}}{f_2^{16\alpha+10}f_8^{8\alpha-2}}\pmod 8.
\end{align}
Extracting the terms of the form $q^{2n+1}$ from both sides of the above equation, and then replacing $q^{2}$ with $q$ yields
	\begin{align}\label{el10}
	\sum_{n=0}^{\infty}\bar{a}_{2\alpha+1}(4n+2)q^n\equiv4\dfrac{f_2^{24\alpha+7}}{f_1^{16\alpha+10}f_4^{8\alpha-2}}\pmod 8.
	\end{align}
	Employing Lemma \ref{bt} in \eqref{el10} gives \eqref{e4n2o}.\\
	\indent	Extracting the terms of the form $q^{2n}$ from both sides of \eqref{el11}, and then replacing $q^{2}$ with $q$ yields 
		\begin{align*}
		\sum_{n=0}^{\infty}\bar{a}_{2\alpha+1}(4n)q^n\equiv\dfrac{f_2^{24\alpha+19}}{f_1^{16\alpha+14}f_4^{8\alpha+6}}\pmod 4.
		\end{align*}
			Employing \eqref{edf2} in the above equation, we obtain
		\begin{align*}
			\sum_{n=0}^{\infty}\bar{a}_{2\alpha+1}(4n)q^n\equiv\dfrac{f_2^{24\alpha+19}}{f_4^{8\alpha+6}}\left(\dfrac{f_8^5}{f_2^5f_{16}^2}\right)^{8\alpha+7}\sum_{i=0}^{8\alpha+7}\binom{8\alpha+7}{i}2^iq^i\dfrac{f_4^{2i}f_{16}^{4i}}{f_8^{6i}}\pmod 4.
		\end{align*}
		Extracting the terms of the form $q^{2n+1}$ from both sides of the above equation, and then replacing $q^{2}$ with $q$ yields
			\begin{align*}
			\sum_{n=0}^{\infty}\bar{a}_{2\alpha+1}(8n+4)q^n\equiv 2\dfrac{f_4^{40\alpha+29}}{f_1^{16\alpha+16}f_2^{8\alpha+4}f_8^{16\alpha+10}}\pmod 4.
			\end{align*}
			Employing Lemma \ref{bt} in the above equation gives \eqref{e8n4o}.\\
		\indent	Employing \eqref{edf2} in \eqref{el5}, we have
			\begin{align*}
			\sum_{n=0}^{\infty}\bar{a}_{2\alpha+1}(4n+1)q^n&\equiv 2\dfrac{f_2^{24\alpha+13}}{f_4^{8\alpha+2}}\left(\dfrac{f_8^5}{f_2^5f_{16}^2}\right)^{8\alpha+6}\sum_{i=0}^{8\alpha+6}\binom{8\alpha+6}{i}2^iq^i\dfrac{f_4^{2i}f_{16}^{4i}}{f_8^{6i}}\pmod 4.
			\end{align*}
			Extracting the terms of the form $q^{2n+1}$ from both sides of the above equation, and then replacing $q^{2}$ with $q$ gives
			\begin{align*}
			\sum_{n=0}^{\infty}\bar{a}_{2\alpha+1}(8n+5)q^n\equiv 8 \dfrac{f_4^{40\alpha+24}}{f_1^{16\alpha+17}f_2^{8\alpha}f_8^{16\alpha+8}}\pmod{16}.
			\end{align*}
			Employing Lemma \ref{bt} in the above equation results in \eqref{e8n5o}.\\
			\indent Employing \eqref{edf2} in \eqref{el7}, we have
			\begin{align*}
			\sum_{n=0}^{\infty}\bar{a}_{2\alpha+1}(4n+3)q^n\equiv 8\dfrac{f_2^{24\alpha+1}}{f_4^{8\alpha-6}}\left(\dfrac{f_8^5}{f_2^5f_{16}^2}\right)^{8\alpha+4}\sum_{i=0}^{8\alpha+4}\binom{8\alpha+4}{i}2^iq^i\dfrac{f_4^{2i}f_{16}^{4i}}{f_8^{6i}}\pmod{16}.
			\end{align*}
			Extracting the terms of the form $q^{2n+1}$ from both sides of the above equation, and then replacing $q^{2}$ with $q$ yields
			\begin{align*}
			\sum_{n=0}^{\infty}\bar{a}_{2\alpha+1}(8n+7)q^n\equiv 64\dfrac{f_4^{40\alpha+14}}{f_1^{16\alpha+19}f_2^{8\alpha-8}f_8^{16\alpha+4}}\pmod{128}.
			\end{align*}
			Employing Lemma \ref{bt} in the above equation, we arrive at \eqref{e8n7o}.
	\end{proof}

	\begin{lemma}\label{lgf3}
	For all $n\ge0$ and $\alpha, s\ge 1$, we have
	\begin{align}
		\sum_{n=0}^{\infty}\bar{a}_{2\alpha}(4n+2)q^n&\equiv 2f_1^{12}\pmod 4\label{e4n2e}\\
		\sum_{n=0}^{\infty}\bar{a}_{2\alpha}(8n+4)q^n&\equiv 4f_1^{12}\pmod{8}\label{e8n4e}\\
		\sum_{n=0}^{\infty}\bar{a}_{s}(8n+6)q^n&\equiv 8f_1^{18}\pmod{16}\label{e8n6}.
	\end{align}
	\end{lemma}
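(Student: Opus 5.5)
We prove the three congruences in the same way as Lemmas \ref{lgf1} and \ref{lgf2}: repeatedly $2$-dissect the generating function \eqref{gf} with Lemma \ref{ld}, extract the relevant arithmetic progression, and discard the binomial terms that vanish modulo the target power of $2$. At the end, Lemma \ref{bt} collapses the surviving eta-quotient (which carries an explicit power-of-$2$ factor) to a power of $f_1$. The reduction step works as follows: if the congruence is $2^aF \pmod{2^{a+1}}$, it suffices to know $F \pmod 2$. Modulo $2$ every $f_{2^j}$ can be written as $f_1^{2^j}$, and every inverse power $f_1^{-2t}$ as $f_2^{-t}$, so we only need to count the total exponent.

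\emph{The case $s=2\alpha$.} By \eqref{gf},
\[
\sum_{n\ge0}\bar a_{2\alpha}(n)q^n=\frac{f_2^{6\alpha-2}}{f_1^{4\alpha}f_4^{2\alpha-1}}.
\]
We write $1/f_1^{4\alpha}=(1/f_1^4)^\alpha$ and expand by \eqref{edf4}. This gives $(f_4^{14}/(f_2^{14}f_8^4))^\alpha\sum_i\binom{\alpha}{i}4^iq^i(f_2^4f_8^8/f_4^{12})^i$. The even part is, modulo $16$, just the $i=0$ term, since the $i=2$ term carries $16$. After $q^2\to q$ this gives
\[
\sum\bar a_{2\alpha}(2n)q^n\equiv \frac{f_2^{12\alpha+1}}{f_1^{8\alpha+2}f_4^{4\alpha}} \pmod{16}.
\]
Next we apply \eqref{edf2} to $1/f_1^2$ and \eqref{edf8} to $1/f_1^{8\alpha}=(1/f_1^8)^\alpha$.
\begin{itemize}
\item Modulo $4$, the only odd term is $2q\,f_4^2f_{16}^2/(f_2^5f_8)$ times the leading part. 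Extracting it and applying Lemma \ref{bt} should give $2f_1^{12}$, which is \eqref{e4n2e}.
\item Modulo $8$, the even part is a single eta-quotient, because the cross terms $2q\cdot 8\alpha q$ and $16q^2$ die. We dissect it once more with \eqref{edf2} or \eqref{edf4}, applied to the appropriate power of $1/f_1$. The odd part then picks up a factor $2\binom{M}{1}$ or $4\binom{M'}{1}$, and the higher binomial terms vanish modulo $8$. Applying Lemma \ref{bt} should give $4f_1^{12}$, which is \eqref{e8n4e}.
\end{itemize}

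\emph{The congruence \eqref{e8n6}.} We split by the parity of $s$.
\begin{itemize}
\item For $s=2\alpha+1$ we start from \eqref{el11}, the expression for $\sum\bar a_{2\alpha+1}(2n)q^n$, which holds modulo $8$. We must first redo it modulo $16$, keeping the $\binom{\alpha}{1}8q$ term of \eqref{edf8} and the $2q$ term of \eqref{edf2}. Then we take the odd part, which corresponds to $4n+2$, and dissect once more via \eqref{edf2} or \eqref{edf4}. The odd part of that dissection is $8n+6$; it should come out as $8\times$(an eta-quotient) modulo $16$, and Lemma \ref{bt} should reduce it to $8f_1^{18}$.
\item For $s=2\alpha$ we take the odd part of the modulo-$16$ expression above, which is $4\cdot(\ldots)$ coming from $2q$ times $2$ plus $8\alpha q$. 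We then dissect again and extract the odd part, which should give $8f_1^{18}$ modulo $16$.
\end{itemize}

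\emph{Main obstacle.} The expected difficulty is the $2$-adic bookkeeping in \eqref{e8n6} (and to a lesser extent in \eqref{e8n4e}). Several competing terms survive at each stage, and we must check which binomial coefficients $\binom{N}{i}2^i$ or $\binom{N}{i}4^i$ vanish modulo $16$. We must also check that, in the $s=2\alpha$ case, the contributions that depend on the parity of $\alpha$ combine into a single term independent of $\alpha$ modulo $16$. The plan is to handle this by writing each exponent $N$ explicitly in terms of $\alpha$ and computing $\binom{N}{1},\binom{N}{2},\binom{N}{3}$ modulo the needed powers of $2$ before extracting the progression.
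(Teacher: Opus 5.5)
This is essentially the paper's own argument (repeated $2$-dissection with Lemma~\ref{ld}, extraction, discarding binomial terms, then Lemma~\ref{bt}), and your outline goes through. You are right to insist on redoing the intermediate steps modulo $16$: the paper obtains \eqref{e8n6} from \eqref{el14} and \eqref{el10}, which it only establishes modulo $4$ and $8$.

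The obstacle you flag is harmless. Modulo $16$, $\sum_{n\ge0}\bar a_s(4n+2)q^n$ differs from the eta-quotients in \eqref{el14} and \eqref{el10} only by terms congruent to $8\alpha f_4^3$. There is one such term for $s=2\alpha$, and two that cancel for $s=2\alpha+1$. These terms are supported on exponents divisible by $4$, so they never reach $\bar a_s(8n+6)$.

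One slip: the odd part of $\sum_{n\ge0}\bar a_{2\alpha}(2n)q^n$ is $2(\cdots)+8\alpha(\cdots)$ modulo $16$, not $4(\cdots)$.
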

\begin{proof}
	Thanks to \eqref{gf}, we have
	\begin{align*}
	\sum_{n=0}^{\infty}\bar{a}_{2\alpha}(n)q^n=\dfrac{f_2^{6\alpha-2}}{f_1^{4\alpha}f_4^{2\alpha-1}}.
	\end{align*}
	Employing \eqref{edf4} in the above equation, we get
	\begin{align*}
	\sum_{n=0}^{\infty}\bar{a}_{2\alpha}(n)q^n=\dfrac{f_2^{6\alpha-2}}{f_4^{2\alpha-1}}\left(\dfrac{f_4^{14}}{f_2^{14}f_8^4}\right)^\alpha \sum_{i=0}^{\alpha}\binom{\alpha}{i}4^iq^i\dfrac{f_2^{4i}f_8^{8i}}{f_4^{12i}}.
	\end{align*}
	Extracting the terms of the form $q^{2n}$ from both sides of the above equation, and then replacing $q^{2}$ with $q$ yields
		\begin{align*}
			\sum_{n=0}^{\infty}\bar{a}_{2\alpha}(2n)q^n&=\dfrac{f_1^{6\alpha-2}}{f_2^{2\alpha-1}}\left(\dfrac{f_2^{14}}{f_1^{14}f_4^4}\right)^\alpha 
			\sum_{i=0}^{\alpha}\binom{\alpha}{2i}4^{2i}q^i\dfrac{f_1^{8i}f_4^{16i}}{f_2^{24i}}\\
			&\equiv\dfrac{f_2^{12\alpha+1}}{f_1^{8\alpha+2}f_4^{4\alpha}}\pmod 4.
		\end{align*}
		Employing \eqref{edf2} and \eqref{edf8} in the above equation, we obtain
		\begin{align}\label{el13}
			\sum_{n=0}^{\infty}\bar{a}_{2\alpha}(2n)q^n\equiv \dfrac{f_4^{24\alpha}}{f_2^{16\alpha+4}f_8^{8\alpha-5}f_{16}^2}+2q\dfrac{f_4^{24\alpha+2}f_{16}^2}{f_2^{16\alpha+4}f_8^{8\alpha+1}}\pmod 4.
		\end{align}
		Extracting the terms of the form $q^{2n+1}$ from both sides of \eqref{el13}, and then replacing $q^{2}$ with $q$ yields
		\begin{align}\label{el14}
		\sum_{n=0}^{\infty}\bar{a}_{2\alpha}(4n+2)q^n\equiv 2\dfrac{f_2^{24\alpha+2}f_8^2}{f_1^{16\alpha+4}f_4^{8\alpha+1}}\pmod 4.
		\end{align}
		Employing Lemma \ref{bt} in the above equation gives \eqref{e4n2e}.\\
	\indent	Extracting the terms of the form $q^{2n}$ from both sides of \eqref{el13}, and then replacing $q^{2}$ with $q$ yields
		\begin{align*}
		\sum_{n=0}^{\infty}\bar{a}_{2\alpha}(4n)q^n\equiv \dfrac{f_2^{24\alpha}}{f_1^{16\alpha+4}f_4^{8\alpha-5}f_8^2}\pmod 4.
		\end{align*}
		Employing \eqref{edf2} in the above equation, we obtain
		\begin{align*}
		\sum_{n=0}^{\infty}\bar{a}_{2\alpha}(4n)q^n\equiv \dfrac{f_2^{24\alpha}}{f_4^{8\alpha-5}f_8^2}\left(\dfrac{f_8^5}{f_2^5f_{16}^2}\right)^{8\alpha+2}\sum_{i=0}^{8\alpha+2}\binom{8\alpha+2}{i}2^iq^i\dfrac{f_4^{2i}f_{16}^{4i}}{f_8^{6i}}\pmod 4.
		\end{align*}
		Extracting the terms of the form $q^{2n+1}$ from both sides of \eqref{el13}, and then replacing $q^{2}$ with $q$ gives
		\begin{align*}
		\sum_{n=0}^{\infty}\bar{a}_{2\alpha}(8n+4)q^n&\equiv\dfrac{f_1^{24\alpha}}{f_2^{8\alpha-5}f_4^2}\left(\dfrac{f_4^5}{f_1^5f_8^2}\right)^{8\alpha+2}\sum_{i=0}^{8\alpha+2}\binom{8\alpha+2}{2i+1}2^{2i+1}q^i\dfrac{f_4^{4i+2}f_{16}^{8i+4}}{f_8^{12i+6}}\pmod 4\\
		&\equiv \dfrac{f_4^{40\alpha+2}}{f_1^{16\alpha+10}f_2^{8\alpha-7}f_8^{16\alpha}}\pmod 8.
		\end{align*}
		Employing Lemma \ref{bt} in the above equation results in \eqref{e8n4e}.\\
		\indent Employing \eqref{edf2} in \eqref{el14}, we obtain
		\begin{align*}
		\sum_{n=0}^{\infty}\bar{a}_{2\alpha}(4n+2)q^n\equiv 2\dfrac{f_2^{24\alpha+2}f_8^2}{f_4^{8\alpha+1}}\left(\dfrac{f_8^5}{f_2^5f_{16}^2}\right)^{8\alpha+2}\sum_{i=0}^{8\alpha+2}\binom{8\alpha+2}{i}2^iq^i\dfrac{f_4^{2i}f_{16}^{4i}}{f_8^{6i}}\pmod 4.
		\end{align*}
		Extracting the terms of the form $q^{2n+1}$ from both sides of the above equation, and then replacing $q^{2}$ with $q$ yields
		\begin{align*}
		\sum_{n=0}^{\infty}\bar{a}_{2\alpha}(8n+6)q^n\equiv 8\dfrac{f_4^{40\alpha+6}}{f_1^{16\alpha+8}f_2^{8\alpha-1}f_8^{16\alpha}}\pmod{16}.
		\end{align*}
			Employing Lemma \ref{bt} in the above equation, we get
		\begin{align}\label{el15}
			\sum_{n=0}^{\infty}\bar{a}_{2\alpha}(8n+6)q^n\equiv 8f_1^{18}\pmod{16}.
		\end{align}
		Employing \eqref{edf2} in \eqref{el10}, we obtain
		\begin{align*}
	    \sum_{n=0}^{\infty}\bar{a}_{2\alpha+1}(4n+2)q^n\equiv 4\dfrac{f_2^{24\alpha+7}}{f_4^{8\alpha-2}}\left(\dfrac{f_8^5}{f_2^5f_{16}^2}\right)^{8\alpha+5}\sum_{i=0}^{8\alpha+5}\binom{8\alpha+5}{i}2^iq^i\dfrac{f_4^{2i}f_{16}^{4i}}{f_8^{6i}}\pmod 8.
		\end{align*}
		Extracting the terms of the form $q^{2n+1}$ from both sides of the above equation, and then replacing $q^{2}$ with $q$ gives
		\begin{align*}
		\sum_{n=0}^{\infty}\bar{a}_{2\alpha+1}(8n+6)q^n\equiv8\dfrac{f_4^{40\alpha+19}}{f_1^{16\alpha+18}f_2^{8\alpha-4}f_8^{16\alpha+6}}\pmod {16}.
		\end{align*}
		Employing Lemma \ref{bt} in the above equation, we get
	\begin{align}\label{el16}
    \sum_{n=0}^{\infty}\bar{a}_{2\alpha+1}(8n+6)q^n\equiv 8f_1^{18}\pmod{16}
	\end{align}
	Combining \eqref{el15} and \eqref{el16}, we arrive at \eqref{e8n6}.
\end{proof}
\begin{proof}[Proof of Theorem \ref{t2n1}\unskip]
Define 
\begin{align}\label{e1.3.1}
\sum_{n=0}^{\infty}c_1(n)q^n:=f_1^4f_2^4.
\end{align}
Thanks to Lemma \ref{lnew59}, for prime $p\equiv1 \pmod 2$ and all $n\ge 0$, we have
\begin{align*}
c_1\left(np+\dfrac{p-1}{2}\right)=c_1\left(\dfrac{p-1}{2}\right)c_1(n)-(-1)^{2(p-1)}	\left(\dfrac{2}{p}\right)^4 p^3 c_1\left(\dfrac{n-\frac{p-1}{2}}{p}\right),
\end{align*}
which implies
\begin{align}\label{e1.3.2}
c_1\left(np+\dfrac{p-1}{2}\right)=c_1\left(\dfrac{p-1}{2}\right)c_1(n)-p^3c_1\left(\dfrac{n-\frac{p-1}{2}}{p}\right).
\end{align}
If $p\nmid(2n+1)$, then $c_1\left(\dfrac{n-\frac{p-1}{2}}{p}\right)=0$, since $\dfrac{n-\frac{p-1}{2}}{p}$ cannot be an integer. Hence, from \eqref{e1.3.2} we see that if $p\nmid(2n+1)$, then
\begin{align}\label{e1.3.3}
c_1\left(np+\dfrac{p-1}{2}\right)=c_1\left(\dfrac{p-1}{2}\right)c_1(n).
\end{align}
It follows from \eqref{e1.3.3} that if $p\nmid(2n+1)$ and $c_1\left(\dfrac{p-1}{2}\right)\equiv 0\pmod 2$, then 
\begin{align}\label{e1.3.4}
c_1\left(pn+\dfrac{p-1}{2}\right)\equiv 0\pmod 2.
\end{align}
Replacing $n$ by $pn+\dfrac{p-1}{2}$ in \eqref{e1.3.2}, we obtain
\begin{align}\label{e1.3.5}
c_1\left(p^2n+\dfrac{p^2-1}{2}\right)= c_1\left(\dfrac{p-1}{2}\right)c_1\left(pn+\dfrac{p-1}{2}\right)-p^3c_1(n).
\end{align}
From \eqref{e1.3.5}, it is clear that if $c_1\left(\dfrac{p-1}{2}\right)\equiv 0\pmod 2$, then for $n\ge0$, 
\begin{align}\label{e1.3.6}
c_1\left(p^2n+\dfrac{p^2-1}{2}\right)\equiv c_1(n)\pmod 2.
\end{align}
By \eqref{e1.3.6} and mathematical induction, if $c_1\left(\dfrac{p-1}{2}\right)\equiv 0\pmod 2$, then for $n\ge 0$ and $k\ge 0$, we have
\begin{align}\label{e1.3.7}
c_1\left(p^{2k}n+\dfrac{p^{2k}-1}{2}\right)\equiv c_1(n)\pmod 2.
\end{align}
Replacing $n$ by $pn+\dfrac{p-1}{2}$ in \eqref{e1.3.7} and using \eqref{e1.3.4}, we deduce that if $p\nmid(2n+1)$ and $c_1\left(\dfrac{p-1}{2}\right)\equiv 0\pmod 2$, then 
\begin{align}\label{e1.3.8}
c_1\left(p^{2k+1}n+\dfrac{p^{2k+1}-1}{2}\right)\equiv 0\pmod 2.
\end{align}
Since $\bar{a}_s(n)\equiv 0\pmod2$ for all $n\ge 1$, though not required, we illustrate and include the proof of the case $c_1\left(\dfrac{p-1}{2}\right)\equiv 1\pmod 2$ for completeness.\\
From \eqref{e1.3.2}, we obtain that if $c_1\left(\dfrac{p-1}{2}\right)\equiv 1\pmod 2$, then 
\begin{align}\label{e1.3.9}
c_1\left(pn+\dfrac{p-1}{2}\right)\equiv c_1(n)+c_1\left(\dfrac{n-\frac{p-1}{2}}{p}\right)\pmod 2.
\end{align}
Replacing $n$ by $pn+\dfrac{p-1}{2}$ in \eqref{e1.3.9} yields
\begin{align*}
c_1\left(p^2n+\dfrac{p^2-1}{2}\right)\equiv c_1\left(pn+\dfrac{p-1}{2}\right)+c_1(n)\pmod 2.
\end{align*}
Hence, from above equation and \eqref{e1.3.9}, we deduce that
\begin{align}\label{e1.3.10}
c_1\left(p^2n+\dfrac{p^2-1}{2}\right)\equiv c_1\left(\dfrac{n-\frac{p-1}{2}}{p}\right)\pmod 2.
\end{align}
By \eqref{e1.3.10}, we find that if $c_1\left(\dfrac{p-1}{2}\right)\equiv 1\pmod 2$ and $p\nmid(2n+1)$, then 
\begin{align}\label{e1.3.11}
c_1\left(p^2n+\dfrac{p^2-1}{2}\right)\equiv 0\pmod 2.
\end{align}
Replacing $n$ by $pn+\dfrac{p-1}{2}$ in \eqref{e1.3.10} yields
\begin{align}\label{e1.3.12}
c_1\left(p^3n+\dfrac{p^3-1}{2}\right)\equiv c_1(n)\pmod 2.
\end{align}
By \eqref{e1.3.12} and mathematical induction, for $n,k\ge 0$, we have
\begin{align}\label{e1.3.13}
c_1\left(p^{3k}n+\dfrac{p^{3k}-1}{2}\right)\equiv c_1(n)\pmod 2.
\end{align}
Replacing $n$ by $p^2n+\dfrac{p^2-1}{2}$ in \eqref{e1.3.13} and using \eqref{e1.3.11}, we see that if $c_1\left(\dfrac{p-1}{2}\right)\equiv 1\pmod 2$, then for $n,k\ge 0$ with $p\nmid (2n+1)$, we have
\begin{align}\label{e1.3.14}
c_1\left(p^{3k+2}n+\dfrac{p^{3k+2}-1}{2}\right)\equiv 0\pmod 2.
\end{align}
From \eqref{e2n1}, \eqref{e2no}, and \eqref{e1.3.1} for $n\ge 0$, $m,\alpha,\beta\ge 1$ with $2\nmid\alpha$, we have
\begin{align}
\bar{a}_{2^m\alpha}(2n+1)&\equiv 2^{m+1}c_1(n)\pmod{2^{m+2}}\label{e1.3.15}\\
\bar{a}_{2\beta+1}(2n+1)&\equiv 2c_1(n)\pmod 4.\label{e1.3.16}
\end{align}
Therefore, the desired result follows from \eqref{e1.3.8}, \eqref{e1.3.15} and \eqref{e1.3.16}.
\end{proof}
\begin{proof}[Proof of Theorem \ref{t4n1he}\unskip]
Thanks to \eqref{e4n1}, we have
\begin{align*}
\sum_{n=0}^{\infty}\bar{a}_{2^m\alpha}(4n+1)q^{4n+1}\equiv 2^{m+1}q f_4^6\pmod{2^{m+2}},
\end{align*}
which implies
\begin{align}\label{e1.4.1}
\sum_{n=0}^{\infty}\bar{a}_{2^m\alpha}(4n+1)q^{4n+1}\equiv 2^{m+1}\eta^6(4z)\pmod{2^{m+2}}.
\end{align}
By Theorems \ref{t2.1} and \ref{t2.2}, it is easy to see that $\eta^6(4z)\in S_3\left(\Gamma_0(16), \left(\frac{-4^6}{.}\right)\right)$. Let $\sum_{n=1}^{\infty}b(n)$ be the Fourier expansion of  $\eta^6(4z)$. Then $b(n)=0$, if $n\not\equiv 1\pmod 4$ for all $n\ge 0$.\\ Thus, comparing coefficients in \eqref{e1.4.1}, we obtain
\begin{align}\label{e1.4.3}
\bar{a}_{2^m\alpha}(4n+1)\equiv 2^{m+1}b(n+1)\pmod{2^{m+2}}.
\end{align}
Since  $\eta^6(4z)$ is a Hecke eigenform (see \cite{martin}), we have
\begin{align}\label{e1.4.4}
\eta^6(4z)\mid T_p=\sum_{n=1}^{\infty}\left(b(pn)+p^2\left(\frac{-4^6}{p}\right)b\left(\frac{n}{p}\right)\right)q^n=\lambda(p)\sum_{n=1}^{\infty}b(n)q^n.
\end{align}
Comparing the coefficients in \eqref{e1.4.4}, we get
\begin{align}\label{e1.4.5}
b(pn)+p^2\left(\frac{-1}{p}\right)b\left(\frac{n}{p}\right)=\lambda(p)b(n).
\end{align}
 Setting $n=1$ in \eqref{e1.4.5}, and noting $b(1)=1$, we obtain
\begin{align}\label{e1.4.6}
b(p)=\lambda(p).
\end{align}
Since $b(p)=0$ for all $p\not\equiv 1\pmod 4$, equation \eqref{e1.4.6} implies
\begin{align*}
\lambda(p)=0, \text{ for all } p\not\equiv 1\pmod 4.	
\end{align*}
Also, note that $\left(\frac{-1}{p}\right)=-1$ if $p\not\equiv 1\pmod 4$. Thus, from \eqref{e1.4.5} we derive for all $p\not\equiv 1\pmod 4$,
\begin{align}\label{e1.4.7}
b(pn)-p^2b\left(\frac{n}{p}\right)=0.
\end{align}
Replacing $n$ by $pn+r$ with $gcd(n,r)=1$ in \eqref{e1.4.7}, we see that for all $n\ge 0$ with $p\nmid n$,
\begin{align}\label{e1.4.8}
b(p^2n+pr)=0.
\end{align}
Replacing $n$ by $4n-pr+1$ in \eqref{e1.4.8} and employing the resulting equation in \eqref{e1.4.3}, we obtain
\begin{align}\label{e1.4.10}
\bar{a}_{2^m\alpha}\left(4np^2+pr(1-p^2)+p^2\right)\equiv 0\pmod {2^{m+2}},
\end{align}
with $gcd(r,p)=1$.\\
Since $p\equiv 3\pmod 4$, we have $4\mid(1-p^2)$ and $gcd(1-p^2,p)=1$. Thus, when $r$ runs over a residue system excluding the multiples of $p$, so does $r(1-p^2)$. Therefore, for $p\nmid j$, \eqref{e1.4.10} can be rewritten as 
\begin{align}\label{e1.4.11}
\bar{a}_{2^m\alpha}(4p^2n+pj+p^2)\equiv 0\pmod {2^{m+2}}.
\end{align}
Replacing $n$ by $pn$ in \eqref{e1.4.7}, we obtain
\begin{align}\label{e1.4.12}
b(p^2n)=p^2b(n).
\end{align}
Further, replacing $n$ by $4n+1$ in \eqref{e1.4.12}, we get
\begin{align}\label{e1.4.13}
b(4p^2n+p^2)=p^2b(4n+1).
\end{align}
From \eqref{e1.4.3} and \eqref{e1.4.13}, we get
\begin{align}
\bar{a}_{2^m\alpha}(4p^2n+p^2)\equiv p^2\bar{a}_{2^m\alpha}(4n+1)\pmod{2^{m+2}}.
\end{align}
Let $p_i\ge 3$ be primes such that $p_i\equiv 3\pmod 4$. Since,
\begin{align*}
4p_1^2p_2^2\cdots p_k^2 n+p_1^2p_2^2\cdots p_k^2=4p_1^2\left(p_2^2\cdots p_k^2 n+\dfrac{p_2^2\cdots p_k^2 -1}{4}\right)+p_1^2,
\end{align*}
using \eqref{e1.4.13} repeatedly and then employing \eqref{e1.4.11}, we obtain
\begin{align*}
&\bar{a}_{2^m\alpha}\left(4p_1^2p_2^2\cdots p_k^2p_{k+1}^2n+p_1^2p_2^2\cdots p_k^2p_{k+1}\left(j+p_{k+1}\right)\right)\\
&\equiv p_1^2 \bar{a}_{2^m\alpha}\left(4p_1^2p_2^2\cdots p_k^2p_{k+1}^2n+p_1^2p_2^2\cdots p_k^2p_{k+1}\left(j+p_{k+1}\right)\right)\\
\vdots\\
&\equiv (p_1p_2\cdots p_k)^2\bar{a}_{2^m\alpha}\left(4p_{k+1}^2n+p_{k+1}(j+p_{k+1})\right)\\
&\equiv 0\pmod{2^{m+2}},
\end{align*}
when $j\not\equiv 0\pmod{p_{k+1}}$. This completes our proof.
\end{proof}
\begin{proof}[Proof of Theorem \ref{t4n1t1}\unskip]
		Replacing $n$ by $4n+3$, in \eqref{e1.4.7}, we see that for any prime $p\equiv 3\pmod 4$,
	\begin{align}\label{e1.4.14}
		b(4pn+3p)=p^2b\left(\dfrac{4n+3}{p}\right).
	\end{align}
	Replacing $n$ by $p^kn+r$ with $p\nmid r$ in \eqref{e1.4.14}, we get
	\begin{align}\label{e1.4.15}
	b\left(4\left(p^{k+1}n+pr+\dfrac{3p-1}{4}\right)+1\right)=p^2b\left(4\left(p^{k-1}n+\dfrac{4r+3-p}{4p}\right)+1\right).
	\end{align}
	Note that $\frac{3p-1}{4}$ and $\frac{4r+3-p}{4p}$ are integers. Combining \eqref{e1.4.3} and \eqref{e1.4.15}, we obtain the desired result.
\end{proof}
\begin{proof}[Proof of Corollary \ref{c4n12}\unskip]
Let $p$ be a prime such that $p\equiv 3\pmod 4$. 	We choose a non-negative integer $r$ such that $4r+3=p^{2k-1}$. Replacing $k$ by $2k-1$ in Theorem \ref{t4n1t1}, we obtain
\begin{align*}
\bar{a}_{2^m\alpha}\left(4p^{2k}n+p^{2k}\right)&\equiv p^2\bar{a}_{2^m\alpha}(4p^{2k-2}n+p^{2k-2})\\
&\equiv\hdots\equiv (-p^2)^{k}\bar{a}_{2^m\alpha}(4n+1)\pmod{2^{m+2}}.
\end{align*}
Hence the result.
\end{proof}
\begin{proof}[Proof of Theorem \ref{t4n1}\unskip]
	Define
	\begin{align}\label{e1.2.0}
		\sum_{n=0}^{\infty}c_2(n)q^n:=f_1^6
	\end{align}
	Thanks to Lemma \ref{lnew55}, for prime $p\equiv 3\pmod 4$ and all $n\ge 0$, we have
	\begin{align*}
		c_2\left(np+\dfrac{p^2-1}{4}\right)=p^2c_2\left(\dfrac{n}{p}\right),
	\end{align*}
	which implies
	\begin{align}\label{e1.2.1}
		c_2\left(np+\dfrac{p^2-1}{4}\right)\equiv c_2\left(\dfrac{n}{p}\right)\pmod 2.
	\end{align}
	If $p\nmid n$, then it follows from \eqref{e1.2.1} that
	\begin{align}\label{e1.2.2}
		c_2\left(np+\dfrac{p^2-1}{4}\right)\equiv 0\pmod 2.
	\end{align}
	Replacing $n$ by $pn$ in \eqref{e1.2.1}, we obtain
	\begin{align}\label{e1.2.3}
		c_2\left(p^2n+\dfrac{p^2-1}{4}\right)\equiv c_2(n)\pmod 2.
	\end{align}
	By mathematical induction and \eqref{e1.2.3}, for $n\ge 0$ and $k\ge0$, we obtain
	\begin{align}\label{e1.2.4}
		c_2\left(p^{2k}n+\dfrac{p^{2k}-1}{4}\right)\equiv c_2(n)\pmod 2.
	\end{align}
	Replacing $n$ by $pn+\dfrac{p^2-1}{4}$ in the above equation, we deduce that if $p\nmid n$, then for $n\ge 0$ and $k\ge 0$, we have
	\begin{align}\label{e1.2.5}
		c_2\left(np^{2k+1}+\dfrac{p^{2k+2}-1}{4}\right)\equiv 0\pmod 2.
	\end{align}
	From \eqref{e4n1}, \eqref{e4no}, and  \eqref{e1.2.0}, for $n\ge0$ and $m,\alpha,\beta\ge 1$, we have
	\begin{align}
		\bar{a}_{2^m\alpha}(4n+1)&\equiv 2^{m+1}c_2(n)\pmod{2^{m+2}}\label{e1.2.6}\\
		\bar{a}_{2\beta+1}(4n+1)&\equiv 2c_2(n)\pmod 4.\label{e1.2.7}
	\end{align}
	Therefore, the desired result follows from \eqref{e1.2.5}, \eqref{e1.2.6}, and \eqref{e1.2.7}.
\end{proof}
\begin{proof}[Proof of Theorem \ref{t4n2}\unskip]
	From \eqref{e4n2o}, \eqref{e4n2e}, and \eqref{e1.3.1} for $n\ge 0$, $m,\alpha,\beta\ge 1$, we have
	\begin{align}
		\bar{a}_{2\alpha}(4n+2)&\equiv 2c_1(n)\pmod{4}\label{e1.3.17}\\
		\bar{a}_{2\beta+1}(4n+2)&\equiv 4c_1(n)\pmod 8.\label{e1.3.18}
	\end{align}
	Therefore, the desired result follows from \eqref{e1.3.8}, \eqref{e1.3.17} and \eqref{e1.3.18}.
\end{proof}
\begin{proof}[Proof of Theorem \ref{t4n3}\unskip]
		Define
	\begin{align}
		\sum_{n=0}^{\infty}c_3(n)q^n:=f_1^{18}.\label{e1.1.0}
	\end{align}
	Thanks to Lemma \ref{lnew53}, for $p\equiv 1\pmod 4$ and all $n\ge 0$, we have
	\begin{align*}
		c_3\left(pn+\dfrac{3(p-1)}{4}\right)=c_3\left(\dfrac{3(p-1)}{4}\right)c_3(n)-p^8c_3\left(\frac{n-\frac{3(p-1)}{4}}{p}\right).
	\end{align*}
	Reducing the above to modulo $2$, we obtain
	\begin{align}
		c_3\left(pn+\dfrac{3(p-1)}{4}\right)\equiv c_3\left(\dfrac{3(p-1)}{4}\right)c_3(n)+c_3\left(\frac{n-\frac{3(p-1)}{4}}{p}\right)\pmod 2.\label{e1.1.1}
	\end{align}
	If $c_3\left(\dfrac{3(p-1)}{4}\right)\equiv 0\pmod 2$, it follows from \eqref{e1.1.1} that
	\begin{align}
		c_3\left(pn+\dfrac{3(p-1)}{4}\right)\equiv c_3\left(\frac{n-\frac{3(p-1)}{4}}{p}\right)\pmod 2.\label{e1.1.2}
	\end{align}
Hence, if $c_3\left(\dfrac{3(p-1)}{4}\right)\equiv 0\pmod 2$ and $p\nmid(4n+3)$, then from \eqref{e1.1.2} we arrive at
\begin{align}
	c_3\left(pn+\dfrac{3(p-1)}{4}\right)\equiv 0\pmod 2.	\label{e1.1.3}
	\end{align}
	Replacing $n$ by $pn+\frac{3(p-1)}{4}$ in \eqref{e1.1.1}, we get
	\begin{align}
		c_3\left(p^2n+\dfrac{3(p^2-1)}{4}\right)\equiv c_3\left(\dfrac{3(p-1)}{4}\right)c_3\left(pn+\dfrac{3(p-1)}{4}\right)+c_3(n)\pmod 2.\label{e1.1.4}
	\end{align}
	If $c_3\left(\dfrac{3(p-1)}{4}\right)\equiv 0\pmod 2$, then from \eqref{e1.1.4}, we see that for $n\ge 0$,
	\begin{align}
		c_3\left(p^2n+\dfrac{3(p^2-1)}{4}\right)\equiv c_3(n)\pmod 2.\label{e1.1.5}
	\end{align}
	In view of \eqref{e1.1.5} and mathematical induction, we arrive at
	\begin{align}
		c_3\left(p^{2k}n+\dfrac{3(p^{2k}-1)}{4}\right)\equiv c_3(n)\pmod 2.\label{e1.1.6}
	\end{align}
	On replacing $n$ by $pn+\frac{3(p-1)}{4}$ in \eqref{e1.1.6}and employing \eqref{e1.1.3}, we see that if $c_3\left(\frac{3(p-1)}{4}\right)\equiv 0\pmod 2$ and $p\nmid(4n+3)$, then for $n\ge 0$ and $k\ge 0$, we obtain
	\begin{align}
		c_3\left(p^{2k+1}n+\dfrac{3(p^{2k+1}-1)}{4}\right)\equiv 0\pmod 2.\label{e1.1.8}
	\end{align}
	Since $\bar{a}_s(n)\equiv 0\pmod2$ for all $n\ge 1$, though not required, we illustrate and include the proof of the case $c_3\left(\dfrac{p-1}{2}\right)\equiv 1\pmod 2$ for completeness.\\
	If $c_3\left(\frac{3(p-1)}{4}\right)\equiv 1\pmod 2$, then from \eqref{e1.1.1}, we have
	\begin{align}\label{e1.1.9a}
	c_3\left(pn+\dfrac{3(p-1)}{4}\right)\equiv c_3(n)+c_3\left(\dfrac{n-\frac{3(p-1)}{4}}{p}\right)\pmod 2.
	\end{align}
	Replacing $n$ by $pn+\dfrac{3(p-1)}{4}$ in \eqref{e1.1.9a}, we get
	\begin{align}
		c_3\left(p^2n+\dfrac{3(p^2-1)}{4}\right)\equiv c_3\left(pn+\dfrac{3(p-1)}{4}\right)+c_3(n)\pmod 2.\label{e1.1.9}
	\end{align}
	Using \eqref{e1.1.9a} in the above equation, we obtain
	\begin{align}\label{e1.1.9b}
	c_3\left(p^2n+\dfrac{3(p^2-1)}{4}\right)\equiv c_3\left(\dfrac{n-\frac{3(p-1)}{4}}{p}\right)\pmod 2.
	\end{align}
	If $p\nmid(4n+3)$, then $c_3\left(\dfrac{n-\frac{3(p-1)}{4}}{p}\right)=0$. Hence, from the above equation, we get
	\begin{align}\label{e1.1.15}
	c_3\left(p^2n+\dfrac{3(p^2-1)}{4}\right)\equiv 0\pmod 2.
	\end{align}
	Replacing $n$ by $pn+\dfrac{3(p-1)}{4}$ in \eqref{e1.1.9b} gives
	\begin{align}\label{e1.1.16}
	c\left(p^3n+\dfrac{3(p^3-1)}{4}\right)\equiv c(n)\pmod 2.
	\end{align}
	By \eqref{e1.1.16} and mathematical induction, for $n\ge 0$ and $k\ge 0$, we have
	\begin{align}
		c_3\left(p^{3k}n+\dfrac{3(p^{3k}-1)}{4}\right)\equiv c_3(n)\pmod 2.\label{e1.1.10}
	\end{align}
	Replacing $n$ by $p^2+\dfrac{p^2-1}{2}$ in \eqref{e1.1.10} and using \eqref{e1.1.15}, we find that if $c_3\left(\dfrac{3(p-1)}{4}\right)\equiv 1\pmod 2$, then for $n\ge 0$ and $k\ge0$ with $p\nmid(4n+3)$, we have
	\begin{align}\label{e1.1.17}
	c_3\left(p^{3k+2}n+\dfrac{3(p^{3k+2}-1)}{4}\right)\equiv 0\pmod 2.
	\end{align}
	From \eqref{e4n}, \eqref{e4n3o}, and \eqref{e1.1.0}, for $n\ge0$, $m,\alpha,\beta\ge1$ with $2\nmid \alpha$, we have
	\begin{align}
		\bar{a}_{2^m\alpha}(4n+3)&\equiv 2^{m+2}c_3(n)\pmod {2^{m+3}}\label{e1.1.12}\\
		\bar{a}_{2\beta+1}(4n+3)&\equiv 8c_3(n)\pmod{16}.\label{e1.1.13}
	\end{align}
	Therefore, the desired result follows from \eqref{e1.1.8}, \eqref{e1.1.12},  and \eqref{e1.1.13}.
\end{proof}
\begin{proof}[Proof of Theorem \ref{t8n4}\unskip]
		From \eqref{e8n4o}, \eqref{e8n4e}, and \eqref{e1.3.1} for $n\ge 0$, $m,\alpha,\beta\ge 1$, we have
	\begin{align}
		\bar{a}_{2\alpha}(8n+4)&\equiv 4c_1(n)\pmod{8}\label{e1.3.19}\\
		\bar{a}_{2\beta+1}(8n+4)&\equiv 2c_1(n)\pmod 4.\label{e1.3.20}
	\end{align}
	Therefore, the desired result follows from \eqref{e1.3.8}, \eqref{e1.3.19} and \eqref{e1.3.20}.
\end{proof}
	\begin{proof}[Proof of Theorem \ref{t8n5}\unskip]
		The proof is identical to the proof of Theorem \ref{t8n7}, hence we omit the details. 
\end{proof}
		\begin{proof}[Proof of Theorem \ref{t8n6}\unskip]
		From \eqref{e8n6} and \eqref{e1.1.0}, for $n\ge 0$ and $s\ge 1$, we have
		\begin{align}\label{e1.1.14}
			\bar{a}_{s}(8n+6)\equiv 8c_3(n)\pmod{16}.
		\end{align}
		Therefore, the desired result follows from \eqref{e1.1.8} and \eqref{e1.1.14}.
	\end{proof}

		\begin{proof}[Proof of Theorem \ref{t8n7}\unskip]
		Thanks to Lemma \ref{lnew62}, from \eqref{e1.1}, we have
		\begin{align}\label{e3.1}
			c_4\displaystyle\left(p^2n+\dfrac{7(p^2-1)}{8}\right)=\gamma(n)c_4(n)-p^9c_4\displaystyle\left(\frac{n-\frac{7(p^2-1)}{8}}{p^2}\right),
		\end{align}
		where
		\begin{align}\label{e3.2}
			\gamma(n)=p^9\alpha-p^4\displaystyle\left(\frac{2n-\frac{7(p^2-1)}{4}}{p}\right)_L.
		\end{align}
		We set $n=0$ in \eqref{e3.1}, to obtain
		\begin{align}\label{e3.3}
			c_4\displaystyle\left(\frac{7(p^2-1)}{8}\right)=\gamma(0).
		\end{align}
		Setting $n=0$ in \eqref{e3.2} and using \eqref{e3.3}, yields
		\begin{align}\label{e3.4}
			p^9\alpha=c_4\displaystyle\left(\frac{7(p^2-1)}{8}\right)+p^4\displaystyle\left(\frac{\frac{-7(p^2-1)}{4}}{p}\right)_L:=\kappa(p).
		\end{align}
		Using \eqref{e3.3} and \eqref{e3.4}, equation \eqref{e3.1} can be rewritten as
		\begin{align}\label{e3.5}
			c_4\displaystyle\left(p^2n+\dfrac{7(p^2-1)}{8}\right)\equiv\displaystyle\left(\kappa(p)-\displaystyle\left(\frac{2n-\frac{7(p^2-1)}{4}}{p}\right)_L\right)c_4(n)+c_4\displaystyle\left(\frac{n-\frac{7(p^2-1)}{8}}{p^2}\right)\pmod 2,
		\end{align}
		where prime $p\ge3$.
		Replacing $n$ by $pn+\frac{7(p^2-1)}{8}$ in \eqref{e3.5} yields
		\begin{align}\label{e3.6}
			c_4\displaystyle\left(p^3n+\frac{7(p^4-1)}{8}\right)\equiv\kappa(p)c_4\displaystyle\left(pn+\frac{7(p^2-1)}{8}\right)+c_4\displaystyle\left(\frac{n}{p}\right)\pmod 2.
		\end{align}
		If $\kappa(p)\equiv0\pmod2$ in \eqref{e3.6}, we have
		\begin{align}\label{e3.7}
			c_4\displaystyle\left(p^3n+\frac{7(p^4-1)}{8}\right)\equiv c_4\displaystyle\left(\frac{n}{p}\right)\pmod2.
		\end{align}
		Replacing $n$ by $pn$ in \eqref{e3.7} gives 
		\begin{align}\label{e3.8}
			c_4\displaystyle\left(p^4n+\frac{7(p^4-1)}{8}\right)\equiv c_4(n)\pmod2.	
		\end{align}
		By mathematical induction on \eqref{e3.8}, we obtain for all $k\ge0$,
		\begin{align}\label{e3.9}
			c_4\displaystyle\left(p^{4k}n+\frac{5(p^{4k}-1)}{24}\right)\equiv c_4(n)\pmod2.	
		\end{align}
		From \eqref{e3.6}, if $\kappa(p)\equiv0\pmod2$ and $p\nmid n$, then we have
		\begin{align}\label{e3.10}
			c_4\displaystyle\left(p^3n+\frac{7(p^4-1)}{8}\right)\equiv0\pmod2.
		\end{align}
		Now, replacing $n$ by $p^3n+\frac{7(p^4-1)}{8}$ in \eqref{e3.9} and employing \eqref{e3.10}, we note that if $\kappa(p)\equiv 0\pmod 2$ and $p\nmid n$, then
		\begin{align}\label{e3.11}
			c_4\displaystyle\left(p^{4k+3}n+\frac{7(p^{4k+4}-1)}{8}\right)\equiv0\pmod2
		\end{align}
		From \eqref{e3.6} with $n$ replaced by $pn$, it follows that if $\kappa(p)\equiv 1\pmod 2$, then
		\begin{align}\label{e3.12}
			c_4\displaystyle\left(p^4n+\frac{7(p^4-1)}{8}\right)\equiv c_4\displaystyle\left(p^2n+\frac{7(p^2-1)}{8}\right)+c_4(n)\pmod 2.
		\end{align}
		Replacing $n$ by $pn+\frac{7(p^2-1)}{8}$ in \eqref{e3.12} gives
		\begin{align}\label{e3.13}
			c_4\displaystyle\left(p^5n+\frac{7(p^6-1)}{8}\right)\equiv c_4\displaystyle\left(p^3n+\frac{7(p^4-1)}{8}\right)+c_4\displaystyle\left(pn+\frac{7(p^2-1)}{8}\right)\pmod 2.
		\end{align}
		From \eqref{e3.6} and \eqref{e3.13}, we deduce that if $\kappa(p)\equiv 1\pmod 2$, then
		\begin{align}\label{e3.14}
			c_4\displaystyle\left(p^5n+\frac{7(p^6-1)}{8}\right)\equiv c_4\displaystyle\left(\frac{n}{p}\right) \pmod 2.
		\end{align}
		If $p\nmid n$, then \eqref{e3.14} yields
		\begin{align}\label{e3.15}
			c_4\displaystyle\left(p^5n+\frac{7(p^6-1)}{8}\right)\equiv0\pmod 2.
		\end{align}
		Replacing $n$ by $pn$ in \eqref{e3.14} gives
		\begin{align}\label{e3.16}
			c_4\displaystyle\left(p^6n+\frac{7(p^6-1)}{8}\right)\equiv c_4(n) \pmod 2.
		\end{align}
		By induction on \eqref{e3.16}, we see that for all $k\ge0$,
		\begin{align}\label{e3.17}
			c_4\displaystyle\left(p^{6k}n+\frac{7(p^{6k}-1)}{8}\right)\equiv c_4(n) \pmod 2.
		\end{align}
		Employing \eqref{e3.15} in \eqref{e3.17} with $n$ replaced by $p^5n+\frac{7(p^6-1)}{8}$, we observe that if $\kappa(p)\equiv 1\pmod 2$ and $p\nmid n$, then
		\begin{align}\label{e3.18}
			c_4\displaystyle\left(p^{6k+5}n+\frac{7(p^{6k+6}-1)}{8}\right)\equiv0\pmod 2.
		\end{align}
		We observe that if $p\nmid (8n+7)$, then $\displaystyle\left(\frac{2n-\frac{7(p^2-1)}{4}}{p}\right)_L\equiv1\pmod 2$ and
		\begin{align}\label{e3.19}
			c_4\displaystyle\left(\frac{n-\frac{7(p^2-1)}{8}}{p^2}\right)=0.
		\end{align}
		From \eqref{e3.5} and \eqref{e3.19}, we see that if $\kappa(p)\equiv 1\pmod 2$ and $p\nmid(8n+7)$, then
		\begin{align}\label{e3.20}
			c_4\displaystyle\left(p^2n+\dfrac{7(p^2-1)}{8}\right)\equiv0\pmod2.
		\end{align}
		Replacing $n$ by $p^2n+\frac{7(p^4-1)}{8}$ in \eqref{e3.17} and using \eqref{e3.20}, we obtain
		\begin{align}\label{e3.21}
			c_4\displaystyle\left(p^{6k+2}n+\frac{7(p^{6k+2}-1)}{8}\right)\equiv0\pmod 2.
		\end{align}
		From \eqref{e1.1}, \eqref{e8n}, and \eqref{e8n7o}, for $n\ge 0$, $m,\alpha,\beta\ge 1$ with $2\nmid \alpha$, we have
		\begin{align}
			\bar{a}_{2^m\alpha}(8n+7)&\equiv 2^{m+3}c_4(n)\pmod{2^{m+4}}\label{e3.22}\\
			\bar{a}_{2\beta+1}(8n+7)&\equiv 64c_4(n)\pmod{128}.\label{e3.23}
		\end{align}
		Thus, the result follows from \eqref{e3.11}, \eqref{e3.18}, \eqref{e3.21}, \eqref{e3.22} and \eqref{e3.23}.
	\end{proof}
		
	\bigskip
	\bigskip
	
	\noindent
	Department of Mathematics\\
	Ramanujan School of Mathematical Sciences\\
	Pondicherry University\\
	Puducherry- 605 014, India.\\

	\noindent Email: \texttt{tthejithamp@pondiuni.ac.in}
	
	\noindent	Email: \texttt{dr.fathima.sn@pondiuni.ac.in} (\Letter)

\begin{thebibliography}{99}
		\bibitem{andrew}Andrews, G. E., and Bachraoui, M. E. (2025). Two-color partitions with evens in one color. arXiv preprint arXiv:2512.23391.
		\bibitem{corteel}Corteel, S., and Lovejoy, J. (2004). Overpartitions. Transactions of the American Mathematical Society, 356(4), 1623-1635.
		\bibitem{gasper}Gasper, G., and Rahman, M. (2011). Basic hypergeometric series (Vol. 96). Cambridge university press.
		\bibitem{gordon}Gordon, B., and Hughes, K. (2006, October). Ramanujan congruences for q (n). In Analytic Number Theory: Proceedings of a Conference Held at Temple University, Philadelphia, May 12–15, 1980 (pp. 333-359). Berlin, Heidelberg: Springer Berlin Heidelberg.
		\bibitem{poq}Hirschhorn, M. D. (2017). The Power of q. Developments in mathematics, 49.
		\bibitem{hirsch}Hirschhorn, M. D., and Sellers, J. A. (2025). A Family of Congruences Modulo 7 for Partitions with Monochromatic Even Parts and Multi--Colored Odd Parts. arXiv preprint arXiv:2507.09752.
		\bibitem{ligozat}Ligozat, G. (1975). Courbes modulaires de genre 1 (No. 43). Société mathématique de France.
			\bibitem{martin}Martin, Y. (1996). Multiplicative $\eta$-quotients. Transactions of the American Mathematical Society, 348(12), 4825-4856.
		\bibitem{newman53}Newman, M. (1953). The coefficients of certain infinite products. Proceedings of the American Mathematical Society, 4(3), 435-439.
		\bibitem{newman55}Newman, M. (1955). An identity for the coefficients of certain modular forms. Journal of the London Mathematical Society, 1(4), 488-493.
		\bibitem{newmf}Newman, M. (1959). Construction and application of a class of modular functions (II). Proceedings of the London Mathematical Society, 3(3), 373-387.
		\bibitem{newman59}Newman, M. (1959). Modular forms whose coefficients possess multiplicative properties. Annals of Mathematics, 70(3), 478-489.
		\bibitem{newman62}Newman, M. (1962). Modular forms whose coefficients possess multiplicative properties, II. Annals of Mathematics, 75(2), 242-250.
		\bibitem{wom}Ono, K. (2004). The Web of Modularity: Arithmetic of the Coefficients of Modular Forms and $ q $-series: Arithmetic of the Coefficients of Modular Forms and Q-series (No. 102). American Mathematical Soc.
	\end{thebibliography}
\end{document}